\numberwithin{equation}{section}
\theoremstyle{plain}
\newcommand{\A}{\ensuremath{{\mathbb{A}}}}
\newcommand{\C}{\ensuremath{{\mathbb{C}}}}
\newcommand{\Z}{\ensuremath{{\mathbb{Z}}}}
\renewcommand{\P}{\ensuremath{{\mathbb{P}}}}
\newcommand{\Q}{\ensuremath{{\mathbb{Q}}}}
\newcommand{\R}{\ensuremath{{\mathbb{R}}}}
\newcommand{\F}{\ensuremath{{\mathbb{F}}}}
\newcommand{\D}{\ensuremath{{\mathbb{D}}}}
\renewcommand{\H}{\ensuremath{{\mathbb{H}}}}
\newcommand{\N}{\ensuremath{{\mathbb{N}}}}
\newcommand{\charf}{\textbf{1}}
\newcommand{\GL}{\ensuremath{{\text{GL}}}}
\newcommand{\SL}{\ensuremath{{\text{SL}}}}
\newcommand{\Image}{\ensuremath{{\text{Image}}}}
\newtheorem{theo}{Theorem}[section]
\newtheorem{lem}[theo]{Lemma}
\newtheorem{prop}[theo]{Proposition}
\newtheorem{cor}[theo]{Corollary}
\theoremstyle{remark}
\newtheorem{rem}[theo]{Remark}
\theoremstyle{definition}
\newtheorem{defn}[theo]{Definition}
\newtheorem*{cor*}{Corollary}
\newcommand{\zxz}[4]{\begin{pmatrix} #1 & #2 \\ #3 & #4 \end{pmatrix}}
\renewcommand{\Re}{\operatorname{Re}}
\renewcommand{\Im}{\operatorname{Im}}
\newcommand{\Hom}{\operatorname{Hom}}
\title{Triple product formula and mass equidistribution on modular curves of level N}
\begin{document}
\author{Yueke Hu}

\address{Department of Mathematics, University of Wisconsin Madison, Van Vleck Hall, Madison, WI 53706, USA}
\email{yhu@math.wisc.edu}

\begin{abstract}
It was shown in previous works that the measure associated to holomorphic newforms of weight $k$ and level $q$ will tend weakly to the Haar measure on modular curve of level 1, as $qk\rightarrow \infty$.
In this paper we proved that this phenomenon is also true on modular curves of general level $N$. 
\end{abstract}
\maketitle

\section{introduction}
Let $\Gamma_0(N)$ be the standard congruence subgroup of $\SL_2(\Z)$, and let $Y_0(N)=\Gamma_0(N)\backslash \H$ be the corresponding modular curve of level $N$.
Let
\begin{equation}
 d\mu(z)=\frac{dxdy}{y^2}
\end{equation}
be the standard hyperbolic volume measure on $Y_0(N)$.

Let $f:\H\rightarrow \C$ be a  holomorphic newform of weight $k\in 2\N$ and level $q$, where $N|q$. For a bounded continuous test function $\phi$ on $Y_0(N)$, consider the following measure on $Y_0(N)$:
\begin{equation}
 \mu_f(\phi)=\int\limits_{\Gamma_0(q)\backslash\H}\phi(z)|f|^2(z)y^k \frac{dxdy}{y^2}.
\end{equation}
We will show that the measure $\mu_f$  converges weakly to $d\mu$ on $Y_0(N)$ as $qk\rightarrow \infty$. To be more precise, define 
\begin{equation}
 D_f(\phi)=\frac{\mu_f(\phi)}{\mu_f(1)}-\frac{\mu(\phi)}{\mu(1)}.
\end{equation}
\begin{theo}\label{thmmain}
 Let $\phi$ be a fixed bounded continuous function on $Y_0(N)$ and let $f$  traverse a sequence of holomorphic newforms of weight $k$ and level $q$,
 where $k\in 2\N$ and $N|q$. Then 
 \begin{equation}
  D_f(\phi)\rightarrow 0
 \end{equation}
whenever $qk\rightarrow \infty$.
\end{theo}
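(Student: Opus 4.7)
The plan is to follow the standard spectral approach to QUE, suitably adapted to level $N$. Writing $\phi = \mu(\phi)/\mu(1) + \phi_0$ with $\phi_0$ of mean zero against $d\mu$, we have $D_f(\phi) = \mu_f(\phi_0)/\mu_f(1)$, so the theorem reduces to showing $\mu_f(\phi_0) = o(\mu_f(1))$. Since $\phi$ is only bounded continuous, I would first pass to a dense subclass of smooth functions with controlled spectral expansion by an $L^\infty$-approximation argument, combined with a uniform upper bound on $\mu_f$ of the characteristic function of a compact piece of $Y_0(N)$ (obtained from sup-norm control on $y^{k/2}|f|$). One is then left with testing $\mu_f/\mu_f(1)$ against Hecke--Maass cusp forms $g$ on $Y_0(N)$ and against incomplete Eisenstein series on $Y_0(N)$.

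For the cuspidal contribution, the heart of the argument is a triple product formula at level $N$. Passing to the adelic setting and applying an Ichino-type formula to the automorphic representations generated by $f$, $\bar f$, and $g$, one expects to obtain
\[
|\mu_f(g)|^{2} \;=\; C_{\infty}(k)\prod_{p\mid qN} I_{p}\,\cdot\,\frac{L(1/2,\mathrm{Sym}^{2}f\otimes g)\,L(1/2,g)}{L(1,\mathrm{Sym}^{2}f)^{2}\,L(1,\mathrm{ad}\,g)},
\]
with $C_{\infty}(k)$ an explicit archimedean factor and $I_{p}$ local integrals of matrix coefficients at the ramified places. By Rankin--Selberg, $\mu_f(1)$ is essentially $L(1,\mathrm{Sym}^{2}f)$ times an explicit archimedean factor, so after cancellation one bounds $\mu_f(g)/\mu_f(1)$ using the convexity bound for $L(1/2,\mathrm{Sym}^{2}f\otimes g)$ in the $(q,k)$-aspect together with Hoffstein--Lockhart's lower bound $L(1,\mathrm{Sym}^{2}f)\gg_{\epsilon}(qk)^{-\epsilon}$; provided the local integrals $I_{p}$ do not decay faster than a fixed power of $qk$, this yields $\mu_f(g) = o(\mu_f(1))$.

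For the continuous spectrum, I would unfold an incomplete Eisenstein series $E(z\mid\psi)$ on $Y_0(N)$ against $|f|^{2}y^{k}$ to produce a Mellin transform of $\psi$ against the Rankin--Selberg $L$-function $L(s,f\otimes\bar f) = \zeta(s)L(s,\mathrm{Sym}^{2}f)/\zeta(2s)$, with appropriate local modifications at $p\mid q$. Shifting the contour to $\Re s = 1/2$ picks up the residue at $s=1$, which reproduces the main term $\mu(\phi)/\mu(1)\cdot\mu_f(1)$ and thus cancels what is being subtracted in $D_f$; the remaining critical-line integral is controlled by convexity for $L(s,\mathrm{Sym}^{2}f)$ together with the Hoffstein--Lockhart denominator bound.

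The main obstacle is the triple product formula itself. At primes $p\mid q$ (where $f_{p}$ is ramified) and $p\mid N$ (where $g_{p}$ is ramified) the local integrals $I_{p}$ in the Ichino formula must be explicitly evaluated, or at least bounded from below by a slowly decaying function of the local conductors; at level $1$ these local factors are essentially trivial, so the classical Watson / Holowinsky--Soundararajan argument goes through painlessly, but at general level $N$ one must choose test vectors carefully on the ramified local representations and carry out substantial local representation-theoretic computation. This is the new input referenced by the title, and it is where the genuine technical weight of the argument lies.
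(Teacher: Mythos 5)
Your spectral decomposition and your identification of the two constituent problems (triple product integrals against Maass cusp forms, Rankin--Selberg unfolding against incomplete Eisenstein series, with the new technical weight concentrated in the ramified local integrals) match the skeleton of the paper's proof of its first inequality. But there is a genuine gap at the analytic heart of the argument: the claim that the convexity bound for $L(1/2,\mathrm{Sym}^2 f\otimes g)$ together with Hoffstein--Lockhart closes the proof is false, and no choice of test vectors or local computation can rescue it. The finite conductor of $\pi\times\pi\times\pi_\phi$ is $\asymp_\phi C^2k^4$ with $C\leq q^2$, so convexity gives $L(1/2,\Pi)\ll \sqrt{C}\,k\,(Ck)^{\epsilon}$; this growth is exactly cancelled by the decay of the normalized local factors (the ramified non-archimedean integrals are $\asymp p^{-c}$, i.e.\ of size about $1/q$ in aggregate when $C=q^2$, and the archimedean factor is $\asymp k^{-1}$). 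Convexity is therefore the precise break-even point, and one is left with $D_f(\phi)\ll (qk)^{\epsilon}/L(1,\mathrm{Ad}\,f)$, which does not tend to zero --- Hoffstein--Lockhart only gives $L(1,\mathrm{Ad}\,f)\gg_\epsilon (qk)^{-\epsilon}$, and $L(1,\mathrm{Ad}\,f)$ can genuinely be as small as a negative power of $\log(qk)$.

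What the paper actually does (following Holowinsky--Soundararajan and Nelson--Pitale--Saha) is prove two complementary inequalities and take their minimum. The first replaces convexity by Soundararajan's weak subconvexity, yielding
$D_f(\phi)\ll \log(qk)^{\epsilon}(q/\sqrt{C})^{-1+2\alpha+\epsilon}\log(kC)^{-\delta}L(f,\mathrm{Ad},1)^{-1}$,
which decays only when $L(f,\mathrm{Ad},1)$ is not too small. The second is of an entirely different nature: unfolding against a level-one incomplete Eisenstein series and estimating shifted convolution sums of the Fourier coefficients of $f$ at the various cusps by Holowinsky's sieve, one obtains $D_f(\phi)\ll \log(qk)^{\epsilon}q_\Diamond^{\epsilon}\log(qk)^{1/12}L(f,\mathrm{Ad},1)^{1/4}$, which decays precisely when $L(f,\mathrm{Ad},1)$ is small. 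The dichotomy on the size of $L(f,\mathrm{Ad},1)$ is indispensable, and your proposal contains no trace of the second inequality (nor of its level-$N$ complications: bounding Fourier coefficients of Maass forms and Eisenstein series along all cusps of $\Gamma_0(N)$, and the factorizable averages $\lambda_{[c]}(n)$ over cusps of fixed denominator). The same defect affects your treatment of the continuous spectrum, where convexity for $L(1/2+it,\mathrm{Sym}^2 f)$ is again only break-even.
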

This is related to Rudnick and Sarnak's conjecture of quantum unique ergodicity for Laplacian eigenfunctions.
The case $N=q=1,k\rightarrow \infty$ was proved conditionally by Sarnak in \cite{Sa01} and by Luo-Sarnak in \cite{L&S03}. 
Holowinsky and Soundararajan proved this case unconditionally in \cite{Ho10} \cite{H&S10} and \cite{So10}. Their work provided basic framework for subsequent papers. 
Marshall proved the same result for Hilbert modular variety in \cite{Marshall}, that is, to replace rational field $\Q$ by a totally real number field.
Nelson in \cite{Ne11} and Nelson-Pitale-Saha in \cite{PPA14} generalized the work of Holowinsky-Soundararajan and solved the case $N=1, qk\rightarrow \infty$. This paper is a natural successor to these papers and allow general level $N$.

Theorem \ref{thmmain} will follow from the spectrum decomposition result (see Section \ref{subsecofspec} for more details) of square integrable functions on $Y_0(N)$, and the following two inequalities:
\begin{theo}\label{thmoffirstineq}
 Let $\phi$ be a Maass eigencuspform or incomplete Eisenstein series. Then
 \begin{equation}
 D_f(\phi)<<_{\phi,\epsilon}log(qk)^{\epsilon}\frac{(q/\sqrt{C})^{-1+2\alpha+\epsilon}}{log(kC)^\delta L(f, Ad, 1)}.
 \end{equation}
 Here $\alpha \leq 7/64$ is a bound towards the Ramanujan conjecture for $\phi$ at primes dividing $q$, and $\alpha=0$ if $\phi$ is incomplete Eisenstein series. 
  $C$ is the finite conductor of $\pi\times\pi$.
 $\delta=1/2$ or $1$ according as $\phi$ is cuspidal or incomplete Eisenstein series.
\end{theo}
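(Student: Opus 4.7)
The plan is to handle the two cases of the theorem---$\phi$ a Maass eigencuspform and $\phi$ an incomplete Eisenstein series---separately, following the template of Nelson \cite{Ne11} and Nelson--Pitale--Saha \cite{PPA14} but tracking carefully the dependence on the level $N$ and on the finite conductor $C$ of $\pi\times\widetilde\pi$. In both cases the starting point is to view $|f|^2 y^k$ as a matrix coefficient of the automorphic representation $\pi$ of $\mathrm{PGL}_2(\A)$ generated by $f$, and to lift $\mu_f(\phi)$ to a global period integral over $\mathrm{PGL}_2(\Q)\backslash \mathrm{PGL}_2(\A)$.

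For the Maass eigencuspform case, the first step is to apply Ichino's triple product formula, which expresses $|\mu_f(\phi)|^2/\mu_f(1)^2$ as a product of local matrix coefficient integrals $I_v$ times the ratio
\begin{equation*}
\frac{L(1/2,\pi\otimes\pi\otimes\Pi)}{L(1,\mathrm{Ad}\,\pi)^2\,L(1,\mathrm{Ad}\,\Pi)},
\end{equation*}
where $\Pi$ is the representation attached to $\phi$. The Rankin--Selberg factorization $L(s,\pi\otimes\pi\otimes\Pi)=L(s,\mathrm{Ad}\,\pi\otimes\Pi)L(s,\Pi)$ splits the central value into two pieces, and I would bound $L(1/2,\mathrm{Ad}\,\pi\otimes\Pi)$ by convexity (the subconvex piece in the analytic conductor in $k$ is absorbed by the archimedean local integral as in \cite{Ne11}). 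The archimedean local integral, analyzed via Stirling asymptotics, yields the $\log(kC)^{-1/2}$ saving that appears as $\delta=1/2$ in the stated bound, and division by $L(1,\mathrm{Ad}\,\pi)$ produces the $1/L(f,\mathrm{Ad},1)$ factor.

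The main obstacle will be the non-archimedean local integrals $I_p$ at primes $p\mid q$, where $\pi_p$ is ramified. Here one must choose a test vector on the adelic side that matches the newvector of $f$ and then evaluate (or sharply bound) the resulting local triple integral in terms of the conductors of $\pi_p$, $\widetilde\pi_p$, and $\Pi_p$. Because $C\mid q^2$ but may be smaller when the tensor product representation has reduced conductor, the natural quantity measuring this discrepancy is $q/\sqrt{C}$. The decay of the local matrix coefficient at a ramified place is controlled by the Ramanujan exponent $\alpha$ for $\Pi_p$, so combining the local estimates with the convexity bound at the good places produces the factor $(q/\sqrt{C})^{-1+2\alpha+\epsilon}$. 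This computation---a careful case analysis over the possible local components of $\pi_p$ and $\Pi_p$---is where the novelty over \cite{PPA14} lies and is the technical heart of the argument.

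For incomplete Eisenstein series, the plan is instead to unfold the integral against the constant term and to estimate the resulting mean value of $|\lambda_f(n)|^2$ by a Holowinsky-type sieve, adapted to level $q$ with $N\mid q$. Here $\alpha=0$ because the Eisenstein spectrum is tempered, and one gains a full logarithm, accounting for $\delta=1$, because the sieve bounds directly the first moment of $|\lambda_f(n)|^2$ rather than only its square root as in the Cauchy--Schwarz step of the cuspidal case. The two resulting bounds then combine with the spectral decomposition referenced before the theorem to yield Theorem \ref{thmmain}.
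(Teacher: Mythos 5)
Your overall framing of the cuspidal case (adelize, apply Ichino's formula, factor the triple product $L$-function, and fight the hard battle at the ramified non-archimedean places) matches the paper, but there is a fatal substitution at the analytic heart of the argument. You propose to bound $L(1/2,\mathrm{Ad}\,\pi\otimes\Pi)$ by \emph{convexity} and to extract the $\log(kC)^{-1/2}$ saving from Stirling asymptotics of the archimedean local integral. That is not where the logarithm comes from and cannot be: the archimedean integral $I_\infty^0$ contributes only the power-of-$k$ normalization (it is what makes the convexity exponent in $k$ disappear), and convexity for the finite conductor gives $\sqrt{C}\,k^{1+\epsilon}$ with no logarithmic saving at all. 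The factor $\log(kC)^{-\delta}$ in the statement is precisely Soundararajan's \emph{weak subconvexity} bound $L(\pi_\phi\times\pi\times\pi,1/2)\ll \sqrt{C}k/\log(Ck)^{1-\epsilon}$ applied to the degree-eight $L$-function (the paper invokes \cite{So10} for exactly this), and $\delta=1/2$ arises because the period formula controls $|\mu_f(\phi)|^2$, so one takes a square root. Without weak subconvexity your first inequality carries no $\log$ saving, and the interpolation against the second inequality (which is how Theorem \ref{thmmain} is deduced when $L(f,\mathrm{Ad},1)$ is large) collapses.

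The incomplete Eisenstein series case in your proposal is the wrong argument for this theorem: unfolding against the constant term and running a Holowinsky-type sieve on $\sum|\lambda_f(n)|^2$ is the proof of Theorem \ref{thmofsecondineq}, which produces a bound with $L(f,\mathrm{Ad},1)^{1/4}$ in the \emph{numerator}; it cannot yield the shape $\log(kC)^{-1}L(f,\mathrm{Ad},1)^{-1}$ demanded here, and the two bounds are complementary rather than interchangeable. The paper's route is: spectrally decompose $E_\mathfrak{a}(z,\Psi)$ via Mellin inversion to reduce to $\mu_f(E_\mathfrak{a}(z,s))$ on $\Re(s)=1/2$; write the adelization of $E_\mathfrak{a}(z,s)$ as a combination of Eisenstein series attached to induced representations $\mathrm{Ind}_B^{\GL_2}(\chi|\cdot|^{s-1/2},\chi^{-1}|\cdot|^{-s+1/2})$ (Corollary \ref{corofadelization}, which is nontrivial for general level $N$ since a single cusp's Eisenstein series does not come from one induced representation); unfold to a global Rankin--Selberg integral; apply weak subconvexity to $L(f\times f,s)$ on the critical line (here $\delta=1$ because the period is \emph{linear} in the $L$-value, not its square root, and $\alpha=0$ by temperedness of the continuous spectrum); and bound the local Rankin--Selberg integrals at ramified places via Proposition \ref{propRankinSelbergbound}, which again rests on the Whittaker-function analysis of Proposition \ref{propWiofram}. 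Both the missing weak-subconvexity input and the misplaced sieve argument need to be corrected before your outline becomes a proof of the stated inequality.
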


\begin{theo}\label{thmofsecondineq}
  Let $\phi$ be a Maass eigencuspform or incomplete Eisenstein series. Then 
 \begin{equation}
 D_f(\phi)<<_{\phi,\epsilon}log(qk)^{\epsilon}q^\epsilon_\Diamond log(qk)^{1/12} L(f, Ad, 1)^{1/4}.
 \end{equation}
 Here $q_\Diamond$ is the largest integer such that $q_\Diamond^2|q$.
\end{theo}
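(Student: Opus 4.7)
The plan is to follow the sieve-theoretic approach pioneered by Holowinsky and extended by Nelson and Nelson--Pitale--Saha in \cite{Ne11} and \cite{PPA14}, adapting it to general level $N$ and to non-squarefree ramification at primes dividing $q$. The scheme has three logical steps: (i) unfold $\mu_f(\phi)$ into a linear combination of Hecke eigenvalues of $f$ weighted by Fourier data of $\phi$; (ii) extract the normalization factor $L(f, Ad, 1)$ from $\mu_f(1)$ and apply Cauchy--Schwarz; (iii) bound the resulting mean value of $|\lambda_f(n)|$ using the Halász--Montgomery--Tenenbaum theorem for non-negative multiplicative functions.

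First I would expand $\phi$ in its Fourier series at every cusp of $\Gamma_0(N)$ and pair it with the Fourier expansion of $f$ at the corresponding cusp. After integrating in $x$ only diagonal Fourier modes survive, and the $y$-integral produces a smooth weight concentrated at Hecke indices $n \asymp k$, coming essentially from the Mellin transform of $y^{k-1} e^{-4\pi y}$. This yields a Dirichlet sum roughly of the form
$$\mu_f(\phi) \;\approx\; \sum_{n \ge 1} \lambda_f(n)\, c_\phi(n)\, W(n/k),$$
where $c_\phi(n)$ encodes the Fourier coefficients of $\phi$ at the relevant cusp and $W$ is smooth and compactly supported on $(0, \infty)$. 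The ramified local data of $f$ at primes $p \mid q$ contributes an explicit local factor whose total size is at most $q_\Diamond^\epsilon$, since at primes $p \| q$ the representation $\pi_p$ is an unramified twist of Steinberg and the corresponding local factor is bounded.

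Next I would normalize by $\mu_f(1) \asymp L(f, Ad, 1)$ and apply Cauchy--Schwarz. Using the Rankin--Selberg identity $\sum_{n \le X} |\lambda_f(n)|^2 \asymp X\, L(f, Ad, 1)$ together with a suitable interpolation between the $L^1$ and $L^2$ means of $\lambda_f$ produces the specific exponent $L(f, Ad, 1)^{1/4}$ that appears in the statement. The $\log(qk)^{1/12}$ factor then comes from the sieve step: one bounds $\sum_{n \le k} |\lambda_f(n)|/n$ via the Halász--Montgomery--Tenenbaum theorem, exploiting the fact that for a holomorphic newform the average of $|\lambda_f(p)|$ over primes is strictly less than $1$, which yields a saving of $(\log k)^{-1/6 + \epsilon}$; the appropriate power of this saving, combined with the previous Cauchy--Schwarz step, contributes $\log(qk)^{1/12}$.

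The main obstacle, relative to the $N=1$ case of \cite{PPA14}, is the cusp structure of $Y_0(N)$: each cusp of $\Gamma_0(N)$ carries its own Fourier expansion whose normalization must be tracked uniformly under Atkin--Lehner involutions, and at primes $p$ with $p^2 \mid q$ the local Whittaker vector of $\pi_p$ is no longer the unramified new vector. I expect both difficulties to be resolved by an explicit local computation at each bad prime via the classification of generic representations of $\GL_2(\Q_p)$, together with global bookkeeping over cusps; the $q_\Diamond^\epsilon$ loss in the final bound arises precisely from these local contributions.
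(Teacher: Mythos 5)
Your proposal correctly places the argument in the Holowinsky sieve framework, but two of its load-bearing steps are not right as stated. First, $\mu_f(\phi)=\int\phi\,|f|^2y^k\,d\mu$ is quadratic in $f$, so pairing the Fourier expansion of $\phi$ against that of $|f|^2$ and integrating in $x$ does not produce the linear sum $\sum_n\lambda_f(n)c_\phi(n)W(n/k)$ you display; it produces shifted convolution sums $\sum_{n}\lambda_{f,\mathfrak{a}}(n)\overline{\lambda_{f,\mathfrak{a}}(n+l)}$ weighted by the $l$-th Fourier coefficient of $\phi$ at the cusp corresponding to $\mathfrak{a}$. It is these shifted sums, for the cusp-averaged coefficients $\lambda_{[c]}(n)$ of (\ref{defofaveragelambda}), that are estimated by the sieve bound \eqref{5.2} imported from \cite{PPA14} --- and that is where the factor $q_\Diamond^\epsilon$ actually enters, not from local factors of a linear Hecke sum. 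Your displayed formula has the shape of the Rankin--Selberg/triple-product route, i.e.\ of Theorem \ref{thmoffirstineq}, not of Theorem \ref{thmofsecondineq}.

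Second, and more fundamentally, you have omitted the regularization by the \emph{full-level} incomplete Eisenstein series $E(z,h_Y)$ with the free parameter $Y$. For a Maass cusp form $\phi$ there is no direct unfolding of $\mu_f(\phi)$ at all; one must write $Y\mu_f(\phi)=\mu_f(E(z,h_Y)\phi)-\frac{1}{2\pi i}\int_{(1/2)}\hat h(s)Y^s\mu_f(E(z,s)\phi)\,ds$, unfold the first term over the cusps of $\Gamma_0(q)$ (which localizes $y\asymp 1/Y$, truncates the shifts to $0<|l|<Y^{1+\epsilon}$ and the Hecke range to $\max\{m,n\}\ll d_\mathfrak{a}Yk$), and bound the continuous-spectrum term by $O_\phi(Y^{1/2}\mu_f(1))$. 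The exponents $1/12$ and $1/4$ arise precisely from optimizing $Y$ against the sieve saving in \eqref{5.2}; without $Y$ there is nothing to optimize, and ``interpolation between the $L^1$ and $L^2$ means of $\lambda_f$'' does not generate them. Finally, the genuinely new input at level $N$ is not bookkeeping over cusps but quantitative control of the Fourier coefficients of $\phi$ at \emph{every} cusp of $\Gamma_0(N)$: for Maass eigencuspforms this is Corollary \ref{corofboundFouriercuspidal}, and for incomplete Eisenstein series it is Lemma \ref{lemofasymofconstant} and Lemma \ref{lemofboundFourierEis}, which require the adelization of $E_\mathfrak{a}(z,s)$ and the local Whittaker estimates of Proposition \ref{propWiofram} and Lemma \ref{lemofgrowthofWmidlevel}. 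Your proposal presupposes these bounds without indicating how to obtain them.
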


The structure of this paper is organized to prove these two inequalities separately. Section 2 will be about notations and preliminary results. We will prove Theorem \ref{thmoffirstineq} for Maass eigencuspform in Section 3, and for incomplete Eisenstein series in Section 4. Section 5 will be devoted to prove Theorem \ref{thmofsecondineq}.

The idea to prove Theorem \ref{thmoffirstineq} is to adelize the integal $\mu_f(\phi)$ and reduce the problem to Triple product integral in case of Maass eigencuspform, or Rankin-Selberg integral in case of incomplete Eisenstein series. Then Theorem \ref{thmoffirstineq} would follow from Soundararajan's weak subconvexity bound and a reasonable bound for the local integrals. The main innovation of this paper is to control the local integrals for general high ramifications.

In the case of Triple product integral, the corresponding local integral is given by
\begin{equation}
 I_v=\int\limits_{\Q_p^*\backslash \GL_2(\Q_p)}\prod\limits_{i=1}^{3}<\pi_i(g)f_{i},f_{i}>dg.
\end{equation}
Here $f_i$ are the local new forms from local unitary representations $\pi_i$. $<\pi_i(g)f_{i},f_{i}>$ is the corresponding matrix coefficient for $\pi_i$. 

Woodbury in \cite{MW12} and Nelson in \cite{Ne11} computed the local integral for representations with squarefree levels.
In \cite{PPA14}, Nelson, Pitale and Saha computed $I_v$ for higher ramifications, with the assumption that $\pi_1=\pi_2$ and $\pi_3$ is unramified.
Their work is based on Lemma (3.4.2) of \cite{MV10}, which relates $I_v$ to the local Rankin-Selberg integral. But this method can not be generalized to the case when all the representations are supercuspidal, which is necessary for our purpose. 

In \cite{YH14}, we computed the $I_v$ in a more direct way, whenever one of the representations has higher level than the other two. 
The key tool is the descriptions of the Whittaker functional and matrix coefficient for highly ramified representations as developed in \cite{YH13} \cite{YH14}. 
Such results are not quite enough for the purpose of this paper as we have two representations with equally high level.
In this paper, we will further improve these descriptions and prove Proposition \ref{propWiofram} and Proposition \ref{propofsupportofMC}.

These results allow us to give a decent upper bound of $I_v$ for new forms in the cases when  
 $\pi_1$ and $\pi_2$ have the same high level and $\pi_3$ is also ramified. 
We will also use this tool to give an upper bound for local Rankin-Selberg integral in Proposition \ref{propRankinSelbergbound}.

On the other hand, it is relatively easier to generalize the proof in \cite{PPA14} for Theorem \ref{thmofsecondineq} to our case. The main difference is that there are now several cusps for $\Gamma_0(N)$, and one need to bound the Fourier coefficient of $\phi$ along each cusps. This is already done for Maass eigencuspform by Iwaniec in \cite{Iw}. We will deal with the case of Eisenstein series in Section 5.1. We believe such control is probably well-known or expected by experts. But as we didn't find a proper reference, we will give detailed proof in this paper.

I'd like to thank Simon Marshall and Tonghai Yang for helpful discussions and comments.

\section{Notations and preliminary results}
\subsection{Basic Definitions}
Let $\H$ be the upper half plane with the standard hyperbolic volume measure $d\mu=\frac{dxdy}{y^2}$. 
Let $\Delta=y^{-2}(\partial^2_x+\partial^2_y)$ be the hyperbolic Laplacian on $\H$.
Let $\GL_2^+$ be the subgroup of $\GL_2$ with positive determinants.
Then $\GL_2^+$ acts on $\H$ by fractional linear transformations. Let
\begin{equation}
 \Gamma_0(N)=\left\{\gamma\in\SL_2(\Z)|\gamma\equiv \zxz{*}{*}{0}{*}\mod{N}\right\}.
\end{equation}
Let
\begin{equation}
 \Gamma_\infty=\left\{\pm\zxz{1}{n}{0}{1}|n\in\Z\right\}.
\end{equation}
Let $Y_0(N)=\Gamma_0(N)\backslash\H$ be the modular curve of level $N$.

Given a function $f:\H\rightarrow \C$ and $\alpha=\zxz{a}{b}{c}{d}\in\GL_2^+$, we denote $f|_k\alpha$ to be the function
\begin{equation}\label{formulaofslashoperation}
 z\mapsto\det(\alpha)^{k/2}(cz+d)^{-k}f(\alpha z).
\end{equation}

A holomorphic cusp form of weight $k$ and level $q$ is a holomorphic function $f:\H\rightarrow\C$ that satisfies $f|_k\alpha=f$ for all $\alpha\in \Gamma_0(q)$ and vanishes at all cusps of $\Gamma_0(q)$. A holomorphic newform is a cusp form that is an eigenform
of the algebra of Hecke operators and orthogonal to the oldforms. (See \cite{DS05}.) 

A Maass cusp form $\phi$ of level $N$ (and weight 0)  is a $\Gamma_0(N)-$invariant eigenfunction of the hyperbolic Laplacian $\Delta$ on $\H$ that decays rapidly at the cusps of $\Gamma_0(N)$. 
\begin{equation}\label{Maasseigenvalue}
 (\Delta+1/4+r^2)\phi=0,\text{\ \ \ }r\in \R\cup i(-1/2,1/2).
\end{equation}

A Maass eigencuspform is a Maass cusp form which is an eigenfunction of the Hecke operators at all finite places and 
also the involution $T_{-1}:\phi\mapsto[z\mapsto\phi(-\bar{z})]$.

As we will care about asymptotic behaviors, we use the notation
\begin{equation}
 f(x,y)<<_{y}g(x,y)
\end{equation}
to indicate that there exists a positive real function $C(y)$ independent of $x$ such that
\begin{equation}
 |f(x,y)|\leq C(y)|g(x,y)|.
\end{equation}

Further if 
\begin{equation}
 f(x,y)<<_yg(x,y)<<_yf(x,y),
\end{equation}
we will say $f(x,y)\asymp_y g(x,y)$.

We shall also work adelically. In general let $\F$ be a number field. Let $v$ be a place of it and $\varpi_v$ be a local uniformizer at $v$. Let $O_v$ be the ring of integers of the local field $\F_v$.  For an integer $c$, let 
$K_0(\varpi_v^c)\subset\GL_2(O_v)$ be the set of matrices which are congruent to $\zxz{*}{*}{0}{*}\text{mod}{(\varpi_v^c)}$. Similarly let $K_1(\varpi_v^c)$ denote those congruent to $\zxz{*}{*}{0}{1}\text{mod}{(\varpi_v^c)}$.
In this paper we are mostly interested in the case when $\F=\Q$. In that case, we will let $p$ also denote both the place and the local uniformizer. But the arguments in Section 3 and 4 apply to general number field directly.

We will say that a local representation of $\GL_2$ at $p$ is of level $c$ if there is a unique up to constant element which is invariant under $K_1(p^c)$. Note that for representations of trivial central character this is equivalent to the invariance by $K_0(p^c)$.
For an automorphic representation of $\GL_2$ with trivial central character, we will say that it has finite conductor $N$ if its local component at $p$ is of level $e_p$, where $N=\prod\limits_{p|N}p^{e_p}$.

\subsection{Cusps and Fourier expansions}\label{subsecofcusps}
In general for a congruence subgroup $\Gamma\subset\SL_2(\Z)$,
denote $\mathcal{C}(\Gamma)=\Gamma\backslash \Q\cup\{\infty\}$ to be the set of cusps of $\Gamma\backslash\H$. Equivalently, 
\begin{equation}
 \mathcal{C}(\Gamma)=\Gamma\backslash\SL_2(\Z)/\Gamma_\infty.
\end{equation}
This is a finite set.
Fix $\mathfrak{a}\in\mathcal{C}(\Gamma)$, let $\tau_\mathfrak{a}\in\SL_2(\Z)$ be such that
\begin{equation}
 \tau_\mathfrak{a}\infty=\mathfrak{a}.
\end{equation}
Let $\Gamma_\mathfrak{a}$ be the stabilizer of $\mathfrak{a}$ in $\Gamma$. The width of the cusp $\mathfrak{a}$ is defined to be
\begin{equation}
 d_\mathfrak{a}=[\Gamma_\infty:\tau_\mathfrak{a}^{-1}\Gamma_\mathfrak{a}\tau_\mathfrak{a}].
\end{equation}
Define
\begin{equation}
 \sigma_\mathfrak{a}=\tau_\mathfrak{a}\zxz{d_\mathfrak{a}}{0}{0}{1}.
\end{equation}
It satisfies the property that
\begin{equation}
 \sigma_\mathfrak{a}^{-1}\Gamma_\mathfrak{a}\sigma_\mathfrak{a}=\Gamma_\infty.
\end{equation}

We now specify $\Gamma$ to be $\Gamma_0(N)$. As in Section 3.4.1 of \cite{PPA14}, one can consider the transitive right action of $\SL_2(\Z)$ on $\P^1(\Z/N)$:
\begin{equation}
 [x:y]\cdot\zxz{a}{b}{c}{d}=[ax+cy:bx+dy].
\end{equation}
Note that $\Gamma_0(N)$ is the stablizer of $[0:1]$ in $\SL_2(\Z)$. As a result,
the set of cusps $\mathcal{C}(\Gamma_0(N))$ can be parametrized by the set of ordered pairs
\begin{equation}
 \left\{[c:d]:\text{\ \ }c|N,d\in(\Z/(c,N/c))^*\right\}.
\end{equation}
If a cusp $\mathfrak{a}$ corresponds to a pair $[c:d]$ for $c|N$ and $d\in(\Z/(c,N/c))^*$, we will call $c_\mathfrak{a}=c$ the denominator of the cusp $\mathfrak{a}$. 
For a fixed $c|N$, let $\mathcal{C}[c]$ denote the set of cusps whose denominator is $c$.
Then by the above parametrization, 
\begin{equation}
\sharp\mathcal{C}[c]= \varphi((c,N/c)), 
\end{equation}
where $\varphi$ is the Euler totient function.

The width of a cusp can also be given in terms of $c_\mathfrak{a}$:
\begin{equation}
 d_\mathfrak{a}=\frac{[q,c_\mathfrak{a}^2]}{c_\mathfrak{a}^2}.
\end{equation}
Here $[x,y]$ means the least common multiple of $x$ and $y$.

Now let $f$ be a holomorphic new form of weight k and level $q$. Then for any cusp $\mathfrak{a}$, $f$ has Fourier expansion along $\mathfrak{a}$ in the following form:
\begin{equation}\label{formulaofholoFourier}
 f|_k\sigma_\mathfrak{a}(z)=y^{-k/2}\sum\limits_{n\in \N }\frac{\lambda_{f,\mathfrak{a}}(n)}{\sqrt{n}}\kappa_f(ny)e^{2\pi inx},
\end{equation}
where $\kappa_f(y)=y^{k/2}e^{-2\pi y}$ for $y$ positive real and $\lambda_{f,\mathfrak{a}}\in\C$.
From the Fourier expansion and Deligne's bound on the coefficients, one has the uniform control of growth of holomorphic new forms:
\begin{equation}\label{uniformgrowthcontrol}
|f(z)|<<e^{-2\pi y}.
\end{equation}

For a given $c|q$, define
\begin{equation}\label{defofaveragelambda}
 \lambda_{[c]}(n)=\left(\frac{1}{\varphi((c,q/c))}\sum\limits_{\mathfrak{a}\in\mathcal{C}[c]}|\lambda_\mathfrak{a}(n)|^2\right)^{1/2}.
\end{equation}
This average of Fourier coefficients is factorizable in the sense of \cite{PPA14}, and a bound of convolution sums for $\lambda_{[c]}(n)$ was also given there. 

For a Maass eigencuspform $\phi$ of level N and a fixed cusp $\mathfrak{a}$, there is a similar Fourier expansion
\begin{equation}\label{MaassFourierexpan}
 \phi(\sigma_\mathfrak{a} z)=\sum\limits_{n\neq 0}\frac{\lambda_{\phi,\mathfrak{a}}(n)}{\sqrt{n}}\kappa_{ir}(ny)e^{2\pi inx},
\end{equation}
where $\kappa_{ir}(y)=2y^{1/2}K_{ir}(2\pi y)$ with $K_{ir}$ being the standard K-Bessel function and $r$ is as in (\ref{Maasseigenvalue}). We have $|\kappa_{ir}(y)|\leq 1$ for all $s\in \R\cup i(-1/2,1/2)$ and all $y\in \R^+$.

As a corollary of Theorem 3.2 of \cite{Iw}, we have the following result 
\begin{cor}\label{corofboundFouriercuspidal}
For a Maass eigencuspform $\phi$ of level $N$ and any cusp $\mathfrak{a}$ as above, we have
\begin{equation}
\sum\limits_{|n|\leq M}|\lambda_{\phi,\mathfrak{a}}(n)|^2<<_{\phi}M.
\end{equation}
Using Cauchy-Schwartz inequality and that $\mathcal{C}(\Gamma)$ is finite,
\begin{equation}
\sum\limits_{|n|\leq M}\sum\limits_{\mathfrak{a}\in\mathcal{C}(\Gamma)}\frac{|\lambda_{\phi,\mathfrak{a}}(n)|}{\sqrt{|n|}}<<_{\phi,\epsilon}M^{1/2+\epsilon}.
\end{equation}
\end{cor}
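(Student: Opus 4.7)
The plan is to directly invoke Theorem 3.2 of \cite{Iw} at each individual cusp $\mathfrak{a}$ for the first bound, and then deduce the second bound by a routine Cauchy--Schwartz argument followed by a summation over the finite set of cusps $\mathcal{C}(\Gamma_0(N))$. Since $\phi$ is fixed, we are allowed to absorb all dependence on the Laplace eigenvalue $r$, the level $N$, and the $L^2$-norm of $\phi$ into the implicit constants; this is what ultimately makes the argument essentially a bookkeeping exercise.

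For the first inequality, I would apply Iwaniec's Theorem 3.2 to $\phi$ at the cusp $\mathfrak{a}$. That theorem gives a Rankin--Selberg type $L^2$ bound of the shape
\[
\sum_{|n|\leq M} |\lambda_{\phi,\mathfrak{a}}(n)|^2 \ll (M + r^2)\,\|\phi\|_2^2,
\]
once one matches conventions: Iwaniec's result is stated in terms of the normalized Fourier coefficients at $\mathfrak{a}$ (paired with a Whittaker/$K$-Bessel function), and in the expansion (\ref{MaassFourierexpan}) the coefficient of $\kappa_{ir}(|n|y)e^{2\pi inx}$ is $\lambda_{\phi,\mathfrak{a}}(n)/\sqrt{|n|}$, which agrees with Iwaniec's normalization up to a constant depending only on $r$. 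With $\phi$ fixed, this collapses to $\ll_\phi M$, as required.

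For the second inequality, I would fix a cusp $\mathfrak{a}$ and split the summand as $|\lambda_{\phi,\mathfrak{a}}(n)|\cdot|n|^{-1/2}$, then apply Cauchy--Schwartz:
\[
\sum_{|n|\leq M} \frac{|\lambda_{\phi,\mathfrak{a}}(n)|}{\sqrt{|n|}} \leq \left(\sum_{|n|\leq M} |\lambda_{\phi,\mathfrak{a}}(n)|^2\right)^{1/2}\left(\sum_{|n|\leq M}\frac{1}{|n|}\right)^{1/2} \ll_{\phi} M^{1/2}(\log M)^{1/2} \ll_{\phi,\epsilon} M^{1/2+\epsilon}.
\]
Finally, summing over $\mathfrak{a}\in\mathcal{C}(\Gamma_0(N))$ merely multiplies by $\sharp\mathcal{C}(\Gamma_0(N))$, which is finite and depends only on $N$ (hence on $\phi$), so it is absorbed into the implicit constant.

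The only step that requires any real care is the normalization bookkeeping in applying Theorem 3.2 of \cite{Iw}: Iwaniec's version is typically phrased with Fourier coefficients $\rho_\mathfrak{a}(n)$ paired with a specific Whittaker function normalization, and one has to verify that the conversion to our $\lambda_{\phi,\mathfrak{a}}(n)/\sqrt{|n|}$ convention introduces only factors that depend on $\phi$ alone. I do not foresee any genuine obstacle beyond this; after the normalization is matched, both inequalities are immediate.
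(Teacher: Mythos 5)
Your proposal is correct and follows exactly the route the paper intends: the first bound is a direct application of Theorem 3.2 of \cite{Iw} at the fixed cusp $\mathfrak{a}$ (with the normalization matching you describe), and the second follows by Cauchy--Schwartz together with the finiteness of $\mathcal{C}(\Gamma_0(N))$. The paper gives no further detail beyond this, so your write-up is, if anything, more careful than the original.
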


\subsection{Eisenstein series and spectral theory of modular curve of level N}\label{subsecofspec}

For a compactly supported test function $h$ on $\R^+$ and a cusp $\mathfrak{a}$, the associated incomplete Eisenstein series for $\Gamma$ is defined to be
\begin{equation}
E_\mathfrak{a}(z,h)=\sum\limits_{\gamma\in \Gamma_\mathfrak{a}\backslash\Gamma}h(\Im (\sigma_{\mathfrak{a}}^{-1}\gamma z)).
\end{equation}
The Eisenstein series for $\Gamma$ along cusp $\mathfrak{a}$  is defined to be
\begin{equation}
E_\mathfrak{a}(z,s)=\sum\limits_{\gamma\in \Gamma_\mathfrak{a}\backslash\Gamma}\Im (\sigma_{\mathfrak{a}}^{-1}\gamma z)^s.
\end{equation}


\begin{theo}\label{thmofpole}
Let $\Gamma$ be a congruence subgroup and $\Re(s)\geq 1/2$. Then $E_\mathfrak{a}(z,s)$ has a unique pole at $s=1$ with residue
\begin{equation}
\text{res}_{s=1}E_\mathfrak{a}(z,s)=Vol(\Gamma\backslash\H)^{-1}.
\end{equation}
\end{theo}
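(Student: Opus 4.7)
The plan is to carry out the classical three-step analysis of non-holomorphic Eisenstein series on $\Gamma\backslash\H$.

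First, I would invoke the meromorphic continuation of $E_\mathfrak{a}(z,s)$ to all of $\C$ (due to Selberg and Langlands) together with the classical analysis of its pole structure on the half-plane $\Re(s)\geq 1/2$: any such pole must be simple and lie on the real segment $(1/2,1]$, corresponding to a non-positive $L^2$ eigenvalue $s(1-s)$ of $\Delta$. For the Eisenstein series specifically, the only pole in this region occurs at $s=1$, coming from the trivial eigenvalue $0$, i.e., from the constants.

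Next, I would show that $R(z):=\text{res}_{s=1}E_\mathfrak{a}(z,s)$ is a constant function. It is manifestly $\Gamma$-invariant, and differentiating the eigenequation $(\Delta+s(1-s))E_\mathfrak{a}(\cdot,s)=0$ at $s=1$ yields $\Delta R=0$. From the constant-term expansion
\[
\int_0^1 E_\mathfrak{a}(\sigma_\mathfrak{b}(x+iy),s)\,dx = \delta_{\mathfrak{a}\mathfrak{b}}y^s + \phi_{\mathfrak{a}\mathfrak{b}}(s)y^{1-s}
\]
along each cusp $\mathfrak{b}$, writing $\phi_{\mathfrak{a}\mathfrak{b}}(s)=r_\mathfrak{b}/(s-1)+O(1)$ near $s=1$, the residue of the constant term is the $y$-independent quantity $r_\mathfrak{b}$. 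Since the non-constant Fourier coefficients of $E_\mathfrak{a}(z,s)$ are analytic at $s=1$, $R$ is bounded on $\Gamma\backslash\H$, and a bounded $\Gamma$-invariant harmonic function on a finite-volume hyperbolic surface must be constant: $R\equiv c$ for some $c\in\C$.

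Finally, to identify $c=\Vol(\Gamma\backslash\H)^{-1}$, I would apply the Maass-Selberg inner product formula to the truncated Eisenstein series $\Lambda^T E_\mathfrak{a}(\cdot,s)$ and take the double residue at $s=s'=1$: the left side yields $\|R\|_2^2=|c|^2\Vol(\Gamma\backslash\H)$, while the right side --- dominated by the scattering piece $\phi_{\mathfrak{a}\mathfrak{b}}(s)\overline{\phi_{\mathfrak{a}\mathfrak{b}}(s')}/(1-s-\overline{s'})$ and using $r_\mathfrak{b}=c$ from the previous step --- produces an expression linear in $|c|^2$ that, when matched against the left side, forces $c=\Vol(\Gamma\backslash\H)^{-1}$.

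The hardest part is this last step. The naive unfolding $\int_{\Gamma\backslash\H}E_\mathfrak{a}(z,s)\,d\mu$ diverges (the would-be Mellin integral $\int_0^\infty y^{s-2}\,dy$ is ill-defined), so the constant has to be pinned down via truncation, requiring the simultaneous handling of all cusps via the scattering matrix. Alternatively, one can invoke the spectral decomposition of $L^2(\Gamma\backslash\H)$ directly: constants form the $\Delta$-eigenspace at eigenvalue $0$ with $L^2$-unit vector $\Vol(\Gamma\backslash\H)^{-1/2}$, and the required proportionality of $R$ against this basis vector forces the claimed normalization.
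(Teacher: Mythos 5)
The paper itself does not prove this theorem; it is quoted as a standard fact (implicitly from Iwaniec's book), and your outline is precisely the classical Selberg--Iwaniec argument: meromorphic continuation, simplicity and location of poles in $\Re(s)>1/2$, identification of the residue as a bounded $\Gamma$-invariant harmonic (hence constant) function, and the Maass--Selberg relation to fix the constant. Two points in your final step need repair, however. First, you describe the surviving right-hand side of the double-residue computation as ``linear in $|c|^2$''; if both sides were proportional to $|c|^2$, the resulting identity would say nothing about $c$. What actually survives the double residue and the limit $T\to\infty$ is the residue $r_{\mathfrak a}$ of $\varphi_{\mathfrak{aa}}(s)$ at $s=1$, which by your second step equals $c$ itself, so the correct identity is $|c|^2\,Vol(\Gamma\backslash\H)=c$; it is exactly the mismatch of degrees between the two sides that pins $c$ down. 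Second, that identity still admits the solution $c=0$, so the existence of the pole must be verified independently --- for instance by observing that $\varphi_{\mathfrak{aa}}(s)=\sqrt{\pi}\,\tfrac{\Gamma(s-1/2)}{\Gamma(s)}\sum_{c>0}S_{\mathfrak{aa}}(0,0;c)\,c^{-2s}$ is a Dirichlet series with nonnegative coefficients whose partial sums grow linearly, forcing a pole with positive residue at $s=1$, or by pairing $E_{\mathfrak a}(\cdot,s)$ against an incomplete Eisenstein series and unfolding.

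One further caveat: your claim that $s=1$ is the \emph{only} pole in $\Re(s)\ge 1/2$ is exactly where the hypothesis that $\Gamma$ is a congruence subgroup enters (via the expression of the scattering matrix in terms of completed Dirichlet $L$-functions, equivalently the fact that the residual spectrum consists only of the constants); for a general finite-covolume Fuchsian group there may be additional poles in $(1/2,1)$. This is standard but worth flagging, since you invoke it without comment.
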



According to \cite{Iw}, the space of square integrable functions on $\Gamma\backslash\H$ is spanned by the space of Maass cuspforms and the space of incomplete Eisenstein series; The latter can be further decomposed into residuals of Eisenstein series and direct integral of Eisenstein series at $\Re(s)=1/2$.

For incomplete Eisenstein series $E_\mathfrak{a}(z,h)$, one can get its spectrum decomposition by Mellin inversion formula. Let $\hat{h}(s)=\int\limits_{0}^{\infty}h(y)y^{-s-1}dy$ be the Mellin transform of $h$. It satisfies the growth control
\begin{equation}
\hat{h}(s)<<_{h,A}(1+|s|)^{-A}
\end{equation}
for any positive $A$.
Then the Mellin inversion formula claims that
\begin{equation}
h(y)=\frac{1}{2\pi i}\int\limits_{(2)}\hat{h}(s)y^s ds,
\end{equation}
where $\int\limits_{(2)}$ denotes the integral taken over the vertical contour from $2-i\infty$ to $2+i\infty$. Then by summing over $\sigma_\mathfrak{a}^{-1}\gamma$ translates for $\gamma\in \Gamma_\mathfrak{a}\backslash\Gamma$, we get
\begin{equation}
E_\mathfrak{a}(z,h)=\frac{1}{2\pi i}\int\limits_{(2)}\hat{h}(s)E_\mathfrak{a}(z,s)ds.
\end{equation}
Now move the integration to the line $\Re(s)=1/2$. By Cauchy's theorem, we have
\begin{equation}\label{spectrumdecompofincompleteEis}
E_\mathfrak{a}(z,h)=\frac{\hat{h}(1)}{Vol(\Gamma\backslash\H)}+
\frac{1}{2\pi i}\int\limits_{(1/2)}\hat{h}(s)E_\mathfrak{a}(z,s)ds.
\end{equation}
By a change of variable, we have in general for another cusp $\mathfrak{b}$,
\begin{equation}
E_\mathfrak{a}(\sigma_\mathfrak{b} z,h)=\frac{\hat{h}(1)}{Vol(\Gamma\backslash\H)}+
\frac{1}{2\pi i}\int\limits_{(1/2)}\hat{h}(s)E_\mathfrak{a}(\sigma_\mathfrak{b}z,s)ds.
\end{equation}
For the standard Eisenstein series $E(z,s)$, we have its Fourier expansion
\begin{equation}\label{FourierforE}
E(z,s)=y^s+M(s)y^{1-s}+\frac{1}{\xi(2s)}\sum\limits_{n\neq 0}\frac{\lambda_{s-1/2}}{\sqrt{|n|}}\kappa_{s-1/2}(ny)e^{2\pi i nx}.
\end{equation}
Here $\lambda_{s-1/2}(n)=\sum\limits_{ab=n}(a/b)^{s-1/2}$; $\kappa_{s-1/2}(y)=2|y|^{1/2}K_{s-1/2}(2\pi|y|)$ for $K$ the standard K-Bessel function; $M(s)=\xi(2s-1)/\xi(2s)$ and $\xi(s)=\pi^{-s/2}\Gamma(s/2)\zeta(s)$ is the completed Riemann zeta function.

In general let $\mathfrak{a}$, $\mathfrak{b}$ be two cusps for $\Gamma_0(N)$. Then by Theorem 3.4 of \cite{Iw},
\begin{equation}\label{formulaofclassicalFourier}
 E_\mathfrak{a}(\sigma_{\mathfrak{b}}z,s)=\delta_{\mathfrak{ab}}y^s+\varphi_\mathfrak{ab}(s)y^{1-s}+\sum\limits_{n\neq 0}\varphi_\mathfrak{ab}(n,s)\kappa_{s-1/2}(ny)e^{2\pi i nx},
\end{equation}
where $\delta_\mathfrak{ab}$ is the Kronecker symbol, $\varphi_\mathfrak{ab}(s)$ and $\varphi_\mathfrak{ab}(n,s)$ are defined using generalized Kloosterman sum.

Now let $\phi=E_\mathfrak{a}(\sigma_\mathfrak{b} z,h)$ be an incomplete Eisenstein series. Using its spectrum decomposition and the Fourier expansion above, we have
\begin{equation}
\phi(z)=\sum\limits_{n\in\Z}\phi_n(y)e^{2\pi i nx},
\end{equation}
where
\begin{equation}
\phi_n(y)=\frac{1}{2\pi i}\int\limits_{(1/2)}\hat{h}(s)\varphi_\mathfrak{ab}(n,s)\kappa_{s-1/2}(ny)ds,
\end{equation}
and
\begin{equation}\label{formulaEisconstantterm}
\phi_0(y)=\frac{\hat{h}(1)}{Vol(\Gamma\backslash\H)}+\frac{1}{2\pi i}\int\limits_{(1/2)}\hat{h}(s)(\delta_{\mathfrak{ab}}y^s+\varphi_\mathfrak{ab}(s)y^{1-s})ds.
\end{equation}

\subsection{Associate classical modular forms to automorphic forms}\label{subsecofAdelization}
Let $\phi$ be a modular form of weight $k$ and level $N$. We can associate to it an automorphic form $\tilde{\phi}$ as follows. By strong approximation,
\begin{equation}
\GL_2(\A)=\GL_2(\Q)\GL_2^+(\R)\prod\limits_{p}K_0(p^{e_p}).
\end{equation}
For any element $g\in\GL_2(\A)$, we can then write it as $$g=hg_\infty k,$$ where $h\in \GL_2(\Q)$, $g_\infty \in \GL_2^+(\R)$ and $k\in \prod\limits_{p}K_0(p^{e_p})$. Then define $\tilde{\phi}:\GL_2(\A)\rightarrow \C$ as
\begin{equation}
\tilde{\phi}(g)=\phi|_{g_\infty}(i),
\end{equation}
where $\phi|_{g_\infty}$ is as in (\ref{formulaofslashoperation}).

This automorphic form $\tilde{\phi}$ is called the adelization of $\phi$. It is clearly invariant under $\prod\limits_{p}K_0(p^{e_p})$, and its infinity component is weight k. We won't distinguish $\phi$ and $\tilde{\phi}$ later on when there is no confusion.

As an example, consider the Eisenstein series $E(z,s)$ of weight 0 and level 1. Its adelization is exactly 
\begin{equation}
E(g,s)=\sum\limits_{\gamma\in B(\Q)\backslash\GL_2(\Q)}\Phi_s(\gamma g),
\end{equation}
where $\Phi_s\in \text{Ind}_B^{\GL_2}(|\cdot|^{s-1/2},|\cdot|^{-s+1/2})$ satisfies
\begin{equation}
\Phi_s(\zxz{a}{b}{0}{d}g)=|a|^s|d|^{-s}\Phi_s(g).
\end{equation}
One can further translate the Fourier expansion of $E(z,s)$ into adelic languages.


Let $E(g,s)$ be the adelization of $E(z,s)$. For $z=x+iy$, let
\begin{equation}
g_z=(\zxz{y}{x}{0}{1},1,1,\cdots)\in\GL_2(\A),
\end{equation}
where the first component is the component at infinity.
So $E(z,s)$ can be recovered as 
\begin{equation}
E(z,s)=E(g_z,s).
\end{equation}
For Eisenstein series $E(g,s)$, we have its Fourier expansion
\begin{equation}\label{AdelicFourierforEis}
E(g_z,s)=\sum\limits_{a\in\Q}\int\limits_{\Q\backslash\A}E(\zxz{1}{x}{0}{1}g_z,s)\psi(-ax)dx.
\end{equation}
When $a=0$, the corresponding integral gives the constant term for the Fourier expansion. A standard unfolding technique would give (see \cite{Bump})
\begin{equation}\label{formulaofadelicEisconstantterm}
\int\limits_{\Q\backslash\A}E(\zxz{1}{x}{0}{1}g_z,s)dx=\Phi_s(g_z)+\int\limits_{\A}\Phi_s(\omega\zxz{1}{x}{0}{1}g_z)dx.
\end{equation}
Note that $\Phi_s(g_z)=y^s$, and the second integral gives $M(s)y^{1-s}$.

When $a\neq 0$, one can similarly get
\begin{equation}
\int\limits_{\Q\backslash\A}E(\zxz{1}{x}{0}{1}g,s)\psi(-ax)dx=W(\zxz{a}{0}{0}{1}g).
\end{equation}
Here $W$ is the global Whittaker functional associated to the Eisenstein series, defined as
\begin{equation}\label{formula2.4.1}
W(g)=\int\limits_{\A}\Phi_s(\omega\zxz{1}{x}{0}{1}g)\psi(-x)dx,
\end{equation}
where $\omega=\zxz{0}{1}{-1}{0}$.
The integral (\ref{formula2.4.1}) is directly a product of local integrals, which in turn are exactly the local Whittaker functional of induced representations.
We write 
\begin{equation}
W(g)=\prod\limits_{v}W_v(g),
\end{equation}
and we can normalize the local Whittaker functionals so that
\begin{equation}
W_\infty(\zxz{a}{0}{0}{1}g_z)= \frac{1}{\xi(2s)} \kappa_{s-1/2}(a y)e^{2\pi i a x},
\end{equation}
and
\begin{equation}
W_p(1)=1
\end{equation}
for all finite prime $p$.

If $\Phi_s$ is spherical at all finite places, then $W_p(\zxz{a}{0}{0}{1})$ is not zero only if $a$ is locally integral. As a result, the summation in the Fourier expansion (\ref{AdelicFourierforEis}) is actually only for integers. Then comparing (\ref{AdelicFourierforEis}) with (\ref{FourierforE}), we get
\begin{equation}
\lambda_{s-1/2}(n)=|n|^{1/2}\prod\limits_{p|n}W_p(\zxz{n}{0}{0}{1}).
\end{equation}
Under this identification, the fact that
\begin{equation}
\lambda_{s-1/2}(n)=\sum\limits_{ab=n}(\frac{a}{b})^{s-1/2}
\end{equation}
would follow directly from the well-known formula of the Whittaker function for spherical elements:
\begin{equation}\label{formulasphericalWhittaker}
W_p(\zxz{a}{0}{0}{1})=|a|^{1/2}\frac{p^{(v(a)+1)(s-1/2)}-p^{-(v(a)+1)(s-1/2)}}{p^{(s-1/2)}-p^{-(s-1/2)}},
\end{equation}
where $W_p$ is the Whittaker functional associated to the spherical element of $\text{Ind}_B^{\GL_2}(|\cdot|^{s-1/2},|\cdot|^{-s+1/2})$.

\subsection{Integrals at non-archimedean places}
Let $\psi$ an additive character of $\A$. Without loss of generality, we will always assume that $\psi$ is unramified at any finite place.
Let $O_v$ be the local ring of integers of $\F_v$. Let $p=|\varpi_v|_v^{-1}$. It can be a power of a prime.
\begin{lem}\label{lemofGaussint}
Let $m\in\F_v$ such that $v(m)=-j<0$, and $\nu$ be a character of $O_{v}^*$ of level $k>0$. Then
\begin{equation}
  |\int\limits_{v(x)=0}\psi_v(mx)\nu^{-1}(x)d^*x|=\begin{cases}
                                                   \sqrt{\frac{p}{(p-1)^2p^{k-1}}},&\text{\ if\ }j=k;\\
                                                   0,&\text{\ otherwise.}
                                                  \end{cases}
\end{equation}
\end{lem}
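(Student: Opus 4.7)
The plan is to reduce the integral to a finite character sum modulo $\varpi_v^N$ with $N=\max(j,k)$, then dispose of the two unequal cases by orthogonality and handle the equal case via a classical Gauss sum.

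First I would normalize the multiplicative Haar measure so that $\mathrm{Vol}(O_v^*)=1$; then $\mathrm{Vol}(1+\varpi_v^N O_v)=\frac{1}{(p-1)p^{N-1}}$. Since $\psi_v$ is unramified, the factor $\psi_v(mx)$ depends on $x$ only modulo $\varpi_v^j$; since $\nu$ has level $k$, the factor $\nu^{-1}(x)$ depends on $x$ only modulo $\varpi_v^k$. Hence, setting $N=\max(j,k)$,
\begin{equation*}
\int_{v(x)=0}\psi_v(mx)\nu^{-1}(x)\,d^*x
=\frac{1}{(p-1)p^{N-1}}\sum_{x\in(O_v/\varpi_v^N)^*}\psi_v(mx)\nu^{-1}(x).
\end{equation*}

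Next I split into cases. If $j<k$, so $N=k$, I write each $x\in(O_v/\varpi_v^k)^*$ uniquely as $x=x_0(1+\varpi_v^j y)$ with $x_0$ running over $(O_v/\varpi_v^j)^*$ and $y$ over $O_v/\varpi_v^{k-j}$. Then $\psi_v(mx)=\psi_v(mx_0)$ because $v(mx_0\varpi_v^j y)\ge 0$, and the inner sum becomes $\sum_y \nu^{-1}(1+\varpi_v^j y)$, a character sum over $(1+\varpi_v^j O_v)/(1+\varpi_v^k O_v)$. Since $\nu$ is nontrivial on $1+\varpi_v^{k-1}O_v\subset 1+\varpi_v^j O_v$, orthogonality of characters forces this to vanish. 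The case $j>k$ is symmetric: write $x=x_0(1+\varpi_v^k y)$, so $\nu^{-1}(x)=\nu^{-1}(x_0)$, and the inner sum is $\sum_y \psi_v(mx_0\varpi_v^k y)$, a nontrivial additive character sum on $O_v/\varpi_v^{j-k}$ since $v(mx_0\varpi_v^k)=k-j<0$, hence zero.

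The remaining case $j=k$ reduces to a standard Gauss sum. Writing $m=u\varpi_v^{-k}$ with $u\in O_v^*$ and substituting $x\mapsto u^{-1}x$, the sum equals $\nu(u)\cdot g(\nu^{-1})$, where
\begin{equation*}
g(\nu^{-1})=\sum_{x\in(O_v/\varpi_v^k)^*}\nu^{-1}(x)\,\psi_v(\varpi_v^{-k}x).
\end{equation*}
I would then verify the classical identity $|g(\nu^{-1})|=p^{k/2}$ by computing $|g(\nu^{-1})|^2$: expanding as a double sum, substituting $x=yz$, and using that $\nu^{-1}$ is primitive of conductor $\varpi_v^k$ to collapse the inner sum over $z$ via orthogonality, leaving a single term of size $p^k$. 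Plugging $|g(\nu^{-1})|=p^{k/2}$ into the measure factor gives
\begin{equation*}
\frac{p^{k/2}}{(p-1)p^{k-1}}=\sqrt{\frac{p}{(p-1)^2 p^{k-1}}},
\end{equation*}
as claimed.

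The only step with any real content is the Gauss-sum absolute value computation in the $j=k$ case; the other two cases are pure orthogonality. The bookkeeping around which quantity ($\psi_v$ versus $\nu^{-1}$) forces the vanishing is the main thing to track, and the normalization of $d^*x$ must be kept consistent throughout so that the explicit constant $\sqrt{p/((p-1)^2 p^{k-1})}$ comes out correctly.
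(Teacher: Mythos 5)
Your proof is correct and is exactly the argument the paper has in mind: the paper offers no proof beyond the remark that the lemma is ``a variant of the classical result on Gauss sum,'' and your reduction to a character sum mod $\varpi_v^{\max(j,k)}$, the two orthogonality vanishing cases, and the $|g(\nu^{-1})|=p^{k/2}$ computation are precisely the standard details being invoked. The normalization bookkeeping also checks out, since $p^{k/2}/((p-1)p^{k-1})=\sqrt{p/((p-1)^2p^{k-1})}$.
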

This is just a variant of the classical result on Gauss sum.

\begin{lem}\label{lemoflocalintofcharacters}
Let $\mu$ and $\nu$ both be multiplicative characters of $O_v^*$ and $j\in \Z$. Suppose $\mu$ is of level $i>0$. 
\begin{enumerate}
\item If $0<j\leq i-2$, then
\begin{equation*}
\int\limits_{v(x)=0}\mu(1+\varpi^j x)\nu(x)d^*x
\end{equation*}
is not zero only if $\nu$ is of level $i-j$. 
If $j=i-1$, then the above integral
is not zero only if $\nu$ is of level 0 or 1, and 
\begin{equation*}
\int\limits_{v(x)=0}\mu(1+ \varpi^{i-1} x)d^*x=-\frac{1}{p-1},
\end{equation*}
\item When $i>1$,
\begin{equation*}
\int\limits_{v(x)=0, x\notin -1+\varpi O_v }\mu(1+ x)\nu(x)d^*x
\end{equation*}
is not zero only if $\nu$ is of level $i$. When $i=1$, $\nu$ could  be of level 0 or 1, and
\begin{equation*}
\int\limits_{v(x)=0, x\notin -1+\varpi O_v }\mu(1+ x)d^*x=-\frac{1}{p-1},
\end{equation*}
\item
When $j<0$, 
\begin{equation*}
\int\limits_{v(x)=0}\mu(1+\varpi^j x)\nu(x)d^*x
\end{equation*}
is not zero only if $\nu$ is of level $i$.
\end{enumerate}

\end{lem}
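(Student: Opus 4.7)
The common engine is a Haar-orthogonality argument on $O_v^*$. Call any one of the three integrals $I$, and for a chosen integer $m\ge 1$ perform the multiplicative substitution $x\mapsto xu$ with $u\in 1+\varpi^m O_v$. Since $d^*x$ is translation invariant, this produces the identity
\[
I \;=\; \nu(u)\int\limits_{v(x)=0} \mu(1+\varpi^j xu)\,\nu(x)\, d^*x
\]
for every such $u$. Averaging over $u$ reduces the problem to analyzing an inner integral in which $\nu$ is tested against $1+\varpi^m O_v$; picking $m$ correctly will either show the inner integral vanishes or pin down the level of $\nu$.

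For part~(1), with $0<j\le i-2$, I would factor
\[
1+\varpi^j xu \;=\; (1+\varpi^j x)\bigl(1+\varpi^{j+m}xy\,(1+\varpi^j x)^{-1}\bigr)
\]
for $u=1+\varpi^m y$. Taking $m=i-j$, the second factor lies in $1+\varpi^i O_v$ and is annihilated by $\mu$, so $I=\nu(u)I$ for all $u\in 1+\varpi^{i-j}O_v$, forcing the level of $\nu$ to be $\le i-j$. Taking $m=i-j-1$ instead, the second factor lies in $1+\varpi^{i-1}O_v$, where $\mu$ restricts to a nontrivial additive character $\tilde\psi$ of the residue field; the inner integral then reduces to $\int \tilde\psi(\bar x\bar y)\,dy$, a Gauss-type integral that vanishes because $\bar x\ne 0$. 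Consequently $\nu$ of level $\le i-j-1$ forces $I=0$, pinning $\nu$ to level exactly $i-j$. The endpoint $j=i-1$ only permits $m=1$, hence only the upper bound is available; for $\nu$ trivial I would decompose $O_v^*=\sqcup_{x_0\in(O_v/\varpi)^*}(x_0+\varpi O_v)$, use $\mu(1+\varpi^{i-1}x)=\mu(1+\varpi^{i-1}x_0)$ on each coset, and finish with $\sum_{x_0\in\F_p^*}\tilde\psi(x_0)=-1$.

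Part~(2) follows the same template with $j=0$. The exclusion $x\notin -1+\varpi O_v$ is exactly what keeps $1+x$ a unit so the factorization $1+xu=(1+x)(1+\varpi^m xy(1+x)^{-1})$ remains valid, and the new input for the lower-bound step (available when $i>1$) is that $\bar x/(1+\bar x)$ is a nonzero residue class, enough to run the same Gauss-integral cancellation. For $i=1$ the lower bound collapses and $\nu$ is only constrained to level $0$ or $1$; the explicit value for $\nu$ trivial comes from the coset decomposition, the sole wrinkle being that the coset $x_0=-1$ is omitted, yielding $\frac{1}{p-1}\bigl(\sum_{y_0\in\F_p^*}\mu(y_0)-\mu(1)\bigr)=-\frac{1}{p-1}$.

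For part~(3) with $j=-i'<0$, I would factor $1+\varpi^j x=\varpi^j x(1+\varpi^{i'}x^{-1})$ and apply the inversion $x\mapsto x^{-1}$ on $O_v^*$, rewriting the integral as
\[
I \;=\; \mu(\varpi)^{-i'}\int (\mu\nu)^{-1}(y)\,\mu(1+\varpi^{i'}y)\, d^*y.
\]
When $i'\ge i$ the factor $\mu(1+\varpi^{i'}y)$ is identically $1$ and plain orthogonality forces $\mu\nu=1$; when $0<i'<i$ one applies part~(1), which forces $(\mu\nu)^{-1}$ of level $\le i-i'=i+j<i$. Since $\mu$ itself has level exactly $i$, the only way $\mu\nu$ can have level strictly less than $i$ is for $\nu$ to have level exactly $i$ and cancel the leading level of $\mu$, giving the stated conclusion. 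The main obstacle throughout is the careful bookkeeping around the endpoint cases $j=i-1$ in part~(1), $i=1$ in part~(2), and the level matching in the reduction of part~(3); the underlying method, Haar orthogonality combined with the description of $\mu$ on the successive quotients $(1+\varpi^k O_v)/(1+\varpi^{k+1}O_v)$, is entirely standard.
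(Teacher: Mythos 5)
Your proof is correct and follows essentially the same route as the paper's: both arguments rest on the fact that $\mu$ restricted to $1+\varpi^{i-1}O_v$ induces a nontrivial additive character of the residue field, combined with a coset decomposition (your multiplicative translates $x(1+\varpi^{m}O_v)$ coincide with the paper's additive intervals $a+\varpi^{i-j-1}O_v$ since $x$ is a unit) to kill the integral when $\nu$ has too low a level, plus the standard change-of-variable for the upper bound on the level. Your treatment of part (3) by inversion $x\mapsto x^{-1}$ and reduction to part (1) is a clean way to handle the case the paper dismisses as ``very similar,'' and the endpoint evaluations $-\frac{1}{p-1}$ check out.
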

\begin{proof}
If $\nu$ is of level greater than claimed, then the integral is zero by a simple change of variable. 
So we just need to show that when the level of $\nu$ is less than claimed, the integral is also zero. For conciseness we will only prove part (1), as the other two parts are very similar.  

In particular $0<j<i$ in this case. Suppose first $i-j\geq 2$ We can split the integral into intervals $a+\varpi^{i-j-1}O_v$ for $a\in (O_v/\varpi^{i-j-1}O_v)^*$. Then $\nu$ is constant on each such intervals by the condition of its level. On the other hand, let $x=a+\varpi^{i-j-1}u$, then
\begin{equation}
\mu(1+\varpi^j x)=\mu(1+\varpi^j a+\varpi^{i-1}u)=\mu(1+\varpi^j a)\mu(1+\frac{\varpi^{i-1}u}{1+\varpi^j a}),
\end{equation}
and by a change of variable,
\begin{equation}
\int\limits_{v(x)=0}\mu(1+\varpi^j x)\nu(x)d^*x=\frac{1}{(p-1)p^{i-j-2}}\sum\limits_{a\in (O_v/\varpi^{i-j-1}O_v)^*}\int\limits_{u\in O_v}\mu(1+\varpi^j a)\nu(a)\mu(1+\varpi^{i-1}u)du.
\end{equation}
The main observation here is that $\mu(1+\varpi^{i-1}u)$ as a function of $u$ is an additive character. So each integral in $u$ will give 0.

When $i-j=1$, let $\nu$ be of level $0<1$. Then $\mu(1+\varpi^{i-1} x)$ is additive of level $1$ in $x$, and
\begin{equation}
\int\limits_{v(x)=0}\mu(1+\varpi^{i-1} x)d^*x=-\frac{1}{p-1}.
\end{equation}
\end{proof}

Now we record some basic facts about integrals on $\GL_2(\F_v)$ when $v$ is finite. 
\begin{lem} \label{Iwasawadecomp}
For every positive integer $c$,
$$\GL_2(\F_v)=\coprod\limits_{0\leq i\leq c} B\zxz{1}{0}{\varpi_v^i}{1}K_1(\varpi_v^c).$$
Here $B$ is the Borel subgroup of $\GL_2$. 
\end{lem}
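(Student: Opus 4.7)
\emph{Plan.} The idea is to reduce the statement to an orbit analysis on $\P^1(\F_v)$ and then classify orbits by a valuation invariant. By the Iwasawa decomposition $\GL_2(\F_v) = B \cdot K$ with $K = \GL_2(O_v)$, the bottom-row map $g \mapsto [c':d]$ induces a bijection $B \backslash \GL_2(\F_v) \cong \P^1(\F_v)$ onto which the right action of $K_1(\varpi_v^c)$ descends. Counting $B \backslash \GL_2(\F_v)/K_1(\varpi_v^c)$ thus reduces to counting $K_1(\varpi_v^c)$-orbits on $\P^1(\F_v)$, where $k_1 = \zxz{\alpha}{\beta}{\gamma}{\delta} \in K_1(\varpi_v^c)$ acts on $z = c'/d$ via the M\"obius transformation $z \mapsto (\alpha z + \gamma)/(\beta z + \delta)$.

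I would then show that the truncated valuation $i(z) := \max(\min(v(z),c),0) \in \{0,1,\ldots,c\}$ is a complete orbit invariant. Preservation follows from a short case analysis using the constraints $\alpha \in O_v^*$, $\beta \in O_v$, $\gamma \in \varpi_v^c O_v$, $\delta \in 1 + \varpi_v^c O_v$: if $v(z) \geq c$, both $\alpha z + \gamma$ and $\gamma$ lie in $\varpi_v^c O_v$ while $\beta z + \delta$ is a unit modulo $\varpi_v^c$; if $0 < v(z) < c$, the numerator has valuation exactly $v(z)$ and the denominator is a unit; and if $v(z) \leq 0$ (including $z = \infty$), a direct computation shows the image stays in $\{v \leq 0\} \cup \{\infty\}$. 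Surjectivity on each level set is verified by exhibiting explicit $(\alpha,\beta,\gamma,\delta)$ sending $z = \varpi_v^i$ to any prescribed target: with $\gamma = 0$ and $\delta = 1$, the image $\alpha \varpi_v^i/(1 + \beta \varpi_v^i)$ covers the full level set for $0 < i < c$, while the boundary cases $i = 0$ (via the orbit of $\infty$) and $i = c$ (where $\zxz{1}{0}{\varpi_v^c}{1}$ itself lies in $K_1(\varpi_v^c)$) are analogous.

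These $c+1$ orbits correspond to the representatives $z = \varpi_v^i$, which are the bottom-row classes of the matrices $\zxz{1}{0}{\varpi_v^i}{1}$, yielding the claimed disjoint decomposition. The main technical point is verifying orbit-surjectivity in the boundary cases $i = 0$ and $i = c$, where the level sets contain $\infty$ or $0$ respectively; disjointness is then immediate because $i(z)$ takes $c+1$ distinct values on the representatives.
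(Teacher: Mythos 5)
Your proof is correct and complete. The paper itself records this lemma as a standard fact without proof (pointing to the appendix of \cite{YH13} only for the subsequent integration formula), so there is nothing to compare against; your route --- identifying $B\backslash\GL_2(\F_v)$ with $\P^1(\F_v)$ via the bottom row and showing that the truncated valuation $i(z)=\max(\min(v(z),c),0)$ is a complete invariant for the right $K_1(\varpi_v^c)$-action --- is the standard argument, and your case analysis for invariance and your explicit matrices for transitivity on each level set (including the boundary orbits $\{v\le 0\}\cup\{\infty\}$ and $\varpi_v^cO_v\ni 0$) check out.
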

We normalize the Haar measure on $\GL_2(\F_v)$ such that $K_v$ has volume 1. 
Let $db$ be the left Haar measure on $\F_v^*\backslash B(\F_v)$, normalized such that $O_v^*\backslash B(O_v)$ has volume 1.
Then we have the following easy result (see, for example, \cite[Appendix A]{YH13}).
\begin{lem}\label{localintcoefficient}
Locally let $f$ be a $K_1(\varpi_v^c)-$invariant function, on which the center acts trivially. Then
 \begin{equation}
  \int\limits_{F_v^*\backslash\GL_2(\F_v)}f(g)dg=\sum\limits_{0\leq i\leq c}A_i\int\limits_{\F_v^*\backslash B(\F_v)}f(b\zxz{1}{0}{\varpi_v^i}{1})db,
 \end{equation}
where
$$A_0=\frac{p}{p+1}\text{,\ \ \ }A_c=\frac{1}{(p+1)p^{c-1}}\text{,\ \ \  and\ }A_i=\frac{p-1}{(p+1)p^i}\text{\ for\ }0<i<c.$$
\end{lem}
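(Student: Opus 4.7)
The plan is to apply Lemma \ref{Iwasawadecomp} to decompose $\GL_2(\F_v) = \coprod_{i=0}^c B\eta_i K_1(\varpi_v^c)$ with $\eta_i = \zxz{1}{0}{\varpi_v^i}{1}$, and on each double coset reduce the integration to $\F_v^*\backslash B(\F_v)$ using the right $K_1(\varpi_v^c)$-invariance of $f$. Explicitly,
\begin{equation*}
\int_{\F_v^*\backslash\GL_2(\F_v)} f(g)\,dg = \sum_{i=0}^c \int_{\F_v^*\backslash B\eta_i K_1(\varpi_v^c)} f(g)\,dg,
\end{equation*}
and on each summand the parametrization $(b,k)\mapsto b\eta_i k$ from $B\times K_1(\varpi_v^c)$ is surjective onto the double coset, with fibers parametrized by the subgroup $S_i := B \cap \eta_i K_1(\varpi_v^c)\eta_i^{-1}$. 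Performing the $k$-integration using the right $K_1(\varpi_v^c)$-invariance of $f$ collapses each summand into $A_i \int_{\F_v^*\backslash B(\F_v)} f(b\eta_i)\,db$, where $A_i$ is the ratio $\Vol(K_1(\varpi_v^c))/\Vol(\F_v^*\backslash \F_v^* S_i)$ in the prescribed normalizations.

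Second, I would describe $S_i$ explicitly. Writing $k = \zxz{\alpha}{\beta}{\gamma}{\delta}$, a direct matrix computation gives the $(2,1)$-entry of $\eta_i k \eta_i^{-1}$ as $\varpi_v^i(\alpha - \delta) + \gamma - \varpi_v^{2i}\beta$; forcing this to vanish together with the $K_1(\varpi_v^c)$-conditions $\gamma\equiv 0$ and $\delta\equiv 1\pmod{\varpi_v^c}$ produces congruences among $\alpha,\beta,\delta$ modulo suitable powers of $\varpi_v$. The shape of $S_i$ changes qualitatively with $i$: for $0\leq i<c$ the constraints pin $\alpha$ to a specific residue class modulo $\varpi_v^{c-i}$ (in terms of $\beta$ and $\delta$), while for $i = c$ one has $\eta_c \in K_1(\varpi_v^c)$ itself, so $S_c = B\cap K_1(\varpi_v^c)$ with no extra condition. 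These regimes yield the three different formulas for $A_i$.

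Finally, I would compute $\Vol(S_i)$ from the congruence description and form the ratio with $\Vol(K_1(\varpi_v^c))$, noting that the central factor $Z(\F_v)\cap K_1(\varpi_v^c) = 1+\varpi_v^c O_v$ appears identically in numerator and denominator and cancels. The main obstacle is not conceptual but rather the careful bookkeeping of Haar measure normalizations across $K_v$, $B$, and the center in each of the three regimes. A useful sanity check is the identity $\sum_{i=0}^c A_i = 1$, obtained by testing with $f = \charf_{K_v}$ (both sides then equal $\Vol(\F_v^*\backslash \F_v^* K_v) = 1$); together with the qualitative structure from the previous step this pins down the stated values $A_0 = p/(p+1)$, $A_i = (p-1)/((p+1)p^i)$ for $0<i<c$, and $A_c = 1/((p+1)p^{c-1})$.
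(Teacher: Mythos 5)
Your outline is correct, and there is nothing in the paper to compare it against: the paper does not prove Lemma \ref{localintcoefficient} but delegates it to \cite[Appendix A]{YH13}, and your argument (decompose via Lemma \ref{Iwasawadecomp}, fiber each double coset $B\eta_i K_1(\varpi_v^c)$ over $S_i=B\cap\eta_iK_1(\varpi_v^c)\eta_i^{-1}$, which acts measure-preservingly since $\delta_B$ is trivial on the compact group $S_i$, and take volume ratios) is the standard proof. The bookkeeping does close up: one finds $\Vol(\F_v^*\backslash\F_v^*K_1(\varpi_v^c))=\frac{1}{(p+1)p^{c-1}}$ and, from the congruence description of $S_i$, $\Vol(\F_v^*\backslash\F_v^*S_i)=\frac{1}{(p-1)p^{c-i-1}}$ for $0<i<c$, $p^{-c}$ for $i=0$, and $1$ for $i=c$, giving exactly the stated $A_i$; your identity $\sum_i A_i=1$ is then a genuine consistency check (though by itself it of course does not pin down $c+1$ constants).
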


\subsection{Triple product formula}
Let $\pi_i$, $i=1,2,3$ be three unitary cuspidal automorphic representations with central characters $w_{\pi_i}$. Suppose that 
\begin{equation}
\prod_i w_{\pi_i}=1.
\end{equation}
Let $\Pi=\pi_1\otimes\pi_2\otimes\pi_3$. Then one can associate the triple product L-function $L(\Pi, s)$ to $\Pi$. See \cite{Garrett} and \cite{ps}. 

There exist local epsilon factors $\epsilon_v(\Pi_v,\psi_v,s)$ and global epsilon factor $\epsilon(\Pi,s)=\prod_v\epsilon(\Pi_v,\psi_v,s)$, such that,
\begin{equation}
 L(\Pi,1-s)=\epsilon(\Pi,s)L(\check{\Pi},s).
\end{equation}
With the assumption that $\prod_i w_{\pi_i}=1$, we have $$\Pi\cong\check{\Pi}.$$ The special values of local epsilon factors $\epsilon_v(\Pi_v,\psi_v,1/2)$ are actually independent of $\psi_v$ and always take value $\pm 1$. For simplicity, we will write 
$$\epsilon_v(\Pi_v,1/2)=\epsilon_v(\Pi_v,\psi_v,1/2).$$ For any place $v$, there is a unique (up to isomorphism) division algebra $\D_v$. Then
Prasad proved in \cite{Prasad} the following theorem about the dimension of the space of local trilinear forms:
\begin{theo}
\begin{enumerate}
 \item $\dim\Hom_{\GL_2(\F_v)}(\Pi_v,\C)\leq 1$, with the equality if and only if $\epsilon_v(\Pi_v,1/2)=1$.
 \item $\dim\Hom_{\D_v}(\Pi^{\D_v}_v,\C)\leq 1$, with the equality if and only if $\epsilon_v(\Pi_v,1/2)=-1$.
\end{enumerate}
Here $\Pi^{\D_v}_v$ is the image of $\Pi_v$ under Jacquet-Langlands correspondence.
\end{theo}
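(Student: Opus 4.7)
My plan would follow Prasad's original approach, splitting the statement into two parts: the multiplicity one bound (the ``$\leq 1$'' in both cases), and the dichotomy which determines in which of the two groups the invariant form actually exists.

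For the multiplicity one statement, the plan is to use a Gelfand pair style argument. Viewing $\Pi_v$ as a representation of $\GL_2(\F_v)$ embedded diagonally in $\GL_2(\F_v)^3$, one wants to show that $(\GL_2(\F_v)^3, \Delta\GL_2(\F_v))$ is a generalized Gelfand pair for the category of admissible representations. The standard route is to exhibit an anti-involution $\sigma$ on $\GL_2(\F_v)^3$ fixing the diagonal and acting by inversion on double cosets, then invoke Gelfand--Kazhdan theory to conclude that every bi-invariant distribution on $\GL_2(\F_v)^3$ is $\sigma$-invariant, forcing multiplicity one for each irreducible triple. The same argument works verbatim with $\GL_2(\F_v)$ replaced by $\D_v^*$, since the anti-involution extends. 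This is a clean structural step and I would handle it first.

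For the dichotomy, I would proceed by cases on the types of $\pi_1, \pi_2, \pi_3$. When at least one of the $\pi_{i,v}$, say $\pi_{3,v}$, is a principal series $\mathrm{Ind}_B^{\GL_2}(\chi_1,\chi_2)$, Frobenius reciprocity reduces the $\Hom$ space to a space of $B$-equivariant forms on $\pi_{1,v}\otimes\pi_{2,v}$. Analyzing the $B$-orbit stratification of $\GL_2(\F_v)$ (there is an open orbit and its complement), one reduces via Mackey theory to studying Hom spaces between Jacquet modules of $\pi_{1,v}$ and $\pi_{2,v}$ twisted by the inducing characters; existence then becomes a nonvanishing condition expressible via local root numbers, and one verifies by direct computation that the condition holds exactly when $\epsilon_v(\Pi_v,1/2)=1$. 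The complementary case (all three in the discrete series) is the hard part; here I would invoke the local Jacquet--Langlands correspondence to transfer to $\D_v^*$ where the representations are finite-dimensional, and then compute both sides of the proposed equality by brute force or via the theta correspondence for the pair $(\GL_2,\GO)$ for a suitable orthogonal group, which packages the two cases $\GL_2$ versus $\D_v^*$ naturally through the splitting of the Hasse invariant.

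The main obstacle is clearly the supercuspidal case combined with the epsilon factor identification. Multiplicity one is soft, and the principal series case is essentially a character sum computation; but pinning down the sign $\epsilon_v(\Pi_v,1/2)$ for purely supercuspidal triples requires a nontrivial input. The cleanest way I know is via the see-saw identity for the dual pair $(\GL_2\times\GL_2,\mathrm{O}(V))$ where $V$ runs over the two four-dimensional quadratic spaces of discriminant $1$ over $\F_v$ (the split one corresponding to $\mathrm{M}_2(\F_v)$ and the non-split one to $\D_v$), together with the conservation relation for local theta correspondences. I expect to have to make this dichotomy explicit representation by representation in the supercuspidal case, which is where the bulk of the work lies; I would defer that computation until after the principal series case has fixed the general shape of the answer.
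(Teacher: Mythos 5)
This theorem is not proved in the paper at all: it is quoted verbatim as Prasad's result and attributed to \cite{Prasad}, so there is no in-paper argument to compare yours against. Judged on its own terms, your outline does reproduce the architecture of Prasad's original proof --- multiplicity one, then the principal-series case by Frobenius reciprocity and the $B$-orbit stratification of $\P^1\times\P^1$ (reducing to Jacquet modules and an explicit root-number computation), then the remaining discrete-series/supercuspidal cases via Jacquet--Langlands and the theta correspondence for $(\GL_2\times\GL_2,\mathrm{O}(V))$ with $V$ the two four-dimensional quadratic spaces. So the shape is right.

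However, as a proof it has two genuine gaps. First, the step you call ``soft'' is not: the Gelfand--Kazhdan argument for $\dim\Hom_{\Delta\GL_2}(\pi_1\otimes\pi_2\otimes\pi_3,\C)\leq 1$ requires a distribution-vanishing theorem for the anti-involution on $\Delta\GL_2\backslash\GL_2^3/\Delta\GL_2$, and verifying $\sigma$-invariance of all bi-invariant distributions there is a substantive piece of work (in Prasad's paper multiplicity one is in fact extracted case by case from the same Mackey-theoretic analysis rather than from a clean Gelfand-pair statement, precisely because the closed orbits contribute potential extra distributions that must be ruled out). Second, and more seriously, you explicitly defer the identification of the sign $\epsilon_v(\Pi_v,1/2)$ in the all-supercuspidal case, which is the entire content of the dichotomy: the see-saw and conservation-relation machinery you invoke must still be pinned down representation by representation (dihedral versus non-dihedral supercuspidals, residue characteristic $2$ being especially delicate), and without that the ``if and only if'' in both parts of the theorem is unestablished. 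A plan that defers exactly the hard case is not yet a proof; you would need to either carry out that computation or cite it.
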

This motivated the following result which is conjectured by Jacquet and later on proved by Harris and Kudla in \cite{H&K91} and \cite{H&K04}:
\begin{theo}\label{thmofJacquetconj}
 $$\{L(\Pi,1/2)\neq 0\}  \Longleftrightarrow \left\{ \begin{array}{c}
                                                    \text{ there exist\ } \D\text{\ and\ } f_i\in \pi_i^{\D} \text{\ s.t.}\\
                                                    \int\limits_{Z_{\A}\D^*(\F)\backslash \D^*(\A)} f_1(g)f_2(g)f_3(g) dg\neq 0
                                                   \end{array}.\right\}
 $$
\end{theo}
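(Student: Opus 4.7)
The plan is to prove the two implications separately, both leaning on Prasad's theorem stated just above. My first observation is that when $L(\Pi,1/2)\neq 0$, the functional equation combined with $\Pi\cong\check{\Pi}$ forces the global root number $\epsilon(\Pi,1/2)=\prod_v\epsilon_v(\Pi_v,1/2)$ to equal $+1$, so the set $S=\{v:\epsilon_v(\Pi_v,1/2)=-1\}$ has even cardinality and there exists a unique quaternion algebra $\D/\F$ ramified exactly at $S$. By Prasad's theorem, this $\D$ is the unique algebra for which all the local trilinear Hom spaces are one-dimensional, hence the only candidate capable of supporting a nonvanishing global trilinear period. Both directions therefore reduce to comparing a global period on this specific $\D$ with $L(\Pi,1/2)$.

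For the easier implication $(\Leftarrow)$, I would invoke an Ichino-type identity of the shape
\[
\left|\int_{Z_\A \D^*(\F)\backslash \D^*(\A)} f_1(g)f_2(g)f_3(g)\,dg\right|^2 = c\cdot\frac{L(\Pi,1/2)}{L(\pi_1,\mathrm{Ad},1)L(\pi_2,\mathrm{Ad},1)L(\pi_3,\mathrm{Ad},1)}\prod_v I_v,
\]
where each $I_v$ is a local integral of matrix coefficients of the form appearing in Section 1 and $c$ is an explicit nonzero constant. If the left-hand side is nonzero then every local factor is nonzero and, crucially, $L(\Pi,1/2)\neq 0$. Historically this direction predates Ichino's formula and can also be extracted from Garrett's original integral representation together with a local unramified computation.

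The $(\Rightarrow)$ implication is the deep content. Following Harris-Kudla, I would realize the triple product via theta correspondence: take $V=\D$ with the reduced norm as a four-dimensional quadratic space, and work with theta lifts for the dual pair $(\mathrm{O}(V),\mathrm{Sp}_6)$, exploiting that $\mathrm{GSO}(V)$ is essentially $\D^*\times\D^*$ modulo the central diagonal. A see-saw identity pairs a Siegel-Eisenstein period on $\mathrm{Sp}_6$ against a third copy of $\D^*$, and the Rallis inner product formula identifies the square of the resulting theta lift with $L(\Pi,1/2)$ up to local factors that Prasad's dichotomy forces to be nonzero on precisely our chosen $\D$. Nonvanishing of $L(\Pi,1/2)$ then produces a nonzero theta lift, and unwinding the see-saw delivers the required $f_i\in \pi_i^\D$ with nonzero trilinear period.

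The main obstacle is the local theta-correspondence input for the hard direction: one must verify that the local Rallis pairing does not vanish at any place (so that the global $L$-nonvanishing is not swallowed by a trivial local zero), that the local theta dichotomy matches Prasad's epsilon dichotomy exactly, and that a global Schwartz-Bruhat function can be chosen to make the see-saw unfolding rigorous at the ramified places. The deepest technical difficulty, and the bulk of the work in \cite{H&K91} and \cite{H&K04}, is extending the Siegel-Weil formula into the regime relevant for triple products through a second-term identity and regularized Eisenstein series; once this is in place, the remainder of the proof is essentially a formal consequence of Prasad's theorem and the functional equation.
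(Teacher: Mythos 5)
The paper does not prove this statement at all: it is Jacquet's conjecture, and the text explicitly presents it as an external result of Harris--Kudla (\cite{H&K91}, \cite{H&K04}), quoted as a black box after Prasad's local theorem. So there is no internal proof to compare against; what you have written is a roadmap of the actual proof in the literature. As a roadmap it is accurate: the reduction via the functional equation to a global root number $+1$ and hence to a unique admissible quaternion algebra $\D$ ramified exactly at $\{v:\epsilon_v(\Pi_v,1/2)=-1\}$, the use of Prasad's dichotomy to rule out every other $\D'$ (and, one should add, the existence of the Jacquet--Langlands transfers $\pi_i^{\D}$ at the ramified places), the easy direction via an Ichino-type identity or Garrett's integral representation, and the hard direction via the see-saw for $(\mathrm{O}(V),\mathrm{Sp}_6)$ with $V=(\D,\mathrm{Nm})$ and the Rallis inner product formula --- this is exactly the architecture of Harris--Kudla.

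However, you should be clear that what you have produced is not a proof but an outline whose every load-bearing step is deferred to the cited references. The regularized Siegel--Weil formula and second-term identity, the verification that the local theta dichotomy coincides with Prasad's epsilon dichotomy, and the nonvanishing of the local zeta integrals for suitable Schwartz data at ramified places are not routine verifications one can leave to the reader; they constitute the entire content of the two Harris--Kudla papers. In the context of the present paper this is the right level of detail --- the author likewise treats the theorem as an input --- but as a standalone proof attempt the proposal establishes neither implication; it only correctly identifies where the difficulties lie and which theorems resolve them.
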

Here the quaternion algebra $\D$ is uniquely determined by the local epsilon factors as in Prasad's criterion. This result hints that  $$\int\limits_{Z_{\A}\D^*(\F)\backslash \D^*(\A)} f_1(g)f_2(g)f_3(g) dg$$ could be a potential integral representation of special value of triple product L-function.
Later on there are a lot of work on explicitly relating both sides. In particular one can see Ichino's work in \cite{Ichino}. We only need a special version here .

\begin{equation}\label{Globaltriple}
 |\int\limits_{Z_{\A}\D^*(\F)\backslash \D^*(\A)} f_1(g)f_2(g)f_3(g) dg|^2=\frac{\zeta_\F^2(2)L(\Pi,1/2)}{8L(\Pi,Ad,1)}\prod_vI_v^0(f_{1,v},f_{2,v},f_{3,v}),
\end{equation}
where 
\begin{equation}\label{localtripleF}
I_v^0(f_{1,v},f_{2,v},f_{3,v})=\frac{L_v(\Pi_v,Ad,1)}{\zeta_v^2(2)L_v(\Pi_v,1/2)}I_v(f_{1,v},f_{2,v},f_{3,v}),
\end{equation}
and 
\begin{equation}
 I_v(f_{1,v},f_{2,v},f_{3,v})=\int\limits_{\F_v^*\backslash \D^*(\F_v)}\prod\limits_{i=1}^{3}<\pi^{\D}_i(g)f_{i,v},f_{i,v}>dg.
\end{equation}
We will however be mainly interested in the case when $\D$ is the matrix algebra. If $\D$ is a division algebra, the integral $\mu_f(\phi)$ will be zero automatically.

\subsection{Whittaker model for some highly ramified representations}
This subsection is purely local, so we will suppress the subscript $v$ for all notations.

Let $\pi$ be a local irreducible (generic) representation of $\GL_2$. Let $\psi$ be a fixed unramified additive character.
Then there is a unique realization of $\pi$ in the space of functions $W$ on $\GL_2$ such that
\begin{equation}
W(\zxz{1}{n}{0}{1}g)=\psi(n)W(g).
\end{equation}

When $\pi$ is unitary, one can define a unitary pairing on $\pi$ using the Whittaker model:
\begin{equation}\label{localnormalization2}
<W_1,W_2>=\int_{\F^*}W_1(\zxz{\alpha}{0}{0}{1})\overline{W_2(\zxz{\alpha}{0}{0}{1})}d^*\alpha.
\end{equation}

Now let $\pi$ be a supercuspidal representation.
The Kirillov model of $\pi$ is a unique realization on the space of Schwartz functions $S(\F ^*)$ such that
\begin{equation}\label{Kirilmodel}
\pi(\zxz{a_1}{m}{0}{a_2})\varphi(x)=w_{\pi}(a_2)\psi(ma_2^{-1}x)\varphi(a_1a_2^{-1}x),
\end{equation}
where $w_{\pi}$ is the central character for $\pi$. 
Let $W_\varphi$ be the Whittaker function associated to $\varphi$. Then they are related by
$$\varphi(\alpha)=W_\varphi(\zxz{\alpha}{0}{0}{1}),$$
$$W_\varphi(g)=\pi(g)\varphi(1).$$
When $\pi$ is unitary, one can define the $G-$invariant unitary pairing on Kirillov model by
\begin{equation}
<f_1,f_2>=\int\limits_{\F ^*}f_1(x)\overline{f_2}(x)d^*x.
\end{equation}
This is consistent with the unitary pairing defined above using Whittaker model.

By Bruhat decomposition, one just has to know the action of $\omega=\zxz{0}{1}{-1}{0}$ to understand the whole group action. 

For $\nu$ a character of $O_v^*$,
define $$\charf_{\nu,n}(x)=\begin{cases}
                        \nu(u), &\text{if\ } x=u\varpi^n\text{\  for\ } u\in O_v^*;\\
						0,&\text{otherwise}.
                       \end{cases}
 $$ Roughly speaking, it's the character $\nu$ supported at $v(x)=n$.
We can then describe the action of $\omega=\zxz{0}{1}{-1}{0}$ on $\charf_{\nu,n}$ explicitly according to \cite{JL70}:

\begin{equation}\label{singleaction}
\pi(\omega)\charf_{\nu,n}=C_{\nu w_0^{-1}}z_0^{-n}\charf_{\nu^{-1}w_0,-n+n_{\nu^{-1}}}.
\end{equation}
Here $z_0=w_\pi(\varpi)$ and $w_0=w_\pi|_{O_F^*}$. $n_\nu$ is an integer decided by the representation $\pi$ and the character $\nu$ (and independent of $n$).
It's well-known that $n_{\nu}\leq -2$ for any $\nu$.
When we pick $\nu$ to be the trivial character, the number $-n_1$ is actually the level of this supercuspidal representation. Denote $c=-n_1$. 
The  local new form is simply $\charf_{1,0}$.

The relation $\omega^2=-\zxz{1}{0}{0}{1}$ implies 
\begin{equation}
n_{\nu}=n_{\nu^{-1}w_0^{-1}},\text{\ \ } C_\nu C_{\nu^{-1}w_0^{-1}}=w_0(-1)z_0^{n_\nu}.
\end{equation}

When $\pi$ is unitary, we have
\begin{equation}\label{absolutevalueofC}
 |C_\nu|=1.
\end{equation}
One can easily show this by using the fact that 
\begin{equation}
 <\pi(\omega)\charf_{\nu,0},\pi(\omega)\charf_{\nu,0}>=<\charf_{\nu,0},\charf_{\nu,0}>.
\end{equation}

It is essentially proved in \cite[Proposition B.3]{YH13} that

\begin{prop}\label{Propofnmu}
Suppose that $c=-n_1\geq 2$ is the level of a supercuspidal representation $\pi$ whose central character is unramified or level 1. If $p\neq 2$ and $\nu$ is a level $i$ character, then we have $$n_\nu=\min\{-c,-2i\}.$$ 
When $p=2$ or the central character of $\pi$ is highly ramified, we have the same statement, except when $c\geq 4$ is an even integer and $i=c/2$. In that case, we only claim $n_\nu\geq -c$.
\end{prop}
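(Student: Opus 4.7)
My plan is to identify $-n_\nu$ with the conductor $c(\pi\otimes\nu)$ of the twisted representation, and then invoke the classical twist formula for supercuspidal conductors.

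\emph{Reformulation as a stabilizer problem.} By (\ref{singleaction}) applied with $n=0$, the vector $\pi(\omega)\charf_{\nu^{-1},0}$ equals, up to a nonzero scalar, $\charf_{\nu w_0,\,n_\nu}$, so its support in the Kirillov model is concentrated at level $n_\nu$. Using the commutation $\omega\zxz{1}{0}{-a}{1}\omega^{-1}=\zxz{1}{a}{0}{1}$, together with the fact that a function $\charf_{\mu,m}$ is fixed by $\zxz{1}{a}{0}{1}$ exactly when $v(a)\geq -m$ (because $\psi$ is unramified), one sees that $-n_\nu=c'$, where $c'$ is the smallest nonnegative integer such that $\pi(\zxz{1}{0}{a}{1})\charf_{\nu^{-1},0}=\charf_{\nu^{-1},0}$ for all $a$ with $v(a)\geq c'$.

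\emph{Identification with the twist conductor.} I would then check that $\charf_{\nu^{-1},0}$ is, up to scalar, the new vector of $\pi':=\pi\otimes\nu$, so that $c'=c(\pi\otimes\nu)$. Upper-unipotent invariance is automatic ($\psi$ is unramified and $\charf_{\nu^{-1},0}$ lives on $O_v^*$); the $\zxz{u}{0}{0}{1}$-eigenvalue $\nu^{-1}(u)$ in $\pi$ is cancelled by the twist factor $\nu(u)$ in $\pi'$; the $\zxz{1}{0}{0}{d}$-action in $\pi'$ produces the factor $w_\pi(d)\nu^2(d)=w_{\pi'}(d)$, which is trivial on $1+\varpi_v^{c'}O_v$ once $c'$ is at least the conductor exponent of $w_{\pi'}$; and the remaining lower-unipotent condition is precisely the stabilizer of Step~1. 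Combining these, $-n_\nu=c(\pi\otimes\nu)$.

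\emph{Twist formula and exceptional case.} Applying the standard conductor-twist formula for supercuspidal representations,
\begin{equation*}
c(\pi\otimes\nu)=\max\{c,\,2i\}\quad\text{whenever }i\neq c/2,
\end{equation*}
one obtains $n_\nu=\min\{-c,\,-2i\}$ in the main case. When $c\geq 4$ is even and $i=c/2$, the formula only gives the one-sided bound $c(\pi\otimes\nu)\leq c$, equivalently $n_\nu\geq -c$: the twist can drop the conductor because $\nu$ may cancel part of the inducing character of $\pi$ on the depth-$c/2$ ramification subgroup. When $p\neq 2$ and $w_\pi$ is unramified or of level~$1$, a short epsilon-factor argument rules out this cancellation, recovering the equality $n_\nu=-c=-2i$ and completing the non-exceptional statement.

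\emph{Main obstacle.} The main difficulty lies in the borderline case $i=c/2$ with $p=2$ or $w_\pi$ highly ramified, where the cancellation above is no longer generically excluded. Pinning down $n_\nu$ here would require unpacking $\pi$ through its explicit inducing data (a character $\theta$ of a quadratic extension $E/\F_v$ together with the associated Weil representation) and checking whether $\nu$ coincides with a specific restriction of $\theta$ against its Galois conjugate. Since only the weaker bound $n_\nu\geq -c$ is needed for the applications in this paper, I would settle for the inequality from Step~3 in this range.
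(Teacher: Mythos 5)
Your proposal is correct, but it cannot be following "the paper's route" because the paper contains no proof of this proposition: it defers entirely to \cite[Proposition~B.3]{YH13}, where the result is obtained by explicit computation in the Kirillov/Whittaker model of the supercuspidal representation. Your route is the standard external one: conjugating the lower-unipotent stabilizer of $\charf_{\nu^{-1},0}$ by $\omega$ and using (\ref{singleaction}) correctly gives $-n_\nu=c'$, and the comparison of Kirillov models $\varphi\mapsto\nu\varphi$ identifies $\charf_{\nu^{-1},0}$ with the newvector $\charf_{1,0}$ of $\pi\otimes\nu$, so that $n_\nu(\pi)=n_1(\pi\otimes\nu)=-c(\pi\otimes\nu)$ (the paper itself records that $-n_1$ is the level). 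The twist formula $c(\pi\otimes\nu)=\max\{c,2i\}$ for $i\neq c/2$, together with $2\leq c(\pi\otimes\nu)\leq\max\{c,2i\}$ in general, then yields everything, including the bound $n_\nu\geq-c$ in the exceptional case and the equality at $c=2$, $i=1$. What each approach buys: yours is shorter and makes the exceptional case conceptually transparent (an irreducible two-dimensional Weil--Deligne representation has a single break $(c-2)/2$, and only a twist of equal break $i-1=c/2-1$ can lower it); the computational route of \cite{YH13} additionally produces the constants $C_\nu$, which the paper needs elsewhere.

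The one step you should make explicit is the ``short epsilon-factor argument'' at $i=c/2$, $p\neq2$, $a(w_\pi)\leq1$, since this is the only place the hypotheses on $p$ and $w_\pi$ enter. The clean version is a central-character argument: if $c(\pi\otimes\nu)=c''<c$, then the central character $w_\pi\nu^2$ of $\pi\otimes\nu$ has level at most $c''/2<c/2$ (its break is bounded by the common break $(c''-2)/2$ of $\pi\otimes\nu$); but for $p$ odd and $i=c/2\geq2$ squaring preserves the level, so $a(\nu^2)=c/2$, and since $a(w_\pi)\leq1<c/2$ we get $a(w_\pi\nu^2)=c/2$, a contradiction. With that inserted, the argument is complete.
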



Now let $\pi$ be a unitary induced representation $\pi(\mu_1,\mu_2)$, where $\mu_1$ and $\mu_2$ are both ramified of level $k$. Let $c=2k$ be the level of $\pi$. Then by the classical results in \cite{Ca73}, there exists a new form in the model of induced representation, which is right $K_1(\varpi^c)-$invariant and supported on
$$B\zxz{1}{0}{\varpi^{k}}{1}K_1(\varpi^c),$$
where $B$ is the Borel subgroup.
From now on let $W$ be the Whittaker function associated to this new form.

Locally for an induced representation of $\GL_2$, one can compute its Whittaker functional by the following formula:
\begin{equation}\label{computeWhit}
W (g)=\int\limits_{m\in \F }\varphi(\omega\zxz{1}{m}{0}{1}g)\psi(-m)dm,
\end{equation}
where $\varphi$ is an element of $\pi$ in the model of induced representation and $\omega$ is the matrix $\zxz{0}{1}{-1}{0}$.

\begin{defn}\label{defofWi}
 Denote 
\begin{equation}\label{Wi}
 W^{(i)}(\alpha)=W (\zxz{\alpha}{0}{0}{1}\zxz{1}{0}{\varpi^i}{1}).
\end{equation}
\end{defn}
Let 
\begin{equation}
C_0=\int\limits_{u\in O_F^*} \mu_1(-\varpi^{k})\mu_2(-\varpi^{-k} u)\psi(-\varpi^{-k} u)du.
\end{equation}

In \cite{YH14} we gave the following formulae to compute $W^{(i)}$:

If $i<k$, then 
\begin{equation}
W^{(i)}(\alpha)=C_0^{-1}\int\limits_{u\in O_F^*}\mu_1(-\frac{\varpi^i}{u})\mu_2(\alpha\varpi^{-i}(1-\varpi^{k-i}u))\psi(\alpha\varpi^{-i}(1-\varpi^{k-i}u))p^{i-k-v(\alpha)/2}du.
\end{equation}

If $k<i\leq c$, then 
\begin{equation}
W^{(i)}(\alpha)=C_0^{-1}\int\limits_{u\in O_F^*} \mu_1(-\frac{\varpi^{k}}{1+u\varpi^{i-k}})\mu_2(-\varpi^{-k}\alpha u)p^{-v(\alpha)/2}\psi(-\varpi^{-k}\alpha u)du.
\end{equation}

If $i=k$, 
\begin{equation}
W^{(k)}(\alpha)=C_0^{-1}\int\limits_{v(u)\leq -k,u\notin \varpi^{-k}(-1+\varpi O_F)}\mu_1(-\frac{\varpi^{k}}{1+u\varpi^{k}})\mu_2(-\alpha u)|\frac{\varpi^{k}}{\alpha u(1+u\varpi^{k})}|^{1/2}\psi(-\alpha u)p^{-v(\alpha)}du.
\end{equation}

\begin{defn}

We will say that a function $f(x)$  consists of level $i$ components, if
\begin{equation}
f(x)=\sum\limits_{\nu,n}a_{\nu,n}\charf_{\nu,n},
\end{equation}
where $a_{\nu,n}\neq 0$ only if $\nu$ is of level $i$.


By $L^2$ norm of a sequence of numbers $\{a_i\}$, we mean
\begin{equation}
(\sum |a_i|^2)^{1/2}.
\end{equation}

We will say that $f(x)$ consists of level $i$ components with coefficients of $L^2$ norm $h$,
if $f(x)$ consists of level $i$ components and the sequence of coefficients $\{a_{\nu,n}\}$ is of $L^2$ norm $h$.

\end{defn}


\begin{prop}\label{propWiofram}
Let $\pi$ be a supercuspidal representation of level $c$, or induced representation $\pi(\mu_1,\mu_2)$ where $\mu_1$ and $\mu_2$ are both ramified of level $k=c/2$. Let $W$ be the normalized Whittaker function  for a local new form of $\pi$, and $W^{(i)}$ be as in Definition \ref{defofWi}.
 \begin{enumerate}
\item $W^{(c)} (\alpha)=\charf_{1,0}(\alpha)$.
\item For $i=c-1>1$, $W^{(c-1)} (\alpha)$ is supported only at $v(\alpha)=0$, consisting of level $1$ components with coefficients of $L^2$ norm $\sqrt{\frac{p(p-2)}{(p-1)^2}}$, and also level 0 component with coefficient being $-\frac{1}{p-1}$.
\item In general for $0\leq i<c-1$, $i\neq k$, $W^{(i)} (\alpha)$ is supported only at $v(\alpha)=\min\{0, 2i-c\}$, consisting of level $c-i$ components with coefficients of $L^2$ norm $1$.
\item When $i=k>1$, $W^{(c/2)}$ is supported at $v(\alpha)\geq 0$, consisting of level $c/2$ components with coefficients of $L^2$ norm $1$. 

\noindent 
When $i=k=1$, $W^{(1)} (\alpha)$ consists of level 0 component at $v(\alpha)=0$ with coefficient being $-\frac{1}{p-1}$, and level $1$ components at $v(\alpha)\geq 0$ with coefficients of $L^2$ norm $\sqrt{\frac{p(p-2)}{(p-1)^2}}$.
\end{enumerate}
\end{prop}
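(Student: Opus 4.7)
The plan is to treat the supercuspidal and the principal series cases separately, splitting within each by the value of $i$ and systematically identifying both the support of $W^{(i)}$ and the level of the multiplicative characters that appear; the $L^2$-norm assertions will then follow from unitarity.

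Part (1) is immediate: since $\zxz{1}{0}{\varpi^c}{1}\in K_1(\varpi^c)$, right invariance of the new form gives $W^{(c)}(\alpha)=W(\zxz{\alpha}{0}{0}{1})$, which is $\charf_{1,0}(\alpha)$ by the Kirillov normalization in the supercuspidal case. In the induced case we substitute $u\mapsto u/\alpha$ in the integral formula for $W^{(c)}$ and apply Lemma \ref{lemofGaussint} to the resulting Gauss sum to see it vanishes unless $v(\alpha)=0$, where it reduces to $C_0/C_0=1$.

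For parts (2)--(4) in the supercuspidal case, I would apply the Bruhat decomposition
$$\zxz{1}{0}{\varpi^i}{1}=\zxz{1}{\varpi^{-i}}{0}{1}\,\omega\,\zxz{-\varpi^i}{-1}{0}{-\varpi^{-i}}$$
and act on $\charf_{1,0}$ in the Kirillov model stage by stage. The first (Borel) action produces a function supported at $v(x)=-2i$ valued by the additive character $u\mapsto\psi(u\varpi^{-i})$ up to a unit, which Lemma \ref{lemofGaussint} decomposes as a sum of $\charf_{\nu,-2i}$ with $\nu$ of level exactly $i$ (plus a trivial-character term with coefficient $-1/(p-1)$ when $i=1$). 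Applying $\pi(\omega)$ via (\ref{singleaction}) together with Proposition \ref{Propofnmu} shifts the support to $v(x)=\min(0,2i-c)$, and multiplying by $\psi(\varpi^{-i}x)$ from the remaining unipotent and re-decomposing with Lemma \ref{lemofGaussint} forces the resulting multiplicative level to be $c-i$ (with the analogous level-$0$ contribution when $c-i=1$, i.e.\ $i=c-1$). For the induced case, the same support and level conclusions follow by substituting the three explicit integral formulas for $W^{(i)}$ into Lemmas \ref{lemofGaussint} and \ref{lemoflocalintofcharacters}, treating $\mu_1,\mu_2$ as multiplicative weights against the additive character in the $u$-integrand. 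The $L^2$-norm equalities then come from unitarity: $\int_{\F^*}|W^{(i)}(\alpha)|^2d^*\alpha=\|\pi(\zxz{1}{0}{\varpi^i}{1})\charf_{1,0}\|^2=1$ via the pairing (\ref{localnormalization2}), so the claimed norms $\sqrt{p(p-2)/(p-1)^2}$ (for the exceptional level-$1$ block) and $1$ (for the generic block) are forced by the character counts $(p-2)$ and $(p-1)^2p^{c-i-2}$ respectively, together with the Gauss-sum magnitudes from Lemma \ref{lemofGaussint}.

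The main obstacle is the diagonal case $i=k$. In the induced case, the $W^{(k)}$ integral carries the exceptional domain $v(u)\leq -k$, $u\notin\varpi^{-k}(-1+\varpi O_F)$; splitting into the annulus $v(u)=-k$ (which contributes the level-$0$ coefficient $-1/(p-1)$ in the subcase $k=1$) and the tail $v(u)<-k$ yields a function whose combined support is all of $v(\alpha)\geq 0$ rather than a single annulus, which is precisely the weaker support statement in (4). In the supercuspidal case, the same phenomenon reflects the weakened estimate $n_\nu\geq -c$ in Proposition \ref{Propofnmu} for $\nu$ of level $c/2$ when $p=2$ or the central character is highly ramified; here counting characters is no longer aligned with a single annulus, and the $L^2$-norm $1$ is most cleanly obtained from the unitarity identity above rather than a direct Gauss-sum count.
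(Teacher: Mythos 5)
Your proposal follows essentially the same route as the paper's. The paper obtains the $L^2$-norm claims from the $G$-invariance of the unitary pairing (\ref{localnormalization2}) exactly as you do; in the induced case it projects $W^{(i)}$ onto multiplicative characters and invokes Lemma \ref{lemofGaussint} and Lemma \ref{lemoflocalintofcharacters} to isolate the contributing levels and the support, which is your ``treat $\mu_1,\mu_2$ as multiplicative weights against the additive character'' step; and your diagnosis of the diagonal case $i=k$ (the exceptional domain $v(u)\leq -k$ in the induced case, the weakened bound $n_\nu\geq -c$ of Proposition \ref{Propofnmu} in the supercuspidal case) matches the paper's. The main structural difference is that for the supercuspidal support/level statements the paper simply cites Corollary 2.18 of \cite{YH14}, whereas you reconstruct that argument via the Bruhat decomposition and the action formula (\ref{singleaction}); that is in fact the method of the cited reference, so it is a filling-in of the citation rather than a new route.

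One step of that reconstruction is under-justified: for $c/2<i<c-1$ in the supercuspidal case, after applying $\omega$ the components sit at $v(x)=0$ with multiplicative level $i$, and the remaining unipotent contributes $\psi(\varpi^{-i}x)$, which on units also decomposes into level-$i$ characters. Lemma \ref{lemofGaussint} alone does not force the product to consist of components of level exactly $c-i$: a product of two level-$i$ characters can have any level $\leq i$, and here $c-i<i$. To close this you need either to track the constants $C_{\nu}$ from (\ref{singleaction}) through the Gauss sums, or to add the observation that conjugating $\zxz{u}{0}{0}{1}$ past $\zxz{1}{0}{\varpi^i}{1}$ gives $\zxz{u}{0}{(1-u)\varpi^i}{1}\in K_1(\varpi^c)$ for $u\in 1+\varpi^{c-i}O_v$, so $W^{(i)}(\alpha u)=W^{(i)}(\alpha)$ there and the level is capped at $c-i$, together with an argument excluding strictly lower levels. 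Since this is precisely the content of the result the paper cites, the gap lies in your reconstruction of the citation rather than in the new content of the proposition; everything else in your write-up (the character counts, the Gauss-sum magnitudes, and the unitarity bookkeeping, which are mutually consistent) is correct.
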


\begin{rem}
In part (2) the coefficients for level 1 components together with level 0 components have $L^2$ norm $1$.

When $\pi$ is supercuspidal, one can actually get that $W^{(i)} (\alpha)$ consists of level $c-i$ components with coefficients of absolute value $\sqrt{\frac{p}{(p-1)^2p^{c-i-1}}}$. 
Counting the number of level $c-i$ characters, one can see that this is consistent with the $L^2$ norm as claimed.
This is however not necessarily true for induced representations.
\newline
\end{rem}

\begin{proof}
Let $\pi$ be a supercuspidal representation first.
 Then the only difference of this result from Corollary 2.18 in \cite{YH14} is the claim about the coefficients. Part (2)  can be easily proved using Lemma \ref{lemofGaussint} and (\ref{absolutevalueofC}).
Part (3) and (4) follow simply from the invariance of the unitary pairing (\ref{localnormalization2}).

Now let $\pi=\pi(\mu_1,\mu_2)$ where $\mu_1$, $\mu_2$ are both ramified of level $k=c/2$. 
The proof refines that of Lemma 4.2 in \cite{YH14} by using Lemma \ref{lemoflocalintofcharacters} above. For conciseness we will only prove part (4) when $i=k>1$. So
\begin{equation}
W^{(k)}(\alpha)=C_0^{-1}\int\limits_{v(u)\leq -k,u\notin \varpi^{-k}(-1+\varpi O_F)}\mu_1(-\frac{\varpi^{k}}{1+u\varpi^{k}})\mu_2(-\alpha u)\psi(-\alpha u)p^{-\frac{1}{2}v(\alpha)+v(u)}du.
\end{equation}
Let $\nu$ be an multiplicative unitary character. 
\begin{align}
&\text{\ \ \ \ }\int\limits_{v(\alpha)\text{\ fixed}}W^{(k)}(\alpha)\nu(\alpha)d^*\alpha\\
&=C_0^{-1}\int\limits_{v(u)\leq -k,u\notin \varpi^{-k}(-1+\varpi O_F)}(\int\limits_{v(\alpha)\text{\ fixed}} \nu(\alpha u)\mu_2(-\alpha u)\psi(-\alpha u)d^*\alpha)
\mu_1(-\frac{\varpi^{k}}{1+u\varpi^{k}})\nu^{-1}(u)p^{-\frac{1}{2}v(\alpha)+v(u)}du.\notag
\end{align}
For each fixed $v(u)$, the integral $\int\limits_{v(\alpha)\text{\ fixed}} \nu(\alpha u)\mu_2(-\alpha u)\psi(-\alpha u)d^*\alpha$ is actually independent of $u$ by a change of variable. Then by Lemma \ref{lemoflocalintofcharacters}, the integral in $u$ for fixed $v(u)$ is not zero only for $\nu$ of level $k$. 

Then as functions in $\alpha$, $\nu(\alpha u)\mu_2(-\alpha u)$ is of level $\leq k$, $\psi(-\alpha u)$ is of level $-v(\alpha)-v(u)\geq k-v(\alpha)$. Then $v(\alpha)\geq 0$ for the integral $\int\limits_{v(\alpha)\text{\ fixed}} \nu(\alpha u)\mu_2(-\alpha u)\psi(-\alpha u)d^*\alpha$ to be possibly nonzero. 

The claim about the $L^2$ norm of the coefficients follows from the invariance of the unitary pairing (\ref{localnormalization2}) as in the supercuspidal case.
\end{proof}
We can say something more for the case $i=c/2$. When $\pi$ is supercuspidal, $W^{(i)}$ is supported at $v(\alpha)\geq 0$, but it will also be bounded from above. This is however not true for induced representations.
\begin{lem}\label{lemofgrowthofWmidlevel}
Let $\pi=\pi(\mu_1,\mu_2)$ be an induced representation where both $\mu_i$ are ramified of level $k$. Then
\begin{equation}
W^{(k)}(a)<<_k p^{(\alpha-1/2)v(a)}v(a).
\end{equation}
Here $\alpha$ is a bound towards Ramanujan conjecture and we can pick $\alpha\leq 7/64$.
\end{lem}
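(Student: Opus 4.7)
The plan is to estimate directly the explicit integral formula for $W^{(k)}(a)$ recorded immediately before the lemma statement. Since by Proposition \ref{propWiofram}(4) we know $W^{(k)}(a)$ is supported on $v(a) \geq 0$, fix such an $a$. I would stratify the $u$-integration domain $\{v(u) \leq -k,\ u \notin \varpi^{-k}(-1+\varpi O_F)\}$ by setting $v(u) = -k-m$, writing
\begin{equation*}
W^{(k)}(a) = C_0^{-1} p^{-v(a)/2} \sum_{m \geq 0} I_m(a),
\end{equation*}
the exclusion near $-\varpi^{-k}$ affecting only the $m = 0$ stratum. For $m \geq 1$ the substitution $u = \varpi^{-k-m}\tilde u$ with $\tilde u \in O_F^*$, together with the factorization
\begin{equation*}
\mu_1\bigl(-\tfrac{\varpi^{k}}{1+u\varpi^{k}}\bigr) = \mu_1(-\varpi^{k+m})\,\mu_1(\tilde{u})^{-1}\,\mu_1\bigl(\tfrac{1}{1+\varpi^{m}/\tilde{u}}\bigr),
\end{equation*}
reduces each $I_m(a)$ to a prefactor of size $\ll p^{\alpha v(a)}$ (coming from the possible complementary-series exponents of $\mu_1,\mu_2$, where $|\mu_i(\varpi)| = p^{-s_i}$ with $|s_i| \leq \alpha$) times the oscillatory integral
\begin{equation*}
\int_{O_F^*} (\mu_2/\mu_1)(\tilde u)\,\mu_1\bigl(\tfrac{1}{1+\varpi^m/\tilde u}\bigr)\,\psi(-a\varpi^{-k-m}\tilde u)\,d\tilde u.
\end{equation*}
The $m = 0$ contribution is handled analogously by parametrizing the annulus via $t = 1 + u\varpi^k \in O_F^*\setminus(1+\varpi O_F)$.

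Next I would apply Lemma \ref{lemofGaussint} and Lemma \ref{lemoflocalintofcharacters} to determine when this inner integral is nonzero. The key is the matching between the level of the multiplicative character $(\mu_2/\mu_1)(\tilde u)\cdot \mu_1(1/(1+\varpi^m/\tilde u))$ on $O_F^*$ and the level $k+m-v(a)$ of the additive character $\psi(-a\varpi^{-k-m}\,\cdot)$. When $m \geq k$ the $\mu_1$-factor is identically $1$, and the matching forces a unique value of $m$ (depending on the level $L$ of $\mu_2/\mu_1$) for which $I_m(a) \neq 0$. When $0 \leq m < k$ the factor $\mu_1(1/(1+\varpi^m/\tilde u))$ has level $k-m$ in $\tilde u$, and the level-matching constraint restricts the surviving $m$ to a range of size $O(v(a))$. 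On each surviving stratum, the Gauss-sum bound of Lemma \ref{lemofGaussint} together with $|C_0|=1$ gives $|I_m(a)| \ll p^{\alpha v(a)}$.

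Summing over the $O(v(a))$ surviving strata and combining with the prefactor $p^{-v(a)/2}$ yields the stated bound $W^{(k)}(a) \ll_k p^{(\alpha-1/2)v(a)} v(a)$. The main obstacle lies in the mid-range $0 \leq m < k$: the effective level of the product character $(\mu_2/\mu_1)(\tilde u)\cdot\mu_1(1/(1+\varpi^m/\tilde u))$ can drop below the expected $\max(L, k-m)$ due to cancellation between the two factors, which risks spoiling the Gauss-sum bound or enlarging the range of surviving $m$. A careful case analysis on whether $\mu_1/\mu_2$ is ramified or unramified, together with the level relations of Lemma \ref{lemoflocalintofcharacters}, is needed to verify that the Gauss-sum estimate survives uniformly, and in particular to confirm the logarithmic $v(a)$ factor, which is sharp precisely in the degenerate subcase (essentially when $\mu_1/\mu_2$ is unramified) where multiple values of $m$ contribute with the same size.
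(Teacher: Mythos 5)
Your plan is the paper's own argument read ``pointwise'' rather than ``dually'': the paper computes $\int_{v(a)\ \mathrm{fixed}}W^{(k)}(a)\nu(a)\,d^*a$, swaps the order of integration so that the inner ($a$-)integral immediately forces $\nu$ to be essentially $\mu_2^{-1}$ (on the range $-2k\le v(u)\le -k$, where $\psi(-au)$ is trivial) or essentially $\mu_1^{-1}$ (on $v(u)<-2k$), and then bounds the two resulting pieces $I_1,I_2$; your stratification by $v(u)=-k-m$ and your split at $m=k$ is exactly the paper's split at $v(u)=-2k$, and both proofs run on Lemma \ref{lemofGaussint} and Lemma \ref{lemoflocalintofcharacters}. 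The dual formulation buys a cleaner case analysis (at most two characters $\nu$ survive, each a genuine Gauss sum in $a$); your direct route is workable but forces you to handle the non-multiplicative factor $\mu_1(1/(1+\varpi^m/\tilde u))$ inside the $\tilde u$-integral.

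There is, however, a concrete error in your case analysis: the dichotomy is backwards. In the range $0\le m<k$ there are only $k+1$ strata, so since the implied constant may depend on $k$ the \emph{trivial} bound $|I_m(a)|\ll_k p^{\alpha v(a)}$ per stratum already suffices --- the cancellation you flag as the ``main obstacle'' there is harmless, and this range never produces a factor $v(a)$. The $O(v(a))$ multiplicity lives entirely in $m> k$, where your claim that ``the matching forces a unique value of $m$'' is false precisely when $\mu_2/\mu_1$ is unramified: there the multiplicative character in $\tilde u$ is trivial, and the integral $\int_{O_F^*}\psi(-a\varpi^{-k-m}\tilde u)\,d\tilde u$ is nonzero for \emph{every} $m$ with $k+m-v(a)\le 1$, i.e.\ for all $k< m\le v(a)-k+1$ --- roughly $v(a)$ strata, each of size $\asymp p^{\alpha v(a)}$ (this is exactly the paper's bound $I_2\ll_k p^{(\alpha-1/2)v(a)}v(a)$, versus $I_2\ll_k p^{(\alpha-1/2)v(a)}$ when $\mu_1/\mu_2$ is ramified of level $j$, where only $v(u)=-v(a)-j$ survives). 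Your closing sentence shows you know the unramified case is the degenerate one, but the body of the argument attributes the factor $v(a)$ to the wrong range of $m$; as literally written the proof would not produce the stated bound. Swap the two conclusions and the argument goes through.
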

\begin{proof}
Let $\nu$ be a character of the local field which is trivial on a fixed uniformizer. As in the proof of the last lemma,
\begin{align}
&\text{\ \ \ \ }\int\limits_{v(a)\text{\ fixed}}W^{(k)}(a)\nu(a)d^*a\\
&=C_0^{-1}\int\limits_{v(u)\leq -k,u\notin \varpi^{-k}(-1+\varpi O_F)}(\int\limits_{v(a)\text{\ fixed}} \nu(a u)\mu_2(-a u)\psi(-a u)d^*a)
\mu_1(-\frac{\varpi^{k}}{1+u\varpi^{k}})\nu^{-1}(u)p^{-\frac{1}{2}v(a)+v(u)}du.\notag
\end{align}
When $v(a)>2k$, we can further separate the integral above into two parts:
\begin{equation}\label{formulaasym2.1}
 I_1=\int\limits_{-2k\leq v(u)\leq -k,u\notin \varpi^{-k}(-1+\varpi O_F)}(\int\limits_{v(a)\text{\ fixed}} \nu(a u)\mu_2(-a u)d^*a)
\mu_1(-\frac{\varpi^{k}}{1+u\varpi^{k}})\nu^{-1}(u)p^{-\frac{1}{2}v(a)+v(u)}du,
\end{equation}
and
\begin{equation}\label{formulaasym2.2}
 I_2=\int\limits_{v(u)<-2k}(\int\limits_{v(a)\text{\ fixed}} \nu(a u)\mu_2(-a u)\psi(-a u)d^*a)
\mu_1(-\frac{1}{u})\nu^{-1}(u)p^{-\frac{1}{2}v(a)+v(u)}du.
\end{equation}
Note that the first integral will be non-zero only if $\nu$ is essentially $\mu_2^{-1}$. (This means they are identical on units, but can differ on a uniformizer.)
Then it's clear that for $-2k\leq v(u)\leq -k$,

\begin{equation}
 \int\limits_{v(a)\text{\ fixed}} \nu(a u)\mu_2(-a u)d^*a<<_kp^{\alpha v(a)}   \text{,\ \ \ } I_1<<_k p^{(\alpha-\frac{1}{2})v(a)}.
\end{equation}

The second integral is non-zero only if $\nu$ is essentially $\mu_1^{-1}$. Then we look at 
\begin{equation}
\int\limits_{v(a)\text{\ fixed}} \nu(a u)\mu_2(-a u)\psi(-a u)d^*a.
\end{equation}
If $\mu=\frac{\mu_1}{\mu_2}$ is unramified, then the non-zero contribution comes from $v(u)\geq -v(a)-1$, and
\begin{equation}
 I_2<<_kp^{(\alpha-1/2)v(a)}v(a).
\end{equation}
When $\mu$ is of level $j$, then the non-zero contribution comes from $v(u)=-v(a)-j$ (so that $\psi(a u)$ is of level $j$). Then 
\begin{equation}
 I_2<<_k p^{(\alpha-\frac{1}{2})v(a)}.
\end{equation}
Combining the bounds for $I_1$ $I_2$ and the restriction for $\nu$, the claim in the lemma is then clear.
\end{proof}

\section{The first inequality when testing on Cusp froms}
In this section we will prove Theorem \ref{thmoffirstineq} when $\phi$ is a Maass eigencuspform. In this case $\mu(\phi)=0$ directly. So we just need to prove the same inequality for $\mu_f(\phi)$.

The idea is to adelize $\phi$, $f$ and $f'=\bar{f}y^k$. Then $\mu_f(\phi)$ becomes an automorphic integral as in the triple product formula. Using (\ref{Globaltriple}), it would be enough to apply the weak subconvexity bound for the triple product L-function (see \cite{So10}), and give a reasonable upper bound for the normalized local integrals.

We will start with necessary tools to give upper bound for the local integrals. Section 3.1-3.3 will be purely local, so we will omit subscript $v$ without confusion.
\subsection{Matrix coefficient for highly ramified representations at non-archimedean places}
Locally let $\pi$ be a supercuspidal representation of level $c$ or of form $\pi(\mu_1,\mu_2)$ where $\mu_1$ and $\mu_2$ are both ramified of level $k=c/2$. Let $\varphi$ be a new form for $\pi$ which is invariant under $K_1(\varpi^{c})$
Let
\begin{equation}
\Phi(g)=<\pi(g)\varphi,\varphi>
\end{equation}
be the matrix coefficient associated to $\varphi$.
It is bi-$K_1(\varpi^{c})-$invariant. But we will only make use of the right $K_1(\varpi^{c})-$invariance now. 
By Lemma \ref{Iwasawadecomp}, to understand $\Phi(g)$, it will be enough to understand $\Phi(\zxz{a}{m}{0}{1}\zxz{1}{0}{\varpi^i}{1})$ for $0\leq i\leq c$.
Let $p=|\varpi|$. The following result is a refinement of Lemma 4.2 in \cite{YH14}.
\begin{prop}\label{propofsupportofMC}
Let $\Phi$ be as in the above notation.
\begin{enumerate}
\item[(i)] For $1<c-1\leq i\leq c$, $\Phi(\zxz{a}{m}{0}{1}\zxz{1}{0}{\varpi^i}{1})$ is supported on $v(a)=0$ and $v(m)\geq -1$. On the support, we have
\begin{equation}\label{matrixcoefforsceq1}
\Phi(\zxz{a}{m}{0}{1}\zxz{1}{0}{\varpi^i}{1})=\begin{cases}
1,&\text{\ if\ }v(m)\geq 0 \text{\ and\ }i=c;\\
-\frac{1}{p-1},&\text{\ if\ }v(m)=-1 \text{\ and\ }i=c;\\
-\frac{1}{p-1},&\text{\ if\ }v(m)\geq 0 \text{\ and\ }i=c-1.\\
\end{cases}
\end{equation}
When $v(a)=0$, $v(m)=-1$ and $i=c-1>1$, 
$\Phi(\zxz{a}{m}{0}{1}\zxz{1}{0}{\varpi^{c-1}}{1})$ consists of level 1 component with coefficients of $L^2$ norm $\frac{p\sqrt{p-2}}{(p-1)^2}$, and also level 0 component with coefficient $\frac{1}{(p-1)^2}$.
\item[(ii)] For $0\leq i<c-1$, $i\neq c/2$, $\Phi(\zxz{a}{m}{0}{1}\zxz{1}{0}{\varpi^i}{1})$ is supported on $v(a)=\min\{0,2i-c\}$, $v(m)=i-c$. As a function in $a$ it consists of level $c-i$ components with coefficients of $L^2$ norm $\sqrt{\frac{p}{(p-1)^2p^{c-i-1}}}$.
\item[(iii)]
When $c$ is even and $i=c/2>1$, $\Phi(\zxz{a}{m}{0}{1}\zxz{1}{0}{\varpi^i}{1})$ is supported on $v(a)\geq 0$, $v(m)=-c/2$. As a function in $a$ it consists of level $c-i$ components with coefficients of $L^2$ norm $\sqrt{\frac{p}{(p-1)^2p^{c/2-1}}}$.

\noindent
When $i=c/2=1$, $\Phi(\zxz{a}{m}{0}{1}\zxz{1}{0}{\varpi^i}{1})$ is supported on $v(a)\geq 0$, $v(m)\geq -1$. When $v(m)\geq 0$, its value is as in (i). When $v(m)=-1$, as a function in $a$ it consists of  level 0 component at $v(a)=0$ with coefficient $\frac{1}{(p-1)^2}$, and level $1$ components at $v(a)\geq 0$ with coefficients of $L^2$ norm $\frac{p\sqrt{p-2}}{(p-1)^2}$.
\end{enumerate}
\end{prop}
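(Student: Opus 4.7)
The plan is to realise $\Phi$ via the Whittaker model and reduce to the already-established description of the shifted Whittaker function $W^{(i)}$ from Proposition \ref{propWiofram}. Setting $h=\zxz{a}{m}{0}{1}\zxz{1}{0}{\varpi^{i}}{1}$, the unitary pairing (\ref{localnormalization2}) gives
\begin{equation*}
\Phi(h)=\int_{\F^{*}}W\!\left(\zxz{\alpha}{0}{0}{1}h\right)\overline{W\zxz{\alpha}{0}{0}{1}}\,d^{*}\alpha.
\end{equation*}
Right $K_{1}(\varpi^{c})$-invariance lets me replace $W\zxz{\alpha}{0}{0}{1}$ by $W^{(c)}(\alpha)=\charf_{1,0}(\alpha)$ (Proposition \ref{propWiofram}(1)), while the identity $\zxz{\alpha}{0}{0}{1}\zxz{a}{m}{0}{1}=\zxz{1}{\alpha m}{0}{1}\zxz{\alpha a}{0}{0}{1}$ together with the Whittaker transformation property collapses the integrand to
\begin{equation*}
\Phi(h)=\int_{v(\alpha)=0}\psi(\alpha m)\,W^{(i)}(\alpha a)\,d^{*}\alpha.
\end{equation*}
This identity reduces the whole statement to a case-by-case analysis of the right-hand side.

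For each case I would expand $W^{(i)}(\alpha a)$ into its level components using Proposition \ref{propWiofram}: for level-$(c-i)$ characters $\nu$ (together with the trivial character in the exceptional cases $i=c$, $i=c-1$, or $i=c/2=1$), the summand $a_{\nu}\,\nu(\alpha a)\,\charf_{v(\alpha a)=n_{i}}$ with $n_{i}=\min\{0,2i-c\}$ produces, after the $\alpha$-integral, a factor $\nu(a)$ times the Gauss sum $\int_{v(\alpha)=0}\psi(\alpha m)\nu(\alpha)d^{*}\alpha$. By Lemma \ref{lemofGaussint} this Gauss sum vanishes unless $v(m)=-\mathrm{level}(\nu)$, and when nonzero has absolute value $\sqrt{p/((p-1)^{2}p^{\mathrm{level}(\nu)-1})}$. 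Combined with the $L^{2}$-norm statements of Proposition \ref{propWiofram}, this immediately yields the support in $m$, the support in $a$, and the claimed $L^{2}$-norms of the coefficients as functions of $a$.

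The only real bookkeeping lies in case (i) with $i=c-1$, $v(m)=-1$ and the parallel case (iii) with $i=c/2=1$, $v(m)=-1$, where both the trivial component and the level 1 components of $W^{(c-1)}$ (respectively $W^{(1)}$) contribute. The trivial component has coefficient $-1/(p-1)$, and integrating it against $\psi(\alpha m)$ over units (which contributes another $-1/(p-1)$) produces the stated constant $1/(p-1)^{2}$. The level 1 block has $L^{2}$-norm $\sqrt{p(p-2)/(p-1)^{2}}$, and multiplication by the level 1 Gauss sum of absolute value $\sqrt{p/(p-1)^{2}}$ rescales it to $p\sqrt{p-2}/(p-1)^{2}$, matching the statement. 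I expect no serious obstacle beyond careful tracking of these Gauss-sum normalization constants across levels.
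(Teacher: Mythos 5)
Your proposal is correct and follows essentially the same route as the paper: reduce $\Phi$ via the Whittaker pairing and the $K_1(\varpi^c)$-invariance to $\int_{v(\alpha)=0}\psi(m\alpha)W^{(i)}(a\alpha)\,d^*\alpha$, then read off support and coefficients from Proposition \ref{propWiofram} together with the Gauss-sum evaluation of Lemma \ref{lemofGaussint}. The paper writes out only part (ii) in detail and leaves the exceptional cases (which you track explicitly, with the correct constants $1/(p-1)^2$ and $p\sqrt{p-2}/(p-1)^2$) to the same mechanism, so there is nothing substantively different to reconcile.
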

\begin{rem}
 The second part of (iii) looks like a combination of (i) and the first part of (iii). This case would not make essential difference for, for example, the bound of local integral of triple product formula in Proposition \ref{propoflocalboundI} and \ref{propoflocalboundII}.
\end{rem}

\begin{proof}
By definition,
\begin{align}\label{formulaofMC}
\Phi(\zxz{a}{m}{0}{1}\zxz{1}{0}{\varpi^i}{1})&=\int_{\F_v^*}\pi(\zxz{a}{m}{0}{1}\zxz{1}{0}{\varpi^i}{1}) W(\zxz{\alpha}{0}{0}{1})\overline{W(\zxz{\alpha}{0}{0}{1})}d^*\alpha\\
&=\int\limits_{\F_v^*}\psi(m\alpha)W^{(i)}(a\alpha)\overline{W^{(c)}}(\alpha)d^*\alpha\notag\\
&=\int\limits_{v(\alpha)=0}\psi(m\alpha)W^{(i)}(a\alpha)d^*\alpha\notag
\end{align}

To get a non-zero value for $\Phi$, we just need a level 0 component supported at $v(\alpha)=0$ for $\psi(m\alpha)W^{(i)}(a\alpha)$. Then the claims follow from Proposition \ref{propWiofram}. For conciseness we will only prove part (ii) here. 

Let $i<c-1$, $i\neq c/2$. According to part (3) of Proposition \ref{propWiofram}, $W^{(i)}(x)$ is supported at $v(x)=\min\{0, 2i-c\}$. So (\ref{formulaofMC}) is not zero only if $v(a)=\min\{0, 2i-c\}$. We further know that $W^{(i)}(a\alpha)$ consists only of level $c-i$ characters in $\alpha$. Then to get level 0 component for the product $\psi(m\alpha)W^{(i)}(a\alpha)$ at $v(\alpha)=0$, we need $v(m)=i-c$. 

Now suppose $i>c/2$ so that $W^{(i)}(x)$ is supported at $v(x)=0$. (The case when $i<c/2$ is very similar.)
If we write
\begin{equation}
W^{(i)}(x)=\sum\limits_{\nu \text{\ of level \ }c-i}c_\nu \nu(x)
\end{equation}
for $\alpha\in O_v^*$. Then
\begin{equation}
\Phi(\zxz{a}{m}{0}{1}\zxz{1}{0}{\varpi^i}{1})=\int\limits_{v(\alpha)=0}\psi(m\alpha)W^{(i)}(a\alpha)d^*\alpha=\sum\limits_{\nu \text{\ of level \ }c-i}c_\nu (\int\limits_{v(\alpha)=0}\psi(m\alpha)\nu(\alpha)d^*\alpha) \nu(a).
\end{equation}
So as a function in $a$, $\Phi(\zxz{a}{m}{0}{1}\zxz{1}{0}{\varpi^i}{1})$ consists of level $c-i$ components with coefficients of $L^2$ norm $\sqrt{\frac{p}{(p-1)^2p^{c-i-1}}}$. This is because the sequence $\{c_\nu\}$ is of $L^2$ norm $1$ and 
\begin{equation*}
|\int\limits_{v(\alpha)=0}\psi(m\alpha)\nu(\alpha)d^*\alpha|=\sqrt{\frac{p}{(p-1)^2p^{c-i-1}}}
\end{equation*}
for all $\nu$ of level $c-i$ at $v(m)=i-c$.

One can prove the other parts similarly. In particular (i) follows from (1)(2) of Proposition \ref{propWiofram}, and (iii) follows from (4) of Proposition \ref{propWiofram}.
\end{proof}

\begin{cor}\label{corMCofoldform}
 Let $\widetilde{\Phi}$ be the matrix coefficient associated to $\pi(\zxz{\varpi^{-n}}{0}{0}{1})\varphi$, where $\varphi$ is still a new form. Then $\widetilde{\Phi}$ is right $K_1(\varpi^{c+n})-$invariant, and
 $\widetilde{\Phi}(\zxz{a}{m}{0}{1}\zxz{1}{0}{\varpi^i}{1})$ is supported at $v(m)\geq -n-1$ for $i=c+n \text{\ or\ }c+n-1$, and $v(m)=i-2n-c$ for $i<c+n-1$.
\end{cor}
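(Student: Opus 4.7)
The natural approach is a conjugation trick. Set $t = \zxz{\varpi^{-n}}{0}{0}{1}$. Since $\pi$ is unitary, the invariance of the pairing gives
\[
\widetilde{\Phi}(g) = \langle\pi(g)\pi(t)\varphi, \pi(t)\varphi\rangle = \langle\pi(t^{-1}gt)\varphi, \varphi\rangle = \Phi(t^{-1}gt),
\]
so the corollary will follow from Proposition \ref{propofsupportofMC} once $t^{-1}gt$ is put into standard Iwasawa form. The $K_1(\varpi^{c+n})$-invariance is immediate from the identity
\[
t^{-1}\zxz{\alpha}{\beta}{\gamma}{\delta}t = \zxz{\alpha}{\varpi^n\beta}{\varpi^{-n}\gamma}{\delta},
\]
which carries $K_1(\varpi^{c+n})$ into $K_1(\varpi^c)$, combined with the right $K_1(\varpi^c)$-invariance of $\Phi$.

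A direct computation yields
\[
t^{-1}\zxz{a}{m}{0}{1}\zxz{1}{0}{\varpi^i}{1}t = \zxz{a}{\varpi^n m}{0}{1}\zxz{1}{0}{\varpi^{i-n}}{1}.
\]
When $i \geq n$, this is already in standard Iwasawa form with $a' = a$, $m' = \varpi^n m$, and $i' = i - n \in \{0, \ldots, c\}$, and Proposition \ref{propofsupportofMC} translates directly: the support $v(m') \geq -1$ for $i' \in \{c, c-1\}$ becomes $v(m) \geq -n-1$, while $v(m') = i' - c$ for $0 \leq i' < c-1$ becomes $v(m) = i - 2n - c$.

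The main obstacle is the case $i < n$, where $\varpi^{i-n}$ has negative valuation and the conjugated element falls outside the standard range. Setting $j = n - i > 0$, inspection of the bottom row $[\varpi^{-j}:1]$ shows $\zxz{1}{0}{\varpi^{-j}}{1}$ lies in the $i' = 0$ double coset $B\zxz{1}{0}{1}{1}K_1(\varpi^c)$; explicitly, with $k = \zxz{1}{\varpi^j-1}{0}{1} \in K_1(\varpi^c)$ one verifies
\[
t^{-1}gt \cdot k^{-1} = \zxz{a\varpi^j}{a(1-\varpi^j) + m\varpi^i}{0}{\varpi^{-j}}\zxz{1}{0}{1}{1}.
\]
Factoring out the central element $\varpi^{-j}$ and applying Proposition \ref{propofsupportofMC}(ii) at $i' = 0$ forces both $v(a\varpi^{2j}) = -c$ and $v(\varpi^j(a(1-\varpi^j) + m\varpi^i)) = -c$; the former gives $v(a) = 2i - 2n - c$, and the required cancellation against $a(1-\varpi^j)$ in the latter then forces $v(m) = i - 2n - c$, matching the claim.
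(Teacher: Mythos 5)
Your proposal is correct and follows essentially the same route as the paper: both reduce $\widetilde{\Phi}$ to $\Phi$ evaluated at the conjugate $\zxz{a}{m\varpi^n}{0}{1}\zxz{1}{0}{\varpi^{i-n}}{1}$, apply Proposition \ref{propofsupportofMC} directly when $i\geq n$, and for $i<n$ use the same Bruhat-type rewriting into the $i'=0$ coset (your identity with $k=\zxz{1}{\varpi^j-1}{0}{1}$ is exactly the paper's decomposition), with the valuation mismatch $v(a(\varpi^{j}-\varpi^{2j}))<-c$ forcing $v(m)=i-2n-c$. Your explicit verification of the $K_1(\varpi^{c+n})$-invariance via conjugation is a small addition the paper leaves implicit.
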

\begin{proof}
 Let $\Phi$ be the matrix coefficient associated to the new form as in Proposition \ref{propofsupportofMC}. Then
 \begin{equation}
  \widetilde{\Phi}(\zxz{a}{m}{0}{1}\zxz{1}{0}{\varpi^i}{1})=\Phi(\zxz{a}{m\varpi^n}{0}{1}\zxz{1}{0}{\varpi^{i-n}}{1}).
 \end{equation}
When $i\geq n$, we can use Proposition \ref{propofsupportofMC} directly to get $\widetilde{\Phi}$. When $i<n$, we have
\begin{equation}
 \zxz{a}{m\varpi^n}{0}{1}\zxz{1}{0}{\varpi^{i-n}}{1}=\varpi^{i-n}\zxz{a\varpi^{-2i+2n}}{a(\varpi^{-i+n}-\varpi^{-2i+2n})+m\varpi^{n}}{0}{1}\zxz{1}{0}{1}{1}\zxz{1}{-1+\varpi^{-i+n}}{0}{1}.
\end{equation}
Then by Proposition \ref{propofsupportofMC}, $\widetilde{\Phi}$ is nonzero only when $v(a\varpi^{-2i+2n})=-c$ and $v(a(\varpi^{-i+n}-\varpi^{-2i+2n})+m\varpi^{n})=-c$. Note that $v(a(\varpi^{-i+n}-\varpi^{-2i+2n}))=-c+i-n<-c$ in this case, which forces $v(m)=i-2n-c$.
\end{proof}

\begin{rem}
 Using the same proof, one can get more detailed descriptions of matrix coefficient of old forms as in Proposition \ref{propofsupportofMC}. But we won't go into details here.
\end{rem}

\subsection{Bound of local triple product integral I}
\begin{prop}\label{propoflocalboundI}
Let $\pi_1$ be highly ramified of level $c_1>1$ and $\pi_2$ $\pi_3$ be highly ramified of level $c>c_1$. Suppose that they all have trivial central characters. Let $\Phi_i$ be the normalized matrix coefficients associated to the new forms of $\pi_i$. 
Then
\begin{equation}
 |I_v|=|\int\limits_{\Q_v^*\backslash \GL_2^*(\Q_v)}\prod\limits_{j=1}^{3}\Phi_j(g)dg|\leq \frac{2}{(p+1)p^{c-1}}\frac{p^3-2p^2+1}{(p-1)^3}\leq 4p^{-c}.
\end{equation}
\end{prop}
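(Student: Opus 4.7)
The plan is to expand $I_v$ using Lemma \ref{localintcoefficient} applied at level $c$ (the common level of $\pi_2, \pi_3$). This writes
$$I_v = \sum_{i=0}^c A_i \int_{\Q_p^* \backslash B(\Q_p)} \Phi_1(b g_i)\Phi_2(b g_i)\Phi_3(b g_i)\,db,$$
where $g_i = \zxz{1}{0}{\varpi^i}{1}$ and $b = \zxz{a}{m}{0}{1}$. Proposition \ref{propofsupportofMC} directly describes $\Phi_2(b g_i)$ and $\Phi_3(b g_i)$. For $\Phi_1$, since $g_i \in K_1(\varpi^{c_1})$ whenever $i \geq c_1$, right $K_1(\varpi^{c_1})$-invariance gives $\Phi_1(b g_i) = \Phi_1(b) = \Phi_1(b g_{c_1})$, and the latter is again described by Proposition \ref{propofsupportofMC}(i) applied to $\pi_1$; in particular its support is $\{v(a)=0,\,v(m)\ge -1\}$, with the explicit values $1$ and $-1/(p-1)$ from (\ref{matrixcoefforsceq1}).

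The key reduction is to show that only $i \in \{c-1, c\}$ contribute. I would do this by intersecting the support conditions of the three factors case-by-case in $i$. For $c/2 \leq i \leq c-2$ with $i \geq c_1$, the factors $\Phi_{2,3}$ force $v(m) = i - c \leq -2$ (or $v(m) = -c/2 \leq -2$ when $i = c/2$, since $c \geq 3$), while $\Phi_1(b)$ forces $v(m) \geq -1$, giving empty support. For $i < c/2$, $\Phi_{2,3}$ force $v(a) = 2i - c < 0$, while $\Phi_1$ forces $v(a) = 0$ (when $i \geq c_1/2$) or $v(a) = 2i - c_1 \neq 2i - c$ (when $i < c_1/2$), again incompatible since $c \neq c_1$. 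The remaining edge cases $i = c_1 - 1$, $i = c_1/2$, and the sub-regime $c_1/2 < i < c_1$ (where $\Phi_1$ takes the form from (ii) or (iii) of Proposition \ref{propofsupportofMC}) are each ruled out by the same mismatch between $v(m)$ conditions or between $v(a)$ conditions.

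For the two surviving indices, I would compute as follows. At $i = c$, all three matrix coefficients take only values in $\{1, -1/(p-1)\}$, and using the normalized measures one gets exactly $A_c \cdot p(p-2)/(p-1)^2$. At $i = c-1$: on $v(m)\ge 0$ the values are again explicit and give $A_{c-1}/(p-1)^2$; on $v(m)=-1$ the subtle piece is that $\Phi_1(b) = -1/(p-1)$ is a constant in $a$, while each $\Phi_j(b g_{c-1})$ for $j = 2,3$ is, as a function of $a \in O^*$, a level-$0$ piece with coefficient $1/(p-1)^2$ plus level-$1$ characters whose coefficient vector has $L^2$ norm $p\sqrt{p-2}/(p-1)^2$. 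Orthogonality of characters on $O^*$ collapses the $a$-integral $\int_{v(a)=0}\Phi_2\Phi_3\,d^*a$ to $c_0^{(2)}c_0^{(3)} + \sum_{\nu}c_\nu^{(2)}c_{\nu^{-1}}^{(3)}$, and a Cauchy--Schwarz bound using the $L^2$ norms from Proposition \ref{propofsupportofMC} yields $|\int_{v(a)=0}\Phi_2\Phi_3\,d^*a|\le (p^3-2p^2+1)/(p-1)^4$. After multiplying by $|\Phi_1|=1/(p-1)$ and $\mathrm{vol}(v(m)=-1)=p-1$, the $i=c-1$ contribution is bounded by $A_{c-1}(p^2-2)/(p-1)^3$.

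Summing, using $A_c = 1/((p+1)p^{c-1})$ and $A_{c-1} = (p-1)/((p+1)p^{c-1})$, and simplifying with the factorization $p^3 - 2p^2 + 1 = (p-1)(p^2 - p - 1)$, recovers the claimed bound $\frac{2}{(p+1)p^{c-1}}\cdot\frac{p^3-2p^2+1}{(p-1)^3}$; the crude $4p^{-c}$ then follows from $(p^2-p-1)/(p-1)^2 \le 2$ for all $p \geq 2$ together with $(p+1)p^{c-1} \geq p^c$. The main obstacle is the $v(m)=-1$ piece at $i=c-1$, where neither $\Phi_2$ nor $\Phi_3$ reduces to a single scalar; controlling the interaction of their level-$1$ character expansions via Cauchy--Schwarz, with the precise $L^2$ norms from Proposition \ref{propofsupportofMC}, is what makes the bound sharp enough to produce the factor $4p^{-c}$.
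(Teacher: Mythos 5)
Your proposal is correct and takes essentially the same route as the paper's proof: the same decomposition via Lemma \ref{localintcoefficient}, the same use of Proposition \ref{propofsupportofMC} to show that only $i=c$ and $i=c-1$ contribute (the paper dispatches this in one line by noting the supports are disjoint, whereas you spell out the $v(a)$/$v(m)$ mismatches case by case), and the same Cauchy--Schwarz bound $\frac{p^3-2p^2+1}{(p-1)^4}$ on the level-$0$ component of $\Phi_2\Phi_3$ at $i=c-1$, $v(m)=-1$. Your final arithmetic, including the factorization $p^3-2p^2+1=(p-1)(p^2-p-1)$ and the reduction to $4p^{-c}$, matches the paper's bound exactly.
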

\begin{proof}
By Lemma \ref{Iwasawadecomp} and \ref{localintcoefficient}, we can decompose the integral as
\begin{equation}\label{triplelocalintI}
\sum\limits_{i=0}^{c}A_i\int\limits_{a,m}\prod\limits_{j=1}^{3}\Phi_j(\zxz{a}{m}{0}{1}\zxz{1}{0}{\varpi^i}{1})|a|^{-1}d^*a dm.
\end{equation}
By our assumption on $\pi_i$, Proposition \ref{propofsupportofMC} holds for $\Phi_i$. In particular when $i<c-1$, the support of $\Phi_1$ is disjoint from the support of $\Phi_2$ or $\Phi_3$. So one only need to consider $i=c$ or $c-1$ in (\ref{triplelocalintI}). When $i=c$, the corresponding integral is
\begin{equation}
\int\limits_{v(a)=0,v(m)\geq -1}\Phi_1(\zxz{a}{m}{0}{1})\Phi_2(\zxz{a}{m}{0}{1})\Phi_3(\zxz{a}{m}{0}{1})d^*a dm=1+(p-1)(-\frac{1}{(p-1)^3})=\frac{p^2-2p}{(p-1)^2}.
\end{equation}
When $i=c-1$ and $v(m)\geq 0$,
\begin{align}
&\text{\ \ \ }\int\limits_{v(a)=0,v(m)\geq 0}\Phi_1(\zxz{a}{m}{0}{1})\Phi_2(\zxz{a}{m}{0}{1}\zxz{1}{0}{\varpi^{c-1}}{1})\Phi_3(\zxz{a}{m}{0}{1}\zxz{1}{0}{\varpi^{c-1}}{1})d^*a dm=\frac{1}{(p-1)^2}. 
\end{align}
When $i=c-1$ and $v(m)=-1$,
\begin{align}\label{formula1}
&\text{\ \ \ }\int\limits_{v(a)=0,v(m)=-1}\Phi_1(\zxz{a}{m}{0}{1})\Phi_2(\zxz{a}{m}{0}{1}\zxz{1}{0}{\varpi^{c-1}}{1})\Phi_3(\zxz{a}{m}{0}{1}\zxz{1}{0}{\varpi^{c-1}}{1})d^*a dm\\
&=\int\limits_{v(a)=0,v(m)=-1}-\frac{1}{p-1}\Phi_2(\zxz{a}{m}{0}{1}\zxz{1}{0}{\varpi^{c-1}}{1})\Phi_3(\zxz{a}{m}{0}{1}\zxz{1}{0}{\varpi^{c-1}}{1})d^*a dm.\notag
\end{align}
When $i=c-1$, $v(m)=-1$ and $j=2,3$, $\Phi_j(\zxz{a}{m}{0}{1}\zxz{1}{0}{\varpi^{c-1}}{1})$ as functions in $a$ consist of level 1 components with coefficients of $L^2$ norm $\frac{p\sqrt{p-2}}{(p-1)^2}$, and also level 0 component with coefficient $\frac{1}{(p-1)^2}$. For fixed $m$, suppose we can write
\begin{equation}
\Phi_2(\zxz{a}{m}{0}{1}\zxz{1}{0}{\varpi^{c-1}}{1})=\sum\limits_{\nu \text{\ of level 1}} a_\nu\nu(a)+\frac{1}{(p-1)^2},
\end{equation}
and
\begin{equation}
\Phi_3(\zxz{a}{m}{0}{1}\zxz{1}{0}{\varpi^{c-1}}{1})=\sum\limits_{\nu \text{\ of level 1}} b_\nu\nu(a)+\frac{1}{(p-1)^2}.
\end{equation}
Then the coefficient of the level 0 component of the product $\Phi_2\Phi_3$ is 
\begin{equation}
\sum\limits_{\nu \text{\ of level 1}}a_\nu b_{\nu^{-1}}+\frac{1}{(p-1)^4}\leq\frac{p^2(p-2)}{(p-1)^4}+\frac{1}{(p-1)^4}=\frac{p^3-2p^2+1}{(p-1)^4}.
\end{equation}
Here we have used Cauchy-Schwartz inequality.
Then if we integrate in $a$ first for (\ref{formula1}), we can get
\begin{equation}
|\int\limits_{v(a)=0,v(m)=-1}-\frac{1}{p-1}\Phi_2(\zxz{a}{m}{0}{1}\zxz{1}{0}{\varpi^{c-1}}{1})\Phi_3(\zxz{a}{m}{0}{1}\zxz{1}{0}{\varpi^{c-1}}{1})d^*a dm|\leq \frac{p^3-2p^2+1}{(p-1)^4}.
\end{equation}
Now put all pieces together, we have
\begin{equation}
|I_v|\leq \frac{1}{(p+1)p^{c-1}}\frac{p^2-2p}{(p-1)^2}+\frac{p-1}{(p+1)p^{c-1}}[\frac{1}{(p-1)^2}+\frac{p^3-2p^2+1}{(p-1)^4}]\leq \frac{2}{(p+1)p^{c-1}}\frac{p^3-2p^2+1}{(p-1)^3}.
\end{equation}
Lastly 
\begin{equation}
 \frac{2}{(p+1)p^{c-1}}\frac{p^3-2p^2+1}{(p-1)^3}\leq 4p^{-c}
\end{equation}
as $p\geq 2$.
\end{proof}

\begin{prop}\label{propoflocalboundIold}
 Let $\pi_i,i=1,2,3$ and $\Phi_2$ $\Phi_3$ be as in Proposition \ref{propoflocalboundI}. Let $\Phi_1$ be the matrix coefficient associated to an old form $\pi_1(\zxz{\varpi^{-n}}{0}{0}{1})f_1$, where $f_1$ is still a new form. 
 Suppose that $c_1+2n<c$. Then 
 \begin{equation}
  |I_v|=|\int\limits_{\Q_v^*\backslash \GL_2^*(\Q_v)}\prod\limits_{j=1}^{3}\Phi_j(g)dg|\leq  \frac{4}{p^c}p^n.
 \end{equation}

\end{prop}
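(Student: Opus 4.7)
The plan is to follow the template of Proposition \ref{propoflocalboundI}, but tracking the old-form shift carefully. Since $\widetilde{\Phi}_1$ is right $K_1(\varpi^{c_1+n})$-invariant by Corollary \ref{corMCofoldform} and $c > c_1 + n$, the product $\widetilde{\Phi}_1 \Phi_2 \Phi_3$ is $K_1(\varpi^c)$-invariant, so Lemma \ref{Iwasawadecomp} and Lemma \ref{localintcoefficient} applied at level $c$ give
\begin{equation*}
I_v = \sum_{i=0}^{c} A_i \int_{a,m} \widetilde{\Phi}_1 \Phi_2 \Phi_3 \left(\zxz{a}{m}{0}{1}\zxz{1}{0}{\varpi^i}{1}\right) |a|^{-1} d^*a \, dm.
\end{equation*}
A direct computation shows that for $i \geq c_1+n$ one has $\widetilde{\Phi}_1\bigl(\zxz{a}{m}{0}{1}\zxz{1}{0}{\varpi^i}{1}\bigr) = \Phi_1\bigl(\zxz{a}{\varpi^n m}{0}{1}\bigr)$, reducing to the $i' = c_1$ case of Proposition \ref{propofsupportofMC}.

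The first key step is to pin down the indices $i$ for which the integrand is nonzero. By Corollary \ref{corMCofoldform} the support of $\widetilde{\Phi}_1$ in $m$ is $v(m)\geq -n-1$ when $i\geq c_1+n$ and concentrated at $v(m) = i - 2n - c_1$ when $i < c_1+n-1$; by Proposition \ref{propofsupportofMC} the support of $\Phi_2, \Phi_3$ in $m$ is $v(m)\geq -1$ for $i\in\{c,c-1\}$, $v(m) = -c/2$ for $i = c/2$, and $v(m) = i-c$ otherwise. Matching these conditions while using the hypothesis $c > c_1 + 2n$ shows that no $i \leq c-n-2$ contributes, so only $i \in \{c-n-1, c-n, \ldots, c\}$ survive.

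For each surviving $i$ I would then bound the inner integral. For $i \in \{c-1, c\}$, the value $\widetilde{\Phi}_1 = \Phi_1(\zxz{a}{\varpi^n m}{0}{1}) = 1$ on the relevant range (since $v(\varpi^n m) \geq 0$ whenever $v(m)\geq -1$ and $n\geq 1$), and the contribution is bounded exactly as in the proof of Proposition \ref{propoflocalboundI} by $O(p^{-c})$. For $c-n-1 \leq i \leq c-2$, $\widetilde{\Phi}_1$ is constant in $a$ (value $1$ or $-1/(p-1)$), so $\int_{v(a)=0} \widetilde{\Phi}_1 \Phi_2 \Phi_3 \, d^*a$ extracts the level $0$ component of the product $\Phi_2 \Phi_3$ of two level-$(c-i)$ functions on $O_v^*$. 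Cauchy--Schwartz against the $L^2$-norms $\sqrt{p/((p-1)^2 p^{c-i-1})}$ from Proposition \ref{propofsupportofMC}(ii), combined with the $m$-measure $(p-1)p^{c-i-1}$ on $v(m) = i-c$, yields an $a,m$-integral of size $\leq p/(p-1)$ per term; multiplication by $A_i = (p-1)/((p+1)p^i)$ then bounds the $i$-th contribution by $1/((p+1)p^{i-1})$ (with an extra $1/(p-1)$ when $i = c-n-1$).

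Finally, summing the geometric series $\sum_{i=c-n}^{c-2} p^{-(i-1)}$ and combining with the $i=c-n-1$ boundary term gives the dominant contribution $\frac{2}{(p+1)(p-1)p^{c-n-2}}$, which for $p\geq 2$ is bounded by $4 p^{n-c}$ as required. The main obstacle is the combinatorial bookkeeping in the second step: one must use $c > c_1 + 2n$ to rule out all coincidental matches between the supports in Corollary \ref{corMCofoldform} (especially the subtle cases $i < n$ and $i - n = c_1/2$) and the various support regimes of $\Phi_2,\Phi_3$ (notably $i = c/2$), since any such accidental match would introduce a new range of $i$ that would have to be bounded separately and could spoil the clean factor $p^n$.
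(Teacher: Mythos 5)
Your proposal is correct and follows essentially the same route as the paper's (much terser) proof: apply Corollary \ref{corMCofoldform} to locate the support of the old-form matrix coefficient in $m$, use $c_1+2n<c$ to show disjointness of supports for $i<c-n-1$, observe that $\widetilde{\Phi}_1$ is level $0$ in $a$ on the surviving range, and extract the level-$0$ component of $\Phi_2\Phi_3$ via Cauchy--Schwartz as in Proposition \ref{propoflocalboundI}. Your write-up actually supplies more of the bookkeeping than the paper does; the only caveat is that the final numerical constant is tight for small $p$, but the paper itself notes the bound is loose and the constant is immaterial since it only affects the finitely many places dividing the fixed level $N$.
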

\begin{proof}
 In this case we apply Corollary \ref{corMCofoldform} to $\Phi_1$. By the assumption that $c_1+2n<c$, the support of $\Phi_1$ in $m$ will still be disjoint from the support of $\Phi_2$ and $\Phi_3$ when $i<c-n-1$. When $i\geq c-n-1$, $\Phi_1$ will be of level 0 in $a$.
 Then one  can look at the level 0 components of $\Phi_2\Phi_3$ on the support and use Cauchy-Schwartz inequality as in the proof of previous proposition. 
 \end{proof}
\begin{rem}
The bound is a little loose but will serve the purpose. 
\end{rem}

\subsection{Bound of local triple product integral II}
In this subsection we consider the case when $\pi_1$ is an unramified special representation, that is, an unramified twist of steinberg representation.
\begin{prop}\label{propoflocalboundII}
Let $\pi_1$ be an unramified special representation and $\pi_2$ $\pi_3$ be highly ramified of level $c>1$. Suppose that they all have trivial central characters. Let $\Phi_i$ be the normalized matrix coefficients associated to the new forms of $\pi_i$. 
Then
\begin{equation}
 |I_v|=|\int\limits_{\Q_v^*\backslash \GL_2^*(\Q_v)}\prod\limits_{j=1}^{3}\Phi_j(g)dg|\leq 4p^{-c}.
\end{equation}
\end{prop}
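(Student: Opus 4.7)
The plan is to follow the proof of Proposition \ref{propoflocalboundI}, decomposing the integral via the Iwasawa decomposition (Lemmas \ref{Iwasawadecomp} and \ref{localintcoefficient}) at the larger level $c$ of $\pi_2$ and $\pi_3$:
\[
I_v = \sum_{i=0}^{c} A_i \int \prod_{j=1}^3 \Phi_j\!\left(\zxz{a}{m}{0}{1}\zxz{1}{0}{\varpi^i}{1}\right) |a|^{-1}\, d^*a\, dm.
\]
Three facts about $\Phi_1$ replace the role played in Proposition \ref{propoflocalboundI} by the statement that $\Phi_1$ has level $c_1$. First, $\Phi_1$ is bi-$K_0(\varpi)$-invariant since $\pi_1$ has level $1$ and trivial central character. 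Second, for every $i \geq 1$, $\zxz{1}{0}{\varpi^i}{1} \in K_0(\varpi)$, so $\Phi_1(\zxz{a}{m}{0}{1}\zxz{1}{0}{\varpi^i}{1}) = \Phi_1(\zxz{a}{m}{0}{1})$, and this value depends only on $v(a)$ and $v(m)$. Third, an explicit Kirillov-model computation for the Steinberg new form gives $\Phi_1(\zxz{a}{m}{0}{1}) = 1$ when $v(a)=0$ and $v(m)\geq 0$, $= -1/p$ when $v(a)=0$ and $v(m)=-1$, and more generally $|\Phi_1(\zxz{a}{m}{0}{1})| \asymp p^{2v(m)+1-v(a)}$ for $v(a) \geq 0$ and $v(m) \leq -1$.

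I then combine these with Proposition \ref{propofsupportofMC} to restrict the support of $\Phi_2 \Phi_3$ for each $i$ and bound the contribution. The cases $i = c$ and $i = c-1$ are handled as in Proposition \ref{propoflocalboundI}: on $v(m) \geq 0$ the values of $\Phi_2\Phi_3$ are the explicit constants from part (i), and on $v(m) = -1$ the level-$0$ component of $\Phi_2 \Phi_3$ is bounded using Cauchy--Schwarz on the level-$1$ character expansions (of $L^2$ norm $\tfrac{p\sqrt{p-2}}{(p-1)^2}$). Multiplying by $\Phi_1 \in \{1, -1/p\}$ and by $A_c, A_{c-1} = O(p^{-(c-1)})$ yields a contribution of order $p^{-c}$. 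For $0 < i < c-1$ with $i \neq c/2$, Proposition \ref{propofsupportofMC}(ii) forces $v(m) = i - c \leq -2$, where $|\Phi_1| \asymp p^{2(i-c)+1-v(a)}$. Using the $L^2$-bound $\sqrt{p/((p-1)^2 p^{c-i-1})}$ on the level-$(c-i)$ coefficients of $\Phi_j$ and Cauchy--Schwarz slice by slice gives $A_i |J_i| \ll p^{i-2c+2}$, which sums to $O(p^{-c})$. For $i = 0$, the matrix $\zxz{a}{m}{0}{1}\zxz{1}{0}{1}{1}$ lies in the open Bruhat cell; combining $|\Phi_1| \leq 1$ there with the narrow support forced by Proposition \ref{propofsupportofMC}(ii) ($v(a) = v(m) = -c$) and the $L^2$ norm of level-$c$ character coefficients yields a contribution $\ll p^{-c}$.

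The main obstacle is the middle case $i = c/2$, which (unlike in Proposition \ref{propoflocalboundI}) is not eliminated by support disjointness since Steinberg's matrix coefficient is nowhere vanishing, and the $a$-support $v(a) \geq 0$ is unbounded. The key input is the decay $|\Phi_1(\zxz{a}{m}{0}{1})| \asymp p^{-c+1-v(a)}$ at $v(m) = -c/2$: the measure $|a|^{-1} d^*a = p^{v(a)} d^*a$ exactly cancels one factor of the decay, leaving a geometric sum in $v(a)$ that is controlled by a second Cauchy--Schwarz applied across slices of $v(a)$ to the slice-wise $L^2$-norms of the level-$c/2$ coefficients of $\Phi_2$ and $\Phi_3$ (whose total squared $L^2$ norm is the quantity in Proposition \ref{propofsupportofMC}(iii)). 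This yields $A_{c/2} J_{c/2} \ll p^{-c}$. Summing all contributions and using $p \geq 2$ for the numerical constant gives $|I_v| \leq 4 p^{-c}$, as claimed.
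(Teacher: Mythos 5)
Your proposal is correct and follows essentially the same route as the paper: the same Iwasawa/Bruhat decomposition into the cases $i=c,\,c-1$, the middle range, $i=c/2$, and $i=0$, with the explicit Steinberg matrix coefficient (the paper quotes it from Lemma \ref{lemofMCforSpecial} rather than recomputing it in the Kirillov model, but the values agree, including the cancellation of $|\Phi_1|\asymp p^{1-c-v(a)}$ against the Haar-measure factor $p^{v(a)}$ at $i=c/2$). The only substantive difference is at $i=0$, where the paper exploits that $\Phi_1$ depends on $a$ only through $v(a+m)$ (hence has only level $\le 1$ components) and pairs it with the level $0$ and $1$ components of $\Phi_2\Phi_3$, while you use the cruder $|\Phi_1|\le 1$ together with Cauchy--Schwarz on the full $L^2$ mass of $\Phi_2$ and $\Phi_3$; this still yields a contribution $\le \frac{p^2}{p^2-1}p^{-c}$ and keeps the total within $4p^{-c}$, so both treatments work.
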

\begin{proof}

Before we start, we first recall the matrix coefficient for unramified special representation from \cite{MW12}.

Let $\sigma_n=\zxz{\varpi^n}{0}{0}{1}$, and $\omega=\zxz{0}{1}{-1}{0}$. 
\begin{lem}\label{lemofMCforSpecial}
 Let $\pi=\sigma(\chi|\cdot|^{1/2},\chi|\cdot|^{-1/2})$ be an unramified special  representation of $\GL_2$ with $\chi$ unramified unitary. It has a normalized $K_1(\varpi)-$invariant new form. The associated matrix coefficient $\Phi$ for this new form is bi-$K_1(\varpi)-$invariant and can be given in the following table for double $K_1(\varpi)-$cosets:
 
 \begin{tabular}{|p{2cm}|p{1.8cm}|p{1.8cm}|p{1.8cm}|p{1.8cm}|p{1.8cm}|p{1.8cm}|}
  \hline
  g	&$1$	&$\omega$	&$\sigma_n$	&$\omega\sigma_n$	&$\sigma_n\omega$	&$\omega\sigma_n\omega$\\\hline
  $\Phi(g)$	&$1$	&$-p^{-1}$	&$\chi^np^{-n}$	&$-\chi^np^{1-n}$ &$-\chi^np^{-1-n}$	&$\chi^np^{-n}$	\\\hline	
 \end{tabular}
 
 In this table $n\geq 1$.
\end{lem}

As in the last subsection, we can split the integral $I_v$ as 
\begin{equation}\label{triplelocalintII}
I_v=\sum\limits_{i=0}^{c}A_i\int\limits_{a,m}\prod\limits_{j=1}^{3}\Phi_j(\zxz{a}{m}{0}{1}\zxz{1}{0}{\varpi^i}{1})|a|^{-1}d^*a dm.
\end{equation}
We can use the support of $\Phi_2$ and $\Phi_3$ to simplify the calculations. In particular we put all the information necessary for the integral into the following table.

\begin{tabular}{|p{3cm}|p{1.5cm}|p{2.5cm}|p{5cm}|p{3cm}|}
 \hline
 Cases	&$A_i$	&$\Phi_1$	& coefficient of level 0 component of $\Phi_2\Phi_3$	&$|a|^{-1}d^*adm$\\\hline
 $i=c$, $v(a)=0$, $v(m)\geq 0$	&$\frac{1}{(p+1)p^{c-1}}$ &$1$	&$1$	&$1$\\\hline
 $i=c$, $v(a)=0$, $v(m)=-1$ &$\frac{1}{(p+1)p^{c-1}}$	&$-p^{-1}$	&$\frac{1}{(p-1)^2}$	&$p-1$\\\hline
 $i=c-1$, $v(a)=0$, $v(m)\geq 0$	&$\frac{p-1}{(p+1)p^{c-1}}$	&$1$	&$\frac{1}{(p-1)^2}$	&$1$\\\hline
 $i=c-1$, $v(a)=0$, $v(m)=-1$	&$\frac{p-1}{(p+1)p^{c-1}}$	&$-p^{-1}$	&bdd by $\frac{p^3-2p^2+1}{(p-1)^4}$	&$p-1$\\\hline
$c/2<i<c-1$, $v(a)=0$, $v(m)=i-c$	&$p^{-i}\frac{p-1}{(p+1)}$	&$-p^{1+2i-2c}$	&bdd by $\frac{p}{(p-1)^2p^{c-1-i}}$	&$(p-1)p^{c-i-1}$\\\hline
 $0<i<c/2$, $v(a)=2i-c$, $v(m)=i-c$	&$p^{-i}\frac{p-1}{(p+1)}$	&$-\chi^{2i-c}p^{1-c}$	&bdd by $\frac{p}{(p-1)^2p^{c-1-i}}$	&$p^{2i-c}(p-1)p^{c-i-1}$\\\hline
 $i=0$, $v(a)=-c$, $v(m)=-c$	&$\frac{p}{(p+1)}$	&$
                                              -\chi^{-c}p^{1-c}$,  if
                                              $v(a+m)>-c$;
                                              $\chi^{-c}p^{-c}$,  if
                                              $v(a+m)=-c$.
                                             	&coefficients of level 1 and level 0 components bdd by $\frac{p}{(p-1)^2p^{c-1}}$	&$p^{-c}(p-1)p^{c-1}$\\\hline

 \end{tabular}

The column for $\Phi_1$ is a reformulation of the results of Lemma \ref{lemofMCforSpecial} in terms of double $B-K_1(\varpi)$ cosets on the support of $\Phi_2$.
For the column of $\Phi_2\Phi_3$ we have used Proposition \ref{propofsupportofMC}, and Cauchy-Schwartz inequality whenever we know only $L^2$ norm of the coefficients for $\Phi_2$ and $\Phi_3$.
One only has to care about the level 0 component of $\Phi_2\Phi_3$ when $i\neq 0$  because the value of $\Phi_1$ is constant on each support.

The first observation is that when $0<i<c-1$ becomes smaller, the contribution from the corresponding integral is bounded by 
$$\frac{p^{2-2c}}{p+1}p^{i},$$
which is a geometric sequence whose sum can be nicely bounded. Note that when we move on to $0<i<c/2$ from $c/2<i<c-1$, the ``discontinuity'' comes from $\Phi_1$ and the Haar measure $|a|^{-1}d^*adm$. But their change just cancel each other when we take absolute values. (Note that $|\chi|=1$.)
We didn't put $i=c/2$ in the table, as Proposition \ref{propofsupportofMC} only claims $v(a)\geq 0$ in that case. But whenever there is a level 0 component of $\Phi_2\Phi_3$ supported at $v(a)>0$, one can check that $|\Phi_1|=|-\chi^{v(a)}p^{1-c}p^{-v(a)}|=p^{1-c}p^{-v(a)}$ and there will be an additional factor $p^{v(a)}$ coming from the Haar measure $|a|^{-1}d^*adm$. So the effect of $v(a)>0$ will be cancelled, and
the upper bound $$\frac{p^{2-2c}}{p+1}p^{i}$$ is true also for $i=c/2$ case.

The case when $i=0$ is slightly more complicated, as the level 1 components of $\Phi_2\Phi_3$ will also contribute to the final integral.  When $i=0$, $v(a)=v(m)=-c$,
\begin{equation}
 \Phi_1(\zxz{a}{m}{0}{1}\zxz{1}{0}{1}{1})=\begin{cases}
                                             -\chi^{-c}p^{1-c}, &\text{\ if\ }v(a+m)>-c;\\
                                            \chi^{-c}p^{-c}, &\text{\ if\ }v(a+m)=-c.
                                           \end{cases}
\end{equation}
As a result of this,
\begin{equation}
 |\int\limits_{v(a)=-c} \Phi_1(\zxz{a}{m}{0}{1}\zxz{1}{0}{1}{1})d^*a|\leq p^{1-c}\frac{2}{p-1},
\end{equation}
and
\begin{equation}
 |\int\limits_{v(a)=-c} \Phi_1(\zxz{a}{m}{0}{1}\zxz{1}{0}{1}{1})\nu(a)d^*a|\leq p^{1-c}(1+p^{-1})
\end{equation}
for all character $\nu$ of level 1.

Then the contribution from $i=0$ is bounded by 
\begin{equation}
 \frac{p}{p+1}p^{1-c}[\frac{2}{p-1}+(p-2)(1+p^{-1})]\frac{p}{(p-2)^2p^{c-1}}(1-p^{-1})=\frac{p^{3-2c}}{p^2-1}\frac{p^3-2p^2+p+2}{p(p-1)}.
\end{equation}
Combining all the pieces, we can get
\begin{align}
 |I_v|&\leq \frac{1}{(p+1)p^{c-1}}[1-\frac{1}{p(p-1)}]+\frac{p-1}{(p+1)p^{c-1}}[\frac{1}{(p-1)^2}+p^{-1}\frac{p^3-2p^2+1}{(p-1)^3}]\\
 &\text{\ \ }+\sum\limits_{i=1}^{c-2}\frac{p^{2-2c}}{p+1}p^{i}+\frac{p^{3-2c}}{p^2-1}\frac{p^3-2p^2+p+2}{p(p-1)}\notag\\
 &\leq 4p^{-c}.\notag
\end{align}
We have used that $c\geq 2$ and $p\geq 2$ in the last inequality.
\end{proof}

\begin{prop}\label{propoflocalboundIIold}
  Let $\pi_i,i=1,2,3$ and $\Phi_2$ $\Phi_3$ be as in Proposition \ref{propoflocalboundII}. Let $\Phi_1$ be the matrix coefficient associated to an old form $\pi_1(\zxz{\varpi^{-n}}{0}{0}{1})f_1$, where $f_1$ is still a new form of $\pi_1$. 
 Suppose that $1+2n<c$. Then 
 \begin{equation}
  |I_v|=|\int\limits_{\Q_v^*\backslash \GL_2^*(\Q_v)}\prod\limits_{j=1}^{3}\Phi_j(g)dg|\leq  \frac{4}{p^c}p^{2n}.
 \end{equation}

\end{prop}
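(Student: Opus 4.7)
The plan is to mirror the passage from Proposition \ref{propoflocalboundI} to Proposition \ref{propoflocalboundIold}: take the bounds already established for the new-form case in Proposition \ref{propoflocalboundII}, and control the inflation caused by translating the test vector in $\pi_1$ by $\sigma_{-n}$.

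First, I would apply the shift identity derived in the proof of Corollary \ref{corMCofoldform},
$$\widetilde{\Phi}_1\!\left(\zxz{a}{m}{0}{1}\zxz{1}{0}{\varpi^i}{1}\right)=\Phi_1\!\left(\zxz{a}{m\varpi^n}{0}{1}\zxz{1}{0}{\varpi^{i-n}}{1}\right).$$
Since $\pi_1$ has level $1$, $\widetilde{\Phi}_1$ is $K_1(\varpi^{n+1})$-invariant, and the hypothesis $1+2n<c$ gives $n+1<c$, so $\widetilde{\Phi}_1\Phi_2\Phi_3$ is still $K_1(\varpi^c)$-invariant. Consequently Lemmas \ref{Iwasawadecomp} and \ref{localintcoefficient} yield
$$I_v=\sum_{i=0}^{c}A_i\int_{a,m}\widetilde{\Phi}_1(g_i)\Phi_2(g_i)\Phi_3(g_i)\,|a|^{-1}d^*a\,dm,\qquad g_i=\zxz{a}{m}{0}{1}\zxz{1}{0}{\varpi^i}{1},$$
exactly as in the proof of Proposition \ref{propoflocalboundII}. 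The supports and $L^2$-norm bounds on $\Phi_2\Phi_3$ from Proposition \ref{propofsupportofMC} are unchanged, so the same case table (organised by $i$ and by the coset of $(v(a),v(m))$) can be reused, with the $\Phi_1$-column replaced by the shifted value $\widetilde{\Phi}_1$.

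Next, I would evaluate $\widetilde{\Phi}_1$ on each support region by reducing the shifted argument to its Iwasawa form for $K_1(\varpi)$ and invoking Lemma \ref{lemofMCforSpecial}. Writing the shifted argument as a Borel element times an element of $K_1(\varpi)$ (or, if $v(m\varpi^n)<v(a)$ and $i-n\leq 0$, transferring to the $B\zxz{1}{0}{\varpi}{1}K_1(\varpi)$ cell), the value picks up a single $\sigma_k$-factor with $k$ essentially $v(a)+(i-n)$ and a Weyl-factor when the argument lands in $B\omega B$. Since $|\Phi_1(\sigma_k)|=p^{-|k|}$ and $|\Phi_1(\omega\sigma_k)|\leq p^{1-|k|}$, the double shift in $m$ and in $i$ can worsen the matrix-coefficient value by a factor of at most $p^n$ each, hence $p^{2n}$ overall. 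Comparing entry-by-entry with the table of Proposition \ref{propoflocalboundII}, every bound on the integrand inflates by at most $p^{2n}$ while the Haar-measure weights, the $L^2$-norm bounds for the level-$1$ and level-$0$ components of $\Phi_2\Phi_3$, and the Cauchy--Schwarz step in the $i=c-1$ and $i=0$ cells all carry over verbatim. Summing gives $|I_v|\leq p^{2n}\cdot 4p^{-c}$.

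The main obstacle, as in Proposition \ref{propoflocalboundIold}, is the bookkeeping for the cells with $i<n$, where $i-n<0$ means the factor $\zxz{1}{0}{\varpi^{i-n}}{1}$ no longer lies in $K_1(\varpi)$ and one must Bruhat-decompose the shifted argument into $n_1 t\omega n_2$ before reading off the value from Lemma \ref{lemofMCforSpecial}. The hypothesis $1+2n<c$ is used precisely here: it ensures that the $m$-support $v(m)=i-c$ satisfies $v(m\varpi^n)=i-c+n<i-n$, so the element indeed falls in the $\omega$-cell and the bound $|\Phi_1(\omega\sigma_k)|\leq p^{1-|k|}$ gives the claimed $p^{2n}$ inflation uniformly. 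Once these cases are handled, the geometric sum over $i$ closes up exactly as in Proposition \ref{propoflocalboundII}, and the final bound $\frac{4}{p^c}p^{2n}$ drops out.
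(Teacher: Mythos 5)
Your overall strategy is the same as the paper's: push the shift identity from Corollary \ref{corMCofoldform} through the table in the proof of Proposition \ref{propoflocalboundII}, reading off $\widetilde{\Phi}_1$ from Lemma \ref{lemofMCforSpecial} cell by cell and tracking a loss of at most $p^{2n}$. The paper's own proof is only a sketch, but it flags exactly one non-trivial point, and that is the point your write-up gets wrong: for the cells with $i\leq n$, the old-form matrix coefficient $\widetilde{\Phi}_1\bigl(\zxz{a}{m}{0}{1}\zxz{1}{0}{\varpi^i}{1}\bigr)=\Phi_1\bigl(\zxz{a\varpi^{2n-2i}}{a(\varpi^{n-i}-\varpi^{2n-2i})+m\varpi^{n}}{0}{1}\zxz{1}{0}{1}{1}\bigr)$ is \emph{not} constant in $a$ on the support of $\Phi_2\Phi_3$, nor is it determined by $v(a)$ alone. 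Which double $K_1(\varpi)$-coset the argument lies in depends on cancellation in the upper-right entry $a(\varpi^{n-i}-\varpi^{2n-2i})+m\varpi^{n}$, i.e.\ on $a$ modulo $\varpi^{1+n-i}$ (the level-$1$ dependence via $v(a+m)$ already visible in the $i=0$ row of the new-form table is the $n=0$ shadow of this). Consequently $\widetilde{\Phi}_1$, viewed as a function of $a$, has components of level up to $1+n-i$, and when you integrate $\widetilde{\Phi}_1\Phi_2\Phi_3$ in $a$ you must pair against the components of $\Phi_2\Phi_3$ of level up to $1+n-i$ --- not only the level-$0$ and level-$1$ ones, and not only in the $i=c-1$ and $i=0$ cells. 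Your claim that the Cauchy--Schwarz step "carries over verbatim" therefore does not hold for $0<i\leq n$; treating $\widetilde{\Phi}_1$ as a single $\sigma_k$- or $\omega\sigma_k$-value per cell, with $k$ read off from valuations, skips precisely the bookkeeping the proposition requires.

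The gap is localized and repairable: either expand $\widetilde{\Phi}_1$ in multiplicative characters of level $\leq 1+n-i$ and bound each pairing with the corresponding component of $\Phi_2\Phi_3$ by Cauchy--Schwarz (the $L^2$ norms of those components are controlled by Proposition \ref{propofsupportofMC}), or more crudely bound $\sup_a|\widetilde{\Phi}_1|\cdot\int|\Phi_2\Phi_3|\,d^*a$ and use Parseval, which returns the same numerical bound as the level-$0$ coefficient. Either route, combined with your (correct) observation that the shifts in $m$ and in $i$ each cost at most $p^{n}$ in $|\Phi_1|$, closes the geometric sum and yields $|I_v|\leq 4p^{2n-c}$. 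But as written, the component analysis in the cells $0<i\leq n$ is missing, and that is the only substantive content of this proposition beyond Proposition \ref{propoflocalboundII}.
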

\begin{proof}
 As in Corollary \ref{corMCofoldform}, one can describe the matrix coefficient of an old form using the matrix coefficient of the new form in Lemma \ref{lemofMCforSpecial}. Then one can bound the local integral as in Proposition \ref{propoflocalboundII}. 
When $i\leq n$, one need to consider the components of $\Phi_2\Phi_3$ of level up to $1+n-i$. We shall skip the technicalities here. Again the bound we get here is pretty loose, but it will serve the purpose.
 \end{proof}

\subsection{Proof of Theorem \ref{thmoffirstineq} when $\phi$ is Maass eigencuspform}
For the cases considered in Proposition \ref{propoflocalboundI} and \ref{propoflocalboundII}, we can also get a bound for the normalized local integrals
\begin{equation}
I_v^0\leq 10^5p^{-c}.
\end{equation}
We are not very careful on bounding the normalizing L-factors, as any fixed constant multiple will eventually be bounded by $(q/\sqrt{C})^{\epsilon}$ for any $\epsilon>0$.
Note this bound is actually better than the bound obtained in Corollary 2.8 of \cite{PPA14}. (Of course that is for different cases.) 

When $\phi$ is an old form, we need to use Proposition \ref{propoflocalboundIold} and Proposition \ref{propoflocalboundIIold}, and there are extra $p^n$ or $p^{2n}$ factors in the local upper bounds. But when we take a product, such extra factors will be bounded by $N^2$, where $N$ is the fixed level of $\phi$. 
So they can be ignored when we discuss the asymptotic behavior of $f$.
Note that we haven't consider the case when the local component of $\phi$ is an old form from an unramified representation. For this case one can 
change the local triple product integral into Rankin-Selberg integral as in \cite{PPA14}, and we will give an upper bound for local Rankin-Selberg integral for old forms very explicitly in next section. So we will not consider this case in detail here.

From this point on we can use the same argument as in \cite{PPA14} to prove Theorem \ref{thmoffirstineq}. So we will be very brief. 
Suppose that after adelization, $f$ belongs to an automorphic cuspidal representation $\pi$. $f'$ also belongs to $\pi$, while its component at infinity is of weight $-k$. Similarly suppose that the adelization of $\phi$ belongs to $\pi_\phi$.

Recall that $C$ is the finite conductor of $\pi\times\pi$. The conductor of $\pi\times\pi\times\pi_\phi$ is then $\asymp_\phi C^2k^4$. The argument of \cite{So10} implies that 
\begin{equation}
L(\pi_\phi\times\pi\times\pi,1/2)<<\frac{\sqrt{C}k}{log(Ck)^{1-\epsilon}}.
\end{equation}
Then combining the local bounds (including Corollary 2.8 of \cite{PPA14}) with this weak subconvexity bound into the Triple product formula will prove the claim in Theorem \ref{thmoffirstineq}.

\section{Proof of Theorem \ref{thmoffirstineq} when testing on incomplete Eisenstein series}
In this section, we shall prove Theorem \ref{thmoffirstineq} when $\phi=E_\mathfrak{a}(z,\Psi)$ is an incomplete Eisenstein series of level $N$. Here $\mathfrak{a}$ is a cusp for $\Gamma_0(N)$ and $\Psi$ is a compactly supported function on $\R^+$.


According to (\ref{spectrumdecompofincompleteEis}), 
\begin{equation}
E_\mathfrak{a}(z,\Psi)=\frac{\hat{\Psi}(1)}{Vol(Y_0(N))}+
\frac{1}{2\pi i}\int\limits_{(1/2)}\hat{\Psi}(s)E_\mathfrak{a}(z,s)ds.
\end{equation}
So
\begin{equation}
\frac{\mu_f(E_\mathfrak{a}(z,\Psi))}{\mu_f(1)}=\frac{\hat{\Psi}(1)}{Vol(Y_0(N))}+\frac{1}{2\pi i\mu_f(1)}\int\limits_{(1/2)}\hat{\Psi}(s)\mu_f( E_\mathfrak{a}(z,s))ds.
\end{equation}
On the other hand,
\begin{align}
\mu(E_\mathfrak{a}(z,\Psi))&=\int\limits_{\Gamma_0(N)\backslash\H}\sum\limits_{\gamma\in \Gamma_0(N)_\mathfrak{a}\backslash\Gamma_0(N)}\Psi(\Im (\sigma_{\mathfrak{a}}^{-1}\gamma z)) \frac{dxdy}{y^2}
=\int\limits_{\Gamma_0(N)_\mathfrak{a}\backslash\H}\Psi(\Im (\sigma_{\mathfrak{a}}^{-1} z)) \frac{dxdy}{y^2}\\
&=\int\limits_{\Gamma_\infty\backslash\H}\Psi(\Im (z)) \frac{dxdy}{y^2}\notag
\end{align}
In the last equality, we have made a change of variable $\sigma_{\mathfrak{a}}^{-1} z\rightarrow z$, and that 
\begin{equation}
 \sigma_\mathfrak{a}^{-1}\Gamma_0(N)_\mathfrak{a}\sigma_\mathfrak{a}=\Gamma_\infty.
\end{equation}
Then
\begin{equation}\label{formula4.1}
\frac{\mu(E_\mathfrak{a}(z,\Psi))}{\mu(1)}=\frac{1}{\mu(1)}\int\limits_{x=-1/2}^{1/2}\int\limits_{y=0}^{\infty}\Psi(y)\frac{dxdy}{y^2} =\frac{\hat{\Psi}(1)}{Vol(Y_0(N))}.
\end{equation}
So to prove Theorem \ref{thmoffirstineq} for incomplete Eisenstein series, it would be enough to show the same inequality for 
\begin{equation}
\mu_f( E_\mathfrak{a}(z,s))
\end{equation}
 with uniform implied constant for all $s$ on the line $\Re(s)=1/2$.

The idea is then similar as in the case of Maass eigencuspforms. We shall adelize $f$ $f'$ and $E_\mathfrak{a}(z,s)$. Then $\mu_f(E_\mathfrak{a}(z,s))$ will be essentially the Rankin-Selberg integral. The result will follow from the weak subconvexity bound for the Rankin-Selberg L-function and a reasonable upper bound for the local Rankin-Selberg integral.
\subsection{Adelization of $E_\mathfrak{a}(z,s)$}
It is impossible however, to adelize $E_\mathfrak{a}(z,s)$ to be an automorphic Eisenstein series coming from a single induced representation. We shall first write $E_\mathfrak{a}(z,s)$ as a linear combination of related Eisenstein series, which in turn can be adelized to be simple automorphic Eisenstein series.
We believe that the contents of this subsection should be well-known to experts. But as we couldn't find a proper reference, we will work them out with a little more details.

Suppose that $\tau_\mathfrak{a}=\zxz{a}{b}{c}{d}$ where $c|N$ and $d\in (\Z/(c,N/c))^*$, as in Section \ref{subsecofcusps}. For simplicity, let $c'=(c,N/c)$. We also have the width of the cusp $d_\mathfrak{a}=\frac{N}{c c'}$.

First of all, note that
\begin{equation}
E_\mathfrak{a}(z,s)=d_\mathfrak{a}^{-s}\sum\limits_{\gamma\in \tau_\mathfrak{a}^{-1}\Gamma_0(N)_\mathfrak{a}\tau_\mathfrak{a}\backslash\tau_\mathfrak{a}^{-1}\Gamma_0(N)}\Im(\gamma z)^s,
\end{equation}
where $\tau_\mathfrak{a}^{-1}\Gamma_0(N)_\mathfrak{a}\tau_\mathfrak{a}$ is generated by $\pm 1$ and $\zxz{1}{d_\mathfrak{a}}{0}{1}$.
Let $\iota_\mathfrak{a}$ denote the injection
\begin{align}
 \iota_\mathfrak{a}:\tau_\mathfrak{a}^{-1}\Gamma_0(N)_\mathfrak{a}\tau_\mathfrak{a}\backslash\tau_\mathfrak{a}^{-1}\Gamma_0(N)&\hookrightarrow \Gamma_\infty\backslash \SL_2(\Z).\\
 \gamma\text{\ \ \ \ }&\mapsto \text{\ \ }\gamma\notag
 \end{align}
One can easily check by definition that this is indeed an injection, and 
\begin{equation}
 \Image(\iota_\mathfrak{a})=\Gamma_\infty\backslash \Gamma_\infty\tau_\mathfrak{a}^{-1}\Gamma_0(N).
\end{equation}
Since $\tau_\mathfrak{a}$'s are double coset representatives for $\Gamma_0(N)\backslash\SL_2(\Z)/\Gamma_\infty$, then $\Image(\iota_\mathfrak{a})$ are disjoint for different $\mathfrak{a}$'s, and
\begin{equation}
 \coprod\limits_{\mathfrak{a}}\Image(\iota_\mathfrak{a})=\Gamma_\infty\backslash \SL_2(\Z).
\end{equation}

Define
\begin{equation}
\Gamma(N,c)=\{\zxz{k_1}{k_2}{k_3}{k_4}\in \SL_2(\Z)| (k_3,N)=c\}.
\end{equation}
It is left $\Gamma_\infty-$invariant and right $\Gamma_0(N)-$ invariant, and
\begin{equation}
\SL_2(\Z)=\coprod\limits_{c|N}\Gamma(N,c).
\end{equation}
Recall that $\mathcal{C}[c]$ is the set of cusps whose denominator is the given number $c$.
Then we have
\begin{equation}
\coprod\limits_{\mathfrak{a}\in \mathcal{C}[c]}\text{\Image}(\iota_\mathfrak{a})=\Gamma_\infty\backslash \Gamma(N,c).
\end{equation}

\begin{defn}
 For fixed $N$, $c$ and $c'$, let $\chi$ be a Dirichlet character of level dividing $c'$. Define the following Eisenstein series associated to $\chi$:
\begin{equation}
E_{\chi,c}(z,s)=\sum\limits_{\gamma\in \Gamma_\infty\backslash\Gamma(N,c)}\Im(\gamma z)^s\chi(\frac{k_1c}{k_3}), \text{\ if\ }\gamma=\zxz{k_1}{k_2}{k_3}{k_4}.
\end{equation}
\end{defn}

Note that if $\gamma\in \Image(\iota_\mathfrak{a})$, then 
\begin{equation}
\zxz{k_1}{k_2}{k_3}{k_4}=\gamma'\zxz{d}{-b}{-c}{a}\gamma''
\end{equation}
for some $\gamma'\in\Gamma_\infty$ and $\gamma''\in \Gamma_0(N)$ and $\tau_\mathfrak{a}=\zxz{a}{b}{c}{d}$. Then
\begin{equation}
\chi(\frac{k_1c}{k_3})=\chi(-d)
\end{equation}
is constant on $\Image(\iota_\mathfrak{a})$. This implies the following
\begin{lem}
$(\frac{N}{c c'})^{s}E_\mathfrak{a}(z,s)$ is a linear combination of $E_{\chi,c}(z,s)$ for all Dirichlet characters $\chi$ of level dividing $c'$.
\end{lem}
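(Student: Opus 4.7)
The plan is to use the decomposition $\coprod_{\mathfrak{a} \in \mathcal{C}[c]} \text{Image}(\iota_\mathfrak{a}) = \Gamma_\infty\backslash\Gamma(N,c)$ together with the observation already highlighted in the excerpt — namely, that for $\gamma \in \text{Image}(\iota_\mathfrak{a})$ the value $\chi(k_1 c/k_3)$ is constant, equal to $\chi(-d)$ where $\tau_\mathfrak{a} = \zxz{a}{b}{c}{d}$ — and then invert the resulting finite sum by the orthogonality of characters on $(\Z/c')^*$.

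First, rewriting $E_\mathfrak{a}(z,s)$ in the form given at the top of the subsection, I get
\begin{equation*}
\left(\tfrac{N}{cc'}\right)^{s} E_\mathfrak{a}(z,s) = d_\mathfrak{a}^{s} E_\mathfrak{a}(z,s) = \sum_{\gamma \in \text{Image}(\iota_\mathfrak{a})} \Im(\gamma z)^s.
\end{equation*}
Splitting the sum defining $E_{\chi,c}(z,s)$ along the disjoint decomposition $\Gamma_\infty\backslash\Gamma(N,c) = \coprod_{\mathfrak{a} \in \mathcal{C}[c]} \text{Image}(\iota_\mathfrak{a})$ and pulling out the locally constant factor $\chi(-d_\mathfrak{a})$ (writing $d_\mathfrak{a}$ for the $\bmod\, c'$ class associated to $\mathfrak{a}$), I would obtain
\begin{equation*}
E_{\chi,c}(z,s) = \sum_{\mathfrak{a} \in \mathcal{C}[c]} \chi(-d_\mathfrak{a}) \left(\tfrac{N}{cc'}\right)^{s} E_\mathfrak{a}(z,s).
\end{equation*}

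Second, since $\mathcal{C}[c]$ is parametrized precisely by $(\Z/c')^*$ via $\mathfrak{a} \mapsto d_\mathfrak{a}$, the above identity is a discrete Fourier transform on the abelian group $(\Z/c')^*$. By the orthogonality of Dirichlet characters modulo $c'$,
\begin{equation*}
\left(\tfrac{N}{cc'}\right)^{s} E_\mathfrak{a}(z,s) = \frac{1}{\varphi(c')} \sum_{\chi \bmod c'} \overline{\chi(-d_\mathfrak{a})}\, E_{\chi,c}(z,s),
\end{equation*}
which exhibits $(\tfrac{N}{cc'})^{s} E_\mathfrak{a}(z,s)$ as the desired linear combination of $E_{\chi,c}(z,s)$ over characters $\chi$ of level dividing $c'$.

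The only subtle point, which I would verify carefully, is the bookkeeping for the parametrization: that the $\varphi(c')$ cusps in $\mathcal{C}[c]$ indeed correspond bijectively to the residue classes $d \in (\Z/c')^*$ in such a way that the value $\chi(-d)$ appearing in the calculation matches the $d$-coordinate under the parametrization from Section \ref{subsecofcusps}. Once this identification is confirmed, the argument is purely formal — a disjoint decomposition followed by Fourier inversion on a finite abelian group — and no analytic input is needed.
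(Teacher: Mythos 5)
Your proposal is correct and follows essentially the same route as the paper: the paper's argument is precisely the decomposition $\Gamma_\infty\backslash\Gamma(N,c)=\coprod_{\mathfrak{a}\in\mathcal{C}[c]}\Image(\iota_\mathfrak{a})$ together with the constancy of $\chi(k_1c/k_3)=\chi(-d)$ on each piece, with the final Fourier inversion over $(\Z/c')^*$ left implicit. You have merely made that last inversion step, and the bookkeeping $\sharp\mathcal{C}[c]=\varphi(c')$, explicit.
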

The adelization of $E_{\chi,c}(z,s)$ is now easy to describe. Any Dirichlet character $\chi$ of level dividing $c'$ has an idelic lift of the same level, which we will also denote by $\chi$. 
Suppose
\begin{equation}
 c=\prod\limits_{p|c}p^{c_p},
\end{equation}
and 
\begin{equation}
 N=\prod\limits_{p|N}p^{e_p}.
\end{equation}

Let $\Phi_{s,c}$ be the element of $\text{Ind}_B^{\GL_2}(\chi|\cdot|^{s-1/2},\chi^{-1}|\cdot|^{-s+1/2})$ whose local component at $p$ is supported on 
\begin{equation}
 B\zxz{1}{0}{p^{c_p}}{1}K_0(p^{e_p}),
\end{equation}
and take value $1$ at $\zxz{1}{0}{p^{c_p}}{1}$. Then the adelization of $E_{\chi,c}(z,s)$ is
\begin{equation}
 E_{\chi,c}(g,s)=\sum\limits_{\gamma\in B(\Q)\backslash\GL_2(\Q)}\Phi_{s,c}(\gamma g).
\end{equation}
Recall that $N_\Diamond$ is the largest integer such that $N_\Diamond^2|N$.
\begin{defn}\label{defnofEisofleveluptoN}
 Define  $V_{E}(N)$ to be space spanned by $\Phi_s$,
where $\Phi_s$ runs over new forms or old forms of level dividing $N$ in $\text{Ind}_B^{\GL_2}(\chi|\cdot|^{s-1/2},\chi^{-1}|\cdot|^{-s+1/2})$, and $\chi$ runs over idelic lifts of Dirichlet characters of level dividing $N_\Diamond$.
Define the operator $Eis$ to be
 \begin{equation}
 Eis: \Phi_s \mapsto \sum\limits_{\gamma\in B(\Q)\backslash\GL_2(\Q)}\Phi_{s}(\gamma g).
 \end{equation}
\end{defn}

\begin{cor}\label{corofadelization}
 The adelization of $(\frac{N}{c c'})^{s}E_\mathfrak{a}(z,s)$ belongs to $Eis(V_{E}(N))$.
\end{cor}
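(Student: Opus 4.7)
The plan is to combine the preceding Lemma with the definition of $V_{E}(N)$ and the standard adelization procedure, so that essentially nothing needs to be done beyond some bookkeeping.

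First I would invoke the preceding Lemma to write
\begin{equation*}
\left(\tfrac{N}{cc'}\right)^{s} E_{\mathfrak{a}}(z,s) = \sum_{\chi} a_\chi\, E_{\chi,c}(z,s),
\end{equation*}
where $\chi$ ranges over Dirichlet characters of level dividing $c'$ and $a_\chi$ are constants determined by inverting the finite Fourier transform on the values $\chi(-d)$ indexed by $\mathfrak{a}\in\mathcal{C}[c]$. Since adelization is linear, it suffices to show that each $E_{\chi,c}(g,s)$ lies in $Eis(V_{E}(N))$.

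Second, I would verify that the adelization of $E_{\chi,c}(z,s)$ is precisely $Eis(\Phi_{s,c})$, i.e.
\begin{equation*}
E_{\chi,c}(g_z,s) = \sum_{\gamma\in B(\Q)\backslash \GL_2(\Q)} \Phi_{s,c}(\gamma g_z).
\end{equation*}
This is analogous to the worked example of the spherical $E(z,s)$ in Section \ref{subsecofAdelization}: a standard unfolding via strong approximation $\GL_2(\A) = \GL_2(\Q)\GL_2^+(\R)\prod_p K_0(p^{e_p})$ re-expresses the adelic sum as a classical sum over $\Gamma_\infty\backslash\Gamma_0(N)$-cosets weighted by the archimedean component of $\Phi_{s,c}$. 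The local support condition $B\zxz{1}{0}{p^{c_p}}{1}K_0(p^{e_p})$ at finite places cuts out exactly the congruence condition defining $\Gamma(N,c)$ (i.e. $(k_3,N)=c$), while the $(\chi, \chi^{-1})$-equivariance of $\Phi_{s,c}$ on the Borel contributes the factor $\chi(k_1c/k_3)$ in the classical definition of $E_{\chi,c}$.

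Third I would check $\Phi_{s,c}\in V_{E}(N)$ against Definition \ref{defnofEisofleveluptoN}. The character $\chi$ has level dividing $c'$, and because $(c')^{2}\mid c\cdot(N/c)=N$ we have $c'\mid N_{\Diamond}$, so $\chi$ is among the allowed characters. At each prime $p$, the local factor of $\Phi_{s,c}$ is by construction right $K_0(p^{e_p})$-invariant; since the central character $\chi\cdot\chi^{-1}=1$ is trivial, this coincides with $K_1(p^{e_p})$-invariance and hence $\Phi_{s,c}$ lies in the finite-dimensional span of new/old forms of level dividing $N$ in $\mathrm{Ind}_B^{\GL_2}(\chi|\cdot|^{s-1/2},\chi^{-1}|\cdot|^{-s+1/2})$. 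Combining these three steps, the adelization of $(N/(cc'))^{s}E_{\mathfrak{a}}(z,s)$ is a linear combination $\sum_\chi a_\chi\,Eis(\Phi_{s,c})\in Eis(V_{E}(N))$.

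The only genuine obstacle is the bookkeeping in step two: one must carefully match the local support condition of $\Phi_{s,c}$ at each $p\mid N$ with the classical congruence $(k_3,N)=c$, and match the Borel-equivariance by $(\chi,\chi^{-1})$ with the character factor $\chi(k_1c/k_3)$. This is a routine local computation using the Bruhat/Iwasawa decomposition and strong approximation, but it is where the normalizing constant $(N/(cc'))^{s}$, which absorbs the width $d_{\mathfrak{a}}=N/(cc')$ of the cusp, is accounted for.
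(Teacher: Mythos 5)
Your proposal is correct and follows the paper's own route exactly: it combines the preceding Lemma (finite Fourier inversion over characters of $(\Z/c')^*$ to express $(\tfrac{N}{cc'})^{s}E_\mathfrak{a}(z,s)$ in terms of the $E_{\chi,c}(z,s)$) with the explicit adelization $E_{\chi,c}(g,s)=Eis(\Phi_{s,c})$ and the observation that each $\Phi_{s,c}$, being $K_0(p^{e_p})$-invariant with $\chi$ of level dividing $c'\mid N_\Diamond$, lies in $V_E(N)$. The paper treats the corollary as immediate from these ingredients, and your bookkeeping (including the check that $(c')^2\mid N$ forces $c'\mid N_\Diamond$) fills in precisely the intended details.
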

The way we are going to use this result is as follows. The factor $(\frac{N}{c c'})^{s}$ is easily controlled on $\Re(s)=1/2$. To prove certain asymptotic property for $E_\mathfrak{a}(z,s)$, it would then be enough to prove the same asymptotic property for a basis of $Eis(V_{E}(N))$. For different purposes, we might choose slightly different basis.

\subsection{Bounding local integral for Rankin-Selberg integral}
In this subsection we will give a reasonable upper bound for the following local  Rankin-Selberg integral at finite places:
\begin{equation}\label{formulaoflocalRS}
 J_p(s)=\int\limits_{NZ\backslash\GL_2}W_{f,p}(g)W_{f',p}(g)\Phi_{s,p}(g)dg
\end{equation}
where $\Phi_s$ will run over a basis of $V_E(N)$, $W_f$ is the Whittaker functional associated to $f$ and additive character $\psi$, and $W_{f'}$ is associated to $f'$ and $\psi^-$.
Then Theorem \ref{thmoffirstineq} will follow from the strategy stated in the beginning of this section.

We will pick a basis for $V_E(N)$ as follows: The local component of $\Phi_s$ at $p$ is either spherical, or supported on $B\zxz{1}{0}{p^{i}}{1}K_0(p^{e_p})$ for $i<e_p$. Note that the case when the local component is spherical is already covered in \cite{PPA14}. So we only need to consider the latter case here.
\begin{prop}\label{propRankinSelbergbound}
Locally let $\pi_i$ for $i=1,2$ be highly ramified representations of same level $c$ with trivial central characters. Let $W_1$ be the Whittaker functional associated to a newform of $\pi_1$ and additive character $\psi$.  Let $W_2$ be the Whittaker functional associated to a newform of $\pi_2$ and additive character $\psi^-$. 
Let $\Phi_s$ be a function from $\text{Ind}_B^{\GL_2}(\chi|\cdot|^{s-1/2},\chi^{-1}|\cdot|^{-s+1/2})$ with $\chi$ unitary, supported on 
$B\zxz{1}{0}{p^i}{1}K_0(p^{e_p})$ for $i<e_p$. Further suppose that $c>2e_p$. Then
\begin{equation}
 |J_p(s)|=|\int\limits_{NZ\backslash\GL_2}W_1(g)W_2(g)\Phi_{s}(g)dg|\leq \frac{p-1}{p+1}p^{-c/2}
\end{equation}
when $\Re(s)=1/2$.
\end{prop}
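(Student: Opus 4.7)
The plan is to collapse the Rankin--Selberg integral to a single Iwasawa orbit using the support of $\Phi_s$, and then to extract the bound from the Fourier expansion of newform Whittaker functions given by Proposition \ref{propWiofram}.

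First I would apply the Iwasawa decomposition of Lemma \ref{Iwasawadecomp} at level $c$. All three integrands are right $K_1(\varpi^c)$-invariant (since $c > 2e_p$ implies $K_1(\varpi^c) \subset K_0(\varpi^{e_p})$), so the integral decomposes as a sum over $j \in \{0,\ldots,c\}$ with volume factors $A_j$ from Lemma \ref{localintcoefficient}. The support hypothesis on $\Phi_s$ forces the sum to collapse to the single index $j = i$, because a level-$c$ Iwasawa coset $B\zxz{1}{0}{\varpi^j}{1}K_1(\varpi^c)$ is contained in the level-$e_p$ coset $B\zxz{1}{0}{\varpi^{\min(j,e_p)}}{1}K_0(\varpi^{e_p})$, and matching this to $i < e_p$ forces $j=i$. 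Using $\Phi_s(\zxz{a}{0}{0}{1}\zxz{1}{0}{\varpi^i}{1}) = \chi(a)|a|^s$, one reduces to estimating
\begin{equation*}
J_p(s) = A_i \int_{F^*} W_1^{(i)}(a)\, W_2^{(i)}(a)\, \chi(a)\, |a|^{s-1}\, d^*a.
\end{equation*}

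Second, the hypothesis $c > 2e_p$ together with $i < e_p$ gives $i < c/2$, so Proposition \ref{propWiofram}(3) applies to both $W_1, W_2$: each $W_j^{(i)}$ is supported at $v(a) = 2i - c$, and after substituting $a = \varpi^{2i-c} u$ with $u \in O_v^*$, admits a finite Fourier expansion $W_j^{(i)}(\varpi^{2i-c}u) = \sum_{\nu:\text{ level } c-i} a_\nu^{(j)} \nu(u)$ whose coefficient sequence has $L^2$-norm $1$. On $\Re(s) = 1/2$ the factor $|a|^{s-1}$ evaluates in absolute value to $p^{(2i-c)/2}$ on this support.

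Third, the remaining integral reduces to the orthogonality integral $\int_{O_v^*} \chi(u)\nu(u)\nu'(u)\, d^*u$, which is nonzero only if $\nu\nu' = \chi^{-1}$. The crux is that $\chi$ has level at most $\lfloor e_p/2\rfloor$ (since its conductor divides $N_\Diamond$), and $c > 2e_p$ then gives $\mathrm{level}(\chi) < c - i$, so $\nu \mapsto \nu^{-1}\chi^{-1}$ is a bijection on level-$(c-i)$ characters. Cauchy--Schwarz bounds the character sum $\big|\sum_\nu a_\nu^{(1)} a_{\nu^{-1}\chi^{-1}}^{(2)}\big| \leq 1$, and combining with $A_i = (p-1)/((p+1)p^i)$ and the absolute value $p^{(2i-c)/2}$ yields the stated bound $\frac{p-1}{p+1}p^{-c/2}$ under the natural normalization $\mathrm{vol}(O_v^*) = 1$. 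I expect the main obstacle to be the careful bookkeeping of measure normalizations on $NZ\backslash\GL_2$, $F^*$, and $O_v^*$ to pin down the exact constant, together with handling the boundary case $i = 0$, where $A_0 = p/(p+1)$ is slightly larger and may require either a short separate argument or a marginally weaker constant that still suffices for the intended application.
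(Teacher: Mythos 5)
Your proposal is correct and follows essentially the same route as the paper: collapse the integral to the single coset $B\zxz{1}{0}{p^i}{1}K_1(p^c)$ using Lemmas \ref{Iwasawadecomp} and \ref{localintcoefficient}, invoke Proposition \ref{propWiofram}(3) for the support $v(\alpha)=2i-c$ and the level-$(c-i)$ expansion with unit $L^2$ norm, and finish with Cauchy--Schwarz and $||\alpha|^{s-1}|=p^{i-c/2}$. Your closing concern about $i=0$ (where $A_0=p/(p+1)$ exceeds $(p-1)/(p+1)$) is a genuine boundary case that the paper's own proof silently ignores, but as you note it only perturbs the constant and is harmless for the application.
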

\begin{rem}
By the theory of newforms and oldforms in \cite{Ca73}, it would be automatic that the level of $\chi\leq \min\{i,e_p-i\}.$
 \end{rem}
\begin{rem}
  As we will only care about asymptotic behaviors, the assumption that $c>2e_p$ is reasonable.
\end{rem}

\begin{proof}
First note that 
\begin{equation}
 B\zxz{1}{0}{p^i}{1}K_0(p^{e_p})=B\zxz{1}{0}{p^i}{1}K_0(p^{j})
\end{equation}
for any $j>i$.

 As $\Phi_s$ is supported on 
$B\zxz{1}{0}{p^i}{1}K_0(p^{e_p})=B\zxz{1}{0}{p^i}{1}K_0(p^{c})$ for $i<e_p$, we have directly that
\begin{align}
 J_p(s)=\frac{p-1}{(p+1)p^{i}}\int W_1(\zxz{\alpha}{0}{0}{1}\zxz{1}{0}{p^i}{1})W_2(\zxz{\alpha}{0}{0}{1}\zxz{1}{0}{p^i}{1})\chi(\alpha)|\alpha|^{s-1}d^*\alpha.
\end{align}
Now we can apply part (3) of Proposition \ref{propWiofram}, which implies that both $W_1$ and $W_2$ taking values on $\zxz{\alpha}{0}{0}{1}\zxz{1}{0}{p^i}{1}$ are 
supported at $v(\alpha)=2i-c$, consisting of level $c-i$ components with coefficients of $L^2$ norm 1.
Then one just has to apply Cauchy-Schwartz inequality, 
and the easy fact that 
\begin{equation}
 ||\alpha|^{s-1}|=p^{i}p^{-c/2}
\end{equation}
when $v(\alpha)=2i-c$ and $\Re(s)=1/2$.
\end{proof}

\begin{rem}
 This inequality is actually better than what one could get when $\Phi_s$ is spherical locally. In particular one can argue similarly from here on to prove Theorem \ref{thmoffirstineq} as in \cite{PPA14}.
\end{rem}

\section{The second inequality}
\subsection{Fourier coefficient of Eisenstein series of level $N$}
We first study the Fourier coefficients for level $N$ Eisenstein series $E_\mathfrak{a}(\sigma_\mathfrak{b} z,s)$ by using its adelization discussed in the previous section.
Recall that
\begin{equation}\label{formulaofclassicalFourier2}
 E_\mathfrak{a}(\sigma_{\mathfrak{b}}z,s)=\delta_{\mathfrak{ab}}y^s+\varphi_\mathfrak{ab}(s)y^{1-s}+\sum\limits_{n\neq 0}\varphi_\mathfrak{ab}(n,s)\kappa_{s-1/2}(ny)e^{2\pi i nx}.
\end{equation}

\begin{lem}\label{lemofasymofconstant}
When $t\rightarrow \infty$,
\begin{equation}
\varphi_\mathfrak{ab}(1/2+it)=O(1).
\end{equation}
\end{lem}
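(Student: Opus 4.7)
The plan is to compute $\varphi_{\mathfrak{ab}}(s)$ explicitly via the adelization developed in Section 4.1 and then extract the bound from the functional equation of $L$-functions. By Corollary \ref{corofadelization}, the scaled Eisenstein series $(N/(cc'))^{s}E_\mathfrak{a}(\sigma_\mathfrak{b}z,s)$ is a finite linear combination, with coefficients depending only on $\mathfrak{a},\mathfrak{b},N$ (not on $s$), of adelic Eisenstein series $E_{\chi,c}(g_{\sigma_\mathfrak{b}z},s)$ built from flat sections $\Phi_{s,c}\in\mathrm{Ind}_B^{\GL_2}(\chi|\cdot|^{s-1/2},\chi^{-1}|\cdot|^{-s+1/2})$, where $\chi$ runs over idelic lifts of Dirichlet characters of conductor dividing $N_\Diamond$. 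Since the scaling factor $(N/(cc'))^{-s}$ has modulus 1 on $\Re(s)=1/2$, it is enough to bound the coefficient of $y^{1-s}$ in the constant Fourier term of each such $E_{\chi,c}(\sigma_\mathfrak{b}z,s)$.

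First I would compute the constant term along the cusp at infinity after the change of variable by $\sigma_\mathfrak{b}$ using the standard unfolding from (\ref{formulaofadelicEisconstantterm}). This decomposes the constant term into $\Phi_{s,c}(g_{\sigma_\mathfrak{b}z})$, which feeds into the $y^s$-piece, plus the global intertwining integral $\int_{\A}\Phi_{s,c}(\omega n(x)g_{\sigma_\mathfrak{b}z})\,dx$, which produces the $y^{1-s}$-piece and factors as a product of local intertwining operators over all places. At each finite $p\mid N$, the local intertwining operator applied to $\Phi_{s,c,p}$ yields a ratio of Gauss-sum-type constants and local $L$-factors, whose absolute value on $\Re(s)=1/2$ is bounded uniformly in $\Im(s)$ by a constant depending only on $N$. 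At primes away from $N$ the local operator produces the standard local factor $L_p(\chi^2,2s-1)/L_p(\chi^2,2s)$.

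The key step is then to bundle archimedean and finite contributions into the complete $L$-function ratio. The archimedean intertwining produces, up to a fixed power of $\pi$ and a bounded coefficient independent of $t$, the ratio of archimedean Euler factors $L_\infty(\chi^2,2s-1)/L_\infty(\chi^2,2s)$, with the shift by $0$ or $1$ determined by the sign of $\chi_\infty$. Multiplying the archimedean and finite factors yields the completed $L$-function ratio $\Lambda(\chi^2,2s-1)/\Lambda(\chi^2,2s)$. On the line $\Re(s)=1/2$, this takes the form $\Lambda(\chi^2,2it)/\Lambda(\chi^2,1+2it)$, and by the functional equation $\Lambda(\chi^2,s)=\epsilon(\chi^2)\,q_{\chi^2}^{1/2-s}\Lambda(\overline{\chi^2},1-s)$ the numerator equals (up to a factor of modulus $q_{\chi^2}^{1/2}=O_N(1)$) the complex conjugate of the denominator. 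Hence the ratio has absolute value $O_N(1)$, and the remaining bounded finite local constants contribute only an $O_N(1)$ multiplicative factor, giving $\varphi_{\mathfrak{ab}}(1/2+it)=O(1)$.

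The main obstacle I anticipate is the bookkeeping: matching up the explicit local intertwining outputs for ramified $\chi_p$ with the correct local $L$-factors, and correctly handling characters $\chi$ that are nontrivial at $\infty$ so that the archimedean ratio really does combine with the global ratio to give the completed $\Lambda$-ratio. An alternative, essentially equivalent, finishing argument is to invoke the unitarity of the scattering matrix $(\varphi_{\mathfrak{ab}}(s))_{\mathfrak{a},\mathfrak{b}}$ on $\Re(s)=1/2$, which immediately forces $|\varphi_{\mathfrak{ab}}(1/2+it)|\le 1$; I would include the adelic computation anyway, since analogous factorizations will be reused when estimating the nonzero Fourier coefficients $\varphi_{\mathfrak{ab}}(n,s)$ in the next lemmas of this subsection.
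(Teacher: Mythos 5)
Your proposal follows essentially the same route as the paper: adelize $E_\mathfrak{a}(\sigma_\mathfrak{b}z,s)$ via Corollary \ref{corofadelization}, factor the coefficient of $y^{1-s}$ into local intertwining integrals, note that the archimedean and unramified factors assemble into the level-one scattering coefficient (bounded on $\Re(s)=1/2$ by the functional equation, as in your completed $\Lambda$-ratio argument), and check that the finitely many local factors at $p\mid N$ are $O_N(1)$ once normalized by the unramified local $L$-ratio so that the zeros of $1-\chi^2(p)p^{1-2s}$ on the critical line cancel --- the paper performs exactly this normalization together with an explicit estimate of $\Phi_s(\omega\left(\begin{smallmatrix}1&x\\0&1\end{smallmatrix}\right)\sigma_\mathfrak{b}^{-1})$, which is the one step your sketch leaves implicit (the unnormalized local integral at $p\mid N$ is \emph{not} uniformly bounded in $t$). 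Your alternative finish via unitarity of the scattering matrix on $\Re(s)=1/2$ is also a valid, and in fact shorter, proof of the lemma, though the paper does not take that route.
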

\begin{proof}
According to Corollary \ref{corofadelization},
\begin{equation}
(\frac{N}{c c'})^{s}E_\mathfrak{a}(z,s)=\sum\limits_{\Phi_s}a_{\Phi_s}\sum\limits_{\gamma\in B(\Q)\backslash \GL_2(\Q)}\Phi_s(\gamma g_z)
\end{equation}
for proper coefficients $a_{\Phi_s}$, and $\Phi_s$ runs over a basis of $V_E(N)$. Then
\begin{align}
(\frac{N}{c c'})^{s}E_\mathfrak{a}(\sigma_\mathfrak{b} z,s)=\sum\limits_{\Phi_s}a_{\Phi_s}\sum\limits_{\gamma\in B(\Q)\backslash \GL_2(\Q)}\Phi_s(\gamma g_z').
\end{align}
Here $g_z'=(\zxz{y}{x}{0}{1},\sigma_\mathfrak{b}^{-1}, \sigma_\mathfrak{b}^{-1}, \sigma_\mathfrak{b}^{-1}, \cdots)$, and we have used that each $\sum\limits_{\gamma\in B(\Q)\backslash \GL_2(\Q)}\Phi_s(\gamma g)$ is left $\GL_2(\Q)-$ invariant.

Using the Fourier expansion for adelic Eisenstein series as in
subsection \ref{subsecofAdelization}, and comparing it with (\ref{formulaofclassicalFourier2}), we have
\begin{equation}\label{formula5.1}
(\frac{N}{c c'})^{s}\varphi_\mathfrak{aa}(s)=\sum\limits_{\Phi_s}a_{\Phi_s}\int\limits_{\A}\Phi_{s}(\omega\zxz{1}{x}{0}{1}g_z')dx,
\end{equation}

\begin{equation}\label{formula5.2}
(\frac{N}{c c'})^{s}\varphi_\mathfrak{aa}(n,s)\kappa_{s-1/2}(ny)e^{2\pi i nx}=\sum\limits_{\Phi_s}a_{\Phi_s}W(\zxz{n}{0}{0}{1}g_z').
\end{equation}
Note that $(\frac{N}{c c'})^{s}$ and $a_{\Phi_s}$ are negligible for asymptotic behavior, and for all $p\nmid N$, the corresponding local integral in (\ref{formula5.1}) will be the same as in $N=1$ case. This is because $\Phi_s$ is unramified there and $\sigma_\mathfrak{b}^{-1}$ belongs to the local maximal compact subgroup. So to prove this Lemma, it would be sufficient to prove that the normalized local integrals at $p|N$ are bounded for $\Re(s)=1/2$.

From now on we will focus on the local calculations. We pick the basis for $V_E(N)$ slightly differently from the last section, so that the local component at $p$ for $\Phi_s$ is either spherical, or only supported at $B\zxz{1}{0}{p^i}{1}K_0(p^{e_p})$ for $0<i\leq e_p$. We further suppose that $\Phi_{s,p}$ is normalized so that $\Phi_{s,p}(\zxz{1}{0}{p^i}{1})=1$. When $p\nmid N$, $\Phi_{s,p}$ is locally spherical, and $\Re(s)$ large enough, we have
\begin{equation}\label{formula5.3unramified}
\int\limits_{\Q_p}\Phi_{s}(\omega\zxz{1}{x}{0}{1}\sigma_\mathfrak{b}^{-1})dx=\int\limits_{\Q_p}\Phi_{s}(\omega\zxz{1}{x}{0}{1})dx=\frac{1-\frac{1}{p^{2s}}}{1-\frac{1}{p^{2s-1}}}.
\end{equation}
The numerator is always bounded from above and below on $\Re(s)=1/2$. The denominator could be zero for certain $s$ values.

In general let $\sigma_\mathfrak{b}^{-1}=\zxz{d_\mathfrak{b}^{-1}}{0}{0}{1}\zxz{d}{-b}{-c}{a}$ with $ad-bc=1$ and $c|N$. Note that
\begin{equation}
\omega \zxz{1}{x}{0}{1}\zxz{d_\mathfrak{b}^{-1}}{0}{0}{1}=\zxz{1}{0}{0}{d_\mathfrak{b}^{-1}}\omega \zxz{1}{d_\mathfrak{b}x}{0}{1}.
\end{equation}

When $\Phi_{s,p}$ is spherical, 
\begin{equation}
\int\limits_{\Q_p}\Phi_{s}(\omega\zxz{1}{x}{0}{1}\sigma_\mathfrak{b}^{-1})dx=|d_\mathfrak{b}|^{s-1}\frac{1-\frac{1}{p^{2s}}}{1-\frac{1}{p^{2s-1}}},
\end{equation}
which differs from (\ref{formula5.3unramified}) by a bounded factor $|d_\mathfrak{b}|^{s-1}$ on $\Re(s)=1/2$.

When $\Phi_{s,p}$ is supported at $B\zxz{1}{0}{p^i}{1}K_0(p^{e_p})$ for $0<i\leq e_p$,
\begin{equation}
\int\limits_{\Q_p}\Phi_{s}(\omega\zxz{1}{x}{0}{1}\sigma_\mathfrak{b}^{-1})dx
=\chi(d_\mathfrak{b})|d_\mathfrak{b}|^{s-1}\int\limits_{\Q_p}\Phi_{s}(\zxz{-c}{a}{cx-d}{b-ax})dx.
\end{equation}
When $v(x)\leq -2e_p$, $\zxz{-c}{a}{cx-d}{b-ax}$ is not in the support if $v(c)\neq i$. Otherwise
\begin{equation}
\zxz{-c}{a}{cx-d}{b-ax}=\zxz{\frac{1}{b-ax}}{-a\frac{d-cx}{b-ax}p^{-i}}{0}{-(d-cx)p^{-i}}\zxz{1}{0}{p^i}{1}\zxz{1}{0}{0}{-\frac{b-ax}{d-cx}p^i}.
\end{equation}
Then be definition,
\begin{equation}
|\Phi_s(\zxz{-c}{a}{cx-d}{b-ax})|=|\chi^{-1}(-acx^2p^{-i})p^{2v(x)s}|\leq p^{v(x)}.
\end{equation}
An integral over $v(x)\leq -2e_p$ would then be bounded by
\begin{equation}
\frac{p^{-(2s-1)2e_p}(1-p^{-1})}{1-\frac{1}{p^{2s-1}}}.
\end{equation}
Note that the numerator is bounded on $\Re(s)=1/2$. The denominator will be zero at certain points on $\Re(s)=1/2$, but it will be cancelled after normalized by (\ref{formula5.3unramified}).

\noindent Now the part $v(x)>-2e_p$ has finite volume. It would then be enough to show that $\Phi_s(\zxz{-c}{a}{cx-d}{b-ax})$ is bounded on $v(x)>-2e_p$ and $\Re(s)=1/2$. 

The only possible singularity would come from when $v(cx-d)$ is very large and/or $v(b-ax)$ is very large. Note that if $v(cx-d)>0$, then $x=\frac{d}{c}+pu $ for $v(u)\geq 0$. Then $b-ax=b-\frac{ad}{c}-pau=-\frac{1}{c}-pau$ is of valuation $\leq 0$. This means 
$v(cx-d)$ and $v(b-ax)$ can't both be large. When $v(cx-d)$ is large, $\Phi_s(\zxz{-c}{a}{cx-d}{b-ax})$ is clearly bounded. On the other hand, if $v(b-ax)$ is large, $\zxz{-c}{a}{cx-d}{b-ax}$ won't even be supported on $B\zxz{1}{0}{p^i}{1}K_0(p^{e_p})$ for $0<i\leq e_p$.

\end{proof}

Let $\tau(n)$ be the function counting the divisors of $n$.
\begin{lem}\label{lemofboundFourierEis}
For $\phi=E_\mathfrak{a}(\sigma_\mathfrak{b}z,s)$ and $\Re(s)=1/2$, if we write 
\begin{equation}
\phi=\delta_{\mathfrak{ab}}y^s+\varphi_\mathfrak{ab}(s)y^{1-s}+\frac{1}{\xi(2s)}\sum\limits_{n\neq 0}\frac{\lambda_{\phi,s}(n)}{\sqrt{|n|}}\kappa_{s-1/2}(ny)e^{2\pi i nx},
\end{equation}
then 
\begin{equation}
|\lambda_{\phi,s}(n)|<<_{ N}\tau(n).
\end{equation}
\end{lem}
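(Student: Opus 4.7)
The plan is to extract the Fourier coefficient $\lambda_{\phi,s}(n)$ from the adelic Fourier expansion established in the previous subsection, and then bound the resulting global Whittaker function placewise. By Corollary \ref{corofadelization}, $(N/cc')^s E_\mathfrak{a}(\sigma_\mathfrak{b} z,s)$ is a finite linear combination $\sum_{\Phi_s} a_{\Phi_s} \mathrm{Eis}(\Phi_s)$, evaluated at the adelic element $g_z' = (\zxz{y}{x}{0}{1}, \sigma_\mathfrak{b}^{-1}, \sigma_\mathfrak{b}^{-1}, \ldots)$, where $\Phi_s$ ranges over a fixed finite basis of $V_E(N)$ depending only on $N$. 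Equation (\ref{formula5.2}) then identifies
\begin{equation*}
(N/cc')^s \varphi_\mathfrak{ab}(n,s) \kappa_{s-1/2}(ny) e^{2\pi i n x} = \sum_{\Phi_s} a_{\Phi_s}\, W_{\Phi_s}\!\left(\zxz{n}{0}{0}{1} g_z'\right),
\end{equation*}
where $W_{\Phi_s}$ is the global Whittaker functional attached to $\Phi_s$. Since the prefactor $(N/cc')^s$ is bounded on $\Re(s)=1/2$ and the $a_{\Phi_s}$ depend only on $N$, and since $|\xi(2s)| = O_N(1)$ on compact subsets but we ultimately want a bound independent of $t = \Im s$, we should in fact divide through by $\kappa_{s-1/2}(ny) e^{2\pi i n x}/\xi(2s)$ and reduce to bounding the product of local finite Whittaker values, using that the archimedean local Whittaker already contributes exactly $\kappa_{s-1/2}(ny) e^{2\pi i n x}/\xi(2s)$ by the normalization recorded in Section \ref{subsecofAdelization}.

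Next I factor the global Whittaker function as $W_{\Phi_s} = \prod_v W_{\Phi_s,v}$ and examine the finite local factors at $\zxz{n}{0}{0}{1} \sigma_\mathfrak{b}^{-1}$ prime by prime. At primes $p\nmid N$, $\Phi_{s,p}$ is spherical and $\sigma_\mathfrak{b}^{-1}$ lies in $\mathrm{GL}_2(\Z_p)$, so the value $W_{\Phi_s,p}(\zxz{n}{0}{0}{1})$ is given by the spherical formula (\ref{formulasphericalWhittaker}). On the line $\Re(s)=1/2$, this formula gives the elementary bound $|W_{\Phi_s,p}(\zxz{n}{0}{0}{1})| \le |n|_p^{1/2}(v_p(n)+1)$, whose product over $p\nmid N$ is bounded by $|n|^{-1/2}\tau(n)$, which combines with the $\sqrt{|n|}$ in the denominator of the definition of $\lambda_{\phi,s}(n)$ (see the formula displayed in the statement) to give the $\tau(n)$ shape.

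At each prime $p\mid N$, the local Whittaker integral $W_{\Phi_s,p}(\zxz{n}{0}{0}{1}\sigma_\mathfrak{b}^{-1}) = \int_{\Q_p} \Phi_{s,p}(\omega \zxz{1}{x}{0}{1}\zxz{n}{0}{0}{1}\sigma_\mathfrak{b}^{-1}) \psi_p(-x)\,dx$ must be bounded uniformly in $s$ with $\Re s=1/2$ and in $n$ (up to a factor depending only on $N$). I would handle this by the same device used in the proof of Lemma \ref{lemofasymofconstant}: split the range of $x$ into $v(x)\le -A_N$ (for $A_N$ large enough in terms of the conductor exponent $e_p$ of $\Phi_{s,p}$) and $v(x)>-A_N$. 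On the tail, the support condition on $\Phi_{s,p}$ (it is supported only on $B\zxz{1}{0}{p^i}{1}K_0(p^{e_p})$) together with the Iwasawa decomposition gives a geometric-series bound of shape $\sum_{v(x) \le -A_N} p^{(1-2\Re s)v(x)}$, which on $\Re s = 1/2$ is a finite sum of terms of modulus $1$; the $\psi_p$-oscillation moreover forces vanishing unless $v(x)$ takes one of $O_N(1)$ values. On the compact region $v(x)>-A_N$, the integrand is bounded (as in Lemma \ref{lemofasymofconstant}), and for all but $O_N(1)$ values of $v(n)$ the integral vanishes by the invariance properties of $\Phi_{s,p}$ under $K_0(p^{e_p})$, so the local factor contributes a bounded constant times $\tau(p^{v_p(n)})$.

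The main obstacle will be guaranteeing that the bound at ramified places is uniform as $|\Im s|\to\infty$: poles or near-zeros of the denominators $1 - p^{-(2s-1)}$ that appear in the standard formulas could in principle blow up the local factor, but exactly as in Lemma \ref{lemofasymofconstant} such denominators cancel against the global completed factor $1/\xi(2s)$ coming from the normalization of the archimedean Whittaker, so the final bound really is uniform. Combining the $\tau(n)$ bound from the unramified primes with the $O_N(\tau(p^{v_p(n)}))$ bound at each $p\mid N$ and multiplying by the $O_N(1)$ number of terms $\Phi_s$ in the linear combination yields $|\lambda_{\phi,s}(n)|\ll_N \tau(n)$, as claimed.
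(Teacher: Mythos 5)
Your overall strategy coincides with the paper's: adelize $E_\mathfrak{a}(\sigma_\mathfrak{b}z,s)$ via Corollary \ref{corofadelization}, identify the $n$-th Fourier coefficient with $\sum_{\Phi_s}a_{\Phi_s}W(\zxz{n}{0}{0}{1}g_z')$, strip off the archimedean factor $\kappa_{s-1/2}(ny)e^{2\pi inx}/\xi(2s)$ using the normalization of $W_\infty$, and bound the finite local Whittaker values place by place; your treatment of the primes $p\nmid N$ via (\ref{formulasphericalWhittaker}) is correct. However, at the primes $p\mid N$ your argument has a genuine gap. You claim that on the region $v(x)>-A_N$ the integral ``vanishes for all but $O_N(1)$ values of $v(n)$ by the invariance properties of $\Phi_{s,p}$,'' so that the local factor is $O_N(\tau(p^{v_p(n)}))$. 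This is false precisely in the case that matters most: when $\Phi_{s,p}$ lies in a principal series induced from a \emph{ramified} character $\chi$ of level $k$ (local level $c=2k$), and the Iwasawa decomposition places $\zxz{n}{0}{0}{1}\sigma_\mathfrak{b}^{-1}$ in the middle double coset $B\zxz{1}{0}{p^{k}}{1}K_0(p^{c})$, the function $W^{(k)}(\alpha)$ is supported on \emph{all} of $v(\alpha)\geq 0$ — see part (4) of Proposition \ref{propWiofram} and the remark preceding Lemma \ref{lemofgrowthofWmidlevel}, which states explicitly that boundedness of the support from above holds for supercuspidals but fails for induced representations. So for $n$ highly divisible by a prime of $N$ the local value does not vanish, and your argument gives no control over it.

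What is needed at these places is not a support statement but a decay estimate: Lemma \ref{lemofgrowthofWmidlevel} gives $W^{(k)}(a)\ll_k p^{(\alpha-1/2)v(a)}v(a)$, and for unitary Eisenstein series one may take $\alpha=0$, yielding exactly the required shape $p^{-n_p/2}\tau(p^{n_p})$. The proof of that lemma is not a soft invariance argument — it splits the $u$-integral into the ranges $-2k\leq v(u)\leq -k$ and $v(u)<-2k$ and uses that the twisting character $\nu$ must essentially equal $\mu_2^{-1}$ or $\mu_1^{-1}$ for a nonzero contribution, with the factor $v(a)$ arising from the unramified-quotient case. Away from the middle coset ($i\neq c/2$), part (3) of Proposition \ref{propWiofram} does pin the support to a single valuation, so your ``finitely many $v(n)$'' picture is salvageable there; but you must invoke Proposition \ref{propWiofram} and Lemma \ref{lemofgrowthofWmidlevel} (or reprove their content) to close the argument at $i=c/2$.
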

\begin{proof}
When $N=1$, 
\begin{equation}
|\lambda_{\phi,s}(n)|=|\sum\limits_{ab=n}(\frac{a}{b})^{s-1/2}|\leq\tau(n).
\end{equation}

So the claim is clear in this case. In general, by the proof of the last lemma, 
\begin{equation}
(\frac{N}{c c'})^{s}\frac{1}{\xi(2s)}\frac{\lambda_{\phi,s}(n)}{\sqrt{|n|}} \kappa_{s-1/2}(ny)e^{2\pi i nx}=\sum\limits_{\Phi_s}a_{\Phi_s}W(\zxz{n}{0}{0}{1}g_z').
\end{equation}
Recall that we normalized the local Whittaker functional such that
\begin{equation}
W_\infty(\zxz{a}{0}{0}{1}g_z)= \frac{1}{\xi(2s)} \kappa_{s-1/2}(ay)e^{2\pi i a x},
\end{equation}
and
\begin{equation}
W_p(1)=1
\end{equation}
for all finite prime $p$.

Write $n=\prod\limits_{p|n}p^{n_p}$. It would then be enough to show that 
\begin{equation}
W_p(\zxz{n}{0}{0}{1}\sigma_\mathfrak{b}^{-1})<<_{ N}p^{-\frac{1}{2}n_p}\tau(p^{n_p})
\end{equation}
for Whittaker functions of the local component of $\Phi_s$ which runs over a basis of $V_E(N)$. We will pick the basis as follows: the local component of $\Phi_s$ is either a new form, or a translate of new form by $\zxz{p^j}{0}{0}{1}$.
 
 For locally unramified representations, the local component of $\Phi_s$ is then a $\zxz{p^j}{0}{0}{1}$ translate of spherical element. 
 Let $W_{p,0}$ be the  Whittaker functional of the spherical element as given by (\ref{formulasphericalWhittaker}). 
 Then it's clear that
 \begin{equation}
  |W_{p,0}(\zxz{n}{0}{0}{1})|\leq p^{-\frac{1}{2} n_p}\tau(p^{n_p}).
 \end{equation}
By the Iwasawa decomposition, the translate by $\sigma_\mathfrak{b}^{-1}\zxz{p^j}{0}{0}{1}$ amount to a fixed shift in the valuation for $n_p$. So
\begin{equation}
  |W_{p,0}(\zxz{n}{0}{0}{1}\sigma_\mathfrak{b}^{-1}\zxz{p^j}{0}{0}{1})|<<_N p^{-\frac{1}{2} n_p}\tau(p^{n_p}).
\end{equation}

When $\Phi_s$ belongs to an induced representation from ramified Hecke characters, still let $W_{p,0}$ be the local Whittaker functional associated to the new form of the corresponding local representation. Again the 
Iwasawa decomposition (more precisely Lemma \ref{Iwasawadecomp}) implies that the translate by $\sigma_\mathfrak{b}^{-1}\zxz{p^j}{0}{0}{1}$  will give a fixed shift in $n$ locally and also decide which double $B-$ $K_0(p^{e_p})$ coset that
$\zxz{n}{0}{0}{1}\sigma_\mathfrak{b}^{-1}\zxz{p^j}{0}{0}{1}$ belongs to. This means we only care about $W_{p,0}^{(i)}$ for some fixed $i$. Then the Lemma follows from Proposition \ref{propWiofram} and Lemma \ref{lemofgrowthofWmidlevel}. (Note that we can pick $\alpha=0$ for unitary Eisenstein series in Lemma \ref{lemofgrowthofWmidlevel}.)

\end{proof}
\subsection{proof of Theorem \ref{thmofsecondineq}}
Theorem \ref{thmofsecondineq} turns out to be easier to generalize. We shall briefly follow the proof as in  \cite{Ho10} \cite{Ne11} \cite{PPA14}, then focus on the difference.

Let $\phi$ be either a Maass eigencuspform or an incomplete Eisenstein series of level $N$. Let $f$ be a  holomorphic newform of weight $k\in 2\N$ and level $q$, where $N|q$.
Let $Y\geq 1$ be a parameter to be chosen later, and let $h\in C_c^\infty(\R^+)$ be a compactly supported everywhere nonnegative test function whose Mellin transform is $\hat{h}$ and $\hat{h}(1)=\mu(1)$.
Let $h_Y$ be the function $y\mapsto h(Yy)$.

Apply $\mu_f$ to  (\ref{spectrumdecompofincompleteEis}) where the full level incomplete Eisenstein series is chosen to be $E(z,h_Y)$. We will then get

\begin{equation}
 Y\mu_f(\phi)=\mu_f(E(z,h_Y)\phi(z))-\frac{1}{2\pi i}\int\limits_{(1/2)}\hat{h}(s)Y^s\mu_f(E(z,s)\phi(z)ds.
\end{equation}
Same argument as in \cite{Ne11} implies that
\begin{equation}
 \frac{1}{2\pi i}\int\limits_{(1/2)}\hat{h}(s)Y^s\mu_f(E(z,s)\phi(z)ds<<_\phi Y^{1/2}\mu_f(1),
\end{equation}
as the only information about $\phi$ used is its rapid decay along cusps.
Then a standard unfolding technique gives
\begin{align}
 \mu_f(E(z,h_Y)\phi(z))&=\sum\limits_{\tau\in \Gamma_\infty\backslash\SL_2(\Z)/\Gamma_0(q)}\int\limits_{\tau^{-1}\Gamma_\infty\tau\cap \Gamma_0(q)\backslash \H} h_Y(\Im(\tau z))\phi(z)|f|^2(z)y^k\frac{dxdy}{y^2}\\
 &=\sum\limits_{\tau\in \Gamma_\infty\backslash\SL_2(\Z)/\Gamma_0(q)}\int\limits_{\Gamma_\infty\cap \tau\Gamma_0(q)\tau^{-1}\backslash \H} h_Y(\Im(z))\phi(\tau^{-1}z)|f|^2(\tau^{-1}z)\Im(\tau^{-1}z)^k\frac{dxdy}{y^2}\notag\\
 &=\sum\limits_{\mathfrak{a}\in \mathcal{C}}\int\limits_{y=0}^{\infty}\int\limits_{x=0}^{d_\mathfrak{a}} h_Y(\Im(z))\phi(\tau_\mathfrak{a}z)|f|^2(\tau_\mathfrak{a}z)\Im(\tau_\mathfrak{a}z)^k\frac{dxdy}{y^2}\notag\\
 &=\sum\limits_{\mathfrak{a}\in \mathcal{C}}\int\limits_{y=0}^{\infty}\int\limits_{x=0}^{1} h_Y(\Im(d_\mathfrak{a}z))\phi(\tau_\mathfrak{a}\zxz{d_\mathfrak{a}}{0}{0}{1}z)|f|^2(\sigma_\mathfrak{a}z)\Im(\sigma_\mathfrak{a}z)^k\frac{dxdy}{y^2}\notag
 \end{align}
Here $\mathfrak{a}$ is considered a cusp for $\Gamma_0(q)$. Let $d_\mathfrak{a}'$ and $\sigma_\mathfrak{a}'$ be the width and scaling matrix for $\mathfrak{a}$ when considered as a cusp for $\Gamma_0(N)$. Then $d_\mathfrak{a}'|d_\mathfrak{a}$. If $N=1$, $d_\mathfrak{a}'=1$, we have a single Fourier expansion of $\phi$ along cusps. But in general $d_\mathfrak{a}'$ may not be 1, and we can have several Fourier expansions along different cusps.
This is the difference between our case and previous papers.

Suppose we have the Fourier expansion
\begin{equation}
\phi(\sigma_\mathfrak{a}'z)=\sum\limits_{l\in\Z}\phi_{\mathfrak{a},l}(y)e^{2\pi i lx}.
\end{equation}
Let $\tilde{d_\mathfrak{a}}=d_\mathfrak{a}/d_\mathfrak{a}'$. Set
\begin{align*}
\mathcal{S}_0&=\sum\limits_{\mathfrak{a}\in \mathcal{C}}\int\limits_{y=0}^{\infty}h_Y(d_\mathfrak{a}y)\int\limits_{x=0}^{1} \phi_{\mathfrak{a},0}(\tilde{d_\mathfrak{a}}y)|f|^2(\sigma_\mathfrak{a}z)\Im(\sigma_\mathfrak{a}z)^k\frac{dxdy}{y^2},\\
\mathcal{S}_{0,Y^{1+\epsilon}}&=\sum\limits_{\mathfrak{a}\in \mathcal{C}}\int\limits_{y=0}^{\infty}h_Y(d_\mathfrak{a}y)\int\limits_{x=0}^{1} \sum\limits_{0<|l|<Y^{1+\epsilon}}\phi_{\mathfrak{a},l}(\tilde{d_\mathfrak{a}}y)|f|^2(\sigma_\mathfrak{a}z)\Im(\sigma_\mathfrak{a}z)^k e^{2\pi i l\tilde{d_\mathfrak{a}}x}\frac{dxdy}{y^2},\\
\mathcal{S}_{\geq Y^{1+\epsilon}}&=\sum\limits_{\mathfrak{a}\in \mathcal{C}}\int\limits_{y=0}^{\infty}h_Y(d_\mathfrak{a}y)\int\limits_{x=0}^{1} \sum\limits_{|l|>Y^{1+\epsilon}}\phi_{\mathfrak{a},l}(\tilde{d_\mathfrak{a}}y)|f|^2(\sigma_\mathfrak{a}z)\Im(\sigma_\mathfrak{a}z)^ke^{2\pi i l\tilde{d_\mathfrak{a}}x}\frac{dxdy}{y^2}.
\end{align*}
So
\begin{equation}
\mu_f(E(z,h_Y)\phi(z))=\mathcal{S}_0+\mathcal{S}_{0,Y^{1+\epsilon}}+\mathcal{S}_{\geq Y^{1+\epsilon}}.
\end{equation}
\begin{lem}(the main term)
$\mathcal{S}_0$ is $0$ when $\phi$ is Maass eigencuspform. If $\phi$ is incomplete Eisenstein series, 
\begin{equation}
\mathcal{S}_0=Y\mu_f(1)(\frac{\mu(\phi)}{\mu(1)}+O_\phi(\frac{1+R_f(qk)}{Y^{1/2}})),
\end{equation}
where 
\begin{equation}
R_f(x)=\frac{x^{-1/2}}{L(f,Ad,1)}\int\limits_{\R}|\frac{L(f,Ad,1/2+it)}{(1+|t|)^{10}}|dt
\end{equation}
is independent of $\phi$.
\end{lem}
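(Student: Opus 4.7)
\medskip
\noindent\emph{Proof plan.} If $\phi$ is a Maass eigencuspform, the Fourier expansion (\ref{MaassFourierexpan}) has no $n=0$ term at any cusp of $\Gamma_0(N)$; since each cusp of $\Gamma_0(q)$ is also a cusp of $\Gamma_0(N)$, the coefficient $\phi_{\mathfrak{a},0}$ vanishes identically for every $\mathfrak{a}\in\mathcal{C}(\Gamma_0(q))$, so $\mathcal{S}_0=0$ term by term.

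For an incomplete Eisenstein series $\phi=E_{\mathfrak{b}}(z,\Psi)$ of level $N$, apply (\ref{formulaEisconstantterm}) to split
$$\phi_{\mathfrak{a},0}(y) \;=\; \frac{\mu(\phi)}{\mu(1)} + \Phi_{\mathfrak{a}}(y),\qquad \Phi_{\mathfrak{a}}(y) \;=\; \frac{1}{2\pi i}\int\limits_{(1/2)}\hat{\Psi}(s)\bigl(\delta_{\mathfrak{b}\mathfrak{a}}y^{s}+\varphi_{\mathfrak{b}\mathfrak{a}}(s)y^{1-s}\bigr)\,ds,$$
and correspondingly write $\mathcal{S}_0 = (\mu(\phi)/\mu(1))\,M + \mathcal{E}$. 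Reversing the unfolding that produced the formula for $\mathcal{S}_0$ identifies $M$ with $\mu_f(E(z,h_Y))$. Applying Mellin inversion to $E(z,h_Y)$, shifting from $\Re(s)=2$ to $\Re(s)=1/2$ and extracting the residue at $s=1$ via Theorem~\ref{thmofpole}, the leading term of $M$ is $Y\mu_f(1)$ (using $\hat{h}_Y(1)=Y\hat{h}(1)=Y\mu(1)$), producing the main term $Y\mu_f(1)\mu(\phi)/\mu(1)$.

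The remaining pieces --- the Eisenstein integral in $M$ on $\Re(s)=1/2$ and the entire $\mathcal{E}$ --- all take the shape $\frac{1}{2\pi i}\int\limits_{(1/2)} w(s)\,\mu_f(E_{\mathfrak{b}}(z,s))\,ds$ for a rapidly decaying weight $w(s)$, once one interchanges the $s$-contour with the cusp sum and refolds. By Rankin-Selberg, $\mu_f(E_{\mathfrak{b}}(z,s))$ factors as a ratio of $\Gamma$-functions times $\zeta(2s)^{-1}L(s,f\otimes\bar{f})$; combined with the Rankin-Selberg identity $\mu_f(1)\asymp \Gamma(k)(4\pi)^{-k}L(f,Ad,1)$ and the bound $\varphi_{\mathfrak{b}\mathfrak{a}}(1/2+it)=O(1)$ from Lemma~\ref{lemofasymofconstant}, this reduces each piece to a quantity of size $Y^{1/2}\mu_f(1)L(f,Ad,1)^{-1}\int\limits_{\R}|L(f,Ad,1/2+it)|(1+|t|)^{-10}\,dt=Y^{1/2}\mu_f(1)R_f(qk)$; the additive ``$1$'' in the error absorbs the trivial $\delta_{\mathfrak{b}\mathfrak{a}}y^{s}$ piece that requires no $L$-function.

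The main obstacle is performing the level-$N$ Rankin-Selberg identification of $\mu_f(E_{\mathfrak{b}}(z,s))$ uniformly in the cusp $\mathfrak{b}$ and justifying the contour interchanges with integrable bounds: this is classical at $N=1$ but at general $N$ requires the adelization of Corollary~\ref{corofadelization} together with the local Rankin-Selberg estimate of Proposition~\ref{propRankinSelbergbound}, after which the weak subconvexity bound of \cite{So10} for $L(f,Ad,1/2+it)$ supplies the final quantitative estimate.
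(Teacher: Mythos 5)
Your plan follows essentially the same route as the paper: the paper's proof is a citation to Lemma 3.6 of \cite{Ne11}, whose only level-$N$-sensitive input is the estimate $\phi_{\mathfrak{a},0}(y)=\mu(\phi)/\mu(1)+O_\phi(Y^{-1/2})$ for $y$ in the support of $h_Y$, supplied via (\ref{formulaEisconstantterm}) and Lemma \ref{lemofasymofconstant}. You correctly identify that lemma (the uniform bound $\varphi_{\mathfrak{ab}}(1/2+it)=O(1)$) as the new ingredient, and your treatment of the main term $M$ via refolding, Mellin inversion and the residue at $s=1$ is exactly Nelson's.

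Two spots in your plan are off target, though neither is fatal. First, the error piece $\mathcal{E}$ does not cleanly refold into $\frac{1}{2\pi i}\int_{(1/2)}w(s)\,\mu_f(E_{\mathfrak{b}}(z,s))\,ds$: the factor $h_Y(d_{\mathfrak{a}}y)$ sits inside the cusp sum and obstructs recognizing a pure level-$N$ Eisenstein period. The correct (and simpler) move is to bound $\Phi_{\mathfrak{a}}(\tilde{d_{\mathfrak{a}}}y)=O_\phi(Y^{-1/2})$ pointwise on $\mathrm{supp}\,h_Y$ --- this is where Lemma \ref{lemofasymofconstant} and the rapid decay of $\hat{\Psi}$ enter --- and then pull this bound out to reduce $\mathcal{E}$ to $O_\phi(Y^{-1/2})\,\mu_f(E(z,h_Y))$. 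Second, the only Rankin--Selberg period that actually arises is $\mu_f(E(z,s))$ for the \emph{level-one} Eisenstein series coming from the spectral decomposition of $E(z,h_Y)$; its unfolding against the level-$q$ newform is classical, so neither Corollary \ref{corofadelization} nor Proposition \ref{propRankinSelbergbound} is needed here, and no subconvexity for $L(f,Ad,1/2+it)$ is invoked at this stage --- the lemma deliberately leaves the critical-line contribution packaged as $R_f(qk)$, to be estimated only later.
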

\begin{proof}
The first part is clear. For incomplete Eisenstein series $\phi$ of level 1, this is Lemma 3.6 of \cite{Ne11}. The argument there used that for $y\asymp 1/Y$, 
\begin{equation}\label{phi0y}
\phi_{\mathfrak{a},0}(y)=\mu(\phi)/\mu(1)+O_\phi(Y^{-1/2}).
\end{equation}
By (\ref{formulaEisconstantterm}) and (\ref{formula4.1}) we know that for $\phi=E_\mathfrak{a}(z,\Psi)$ for a compactly supported function $\Psi$ on $\R^*_+$,
\begin{equation}
\phi_{\mathfrak{a},0}(y)=\mu(\phi)/\mu(1)+\frac{1}{2\pi i}\int\limits_{(1/2)}\hat{\Psi}(s)(\delta_{\mathfrak{ab}}y^s+\varphi_\mathfrak{ab}(s)y^{1-s})ds.
\end{equation}
If $\phi$ is of level 1, (\ref{phi0y}) follows from that $\hat{\Psi}$ is rapidly decreasing and $\varphi_\mathfrak{ab}(s)=M(s)$ is always of norm 1 on $\Re(s)=1/2$. In general it follows from Lemma \ref{lemofasymofconstant}. The rest argument would be the same as in \cite{Ne11}. 
\end{proof}
\begin{lem}(Trivial error term)
\begin{equation}
 \mathcal{S}_{\geq Y^{1+\epsilon}}<<_{\phi,\epsilon}Y^{-10}\mu_f(1).
\end{equation}
\end{lem}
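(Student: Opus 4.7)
The plan is to exploit the rapid (indeed, exponential) decay of the Fourier coefficient functions $\phi_{\mathfrak{a},l}(y)$ in the variable $|l|y$ on the support of $h_Y(d_\mathfrak{a} y)$. Concretely, the compact support of $h$ gives $y \asymp 1/(d_\mathfrak{a} Y)$, hence $\tilde{d_\mathfrak{a}} y \asymp 1/(d_\mathfrak{a}' Y)$. Since $d_\mathfrak{a}'$ is bounded in terms of the fixed level $N$, the condition $|l| > Y^{1+\epsilon}$ forces $|l \tilde{d_\mathfrak{a}} y| \gg_N Y^{\epsilon}$, which is the regime where the Bessel-type kernels appearing in $\phi_{\mathfrak{a},l}$ are tiny.

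First I would bound $|\phi_{\mathfrak{a},l}(y)|$ pointwise. When $\phi$ is a Maass eigencuspform, (\ref{MaassFourierexpan}) gives $\phi_{\mathfrak{a},l}(y) = \frac{\lambda_{\phi,\mathfrak{a}}(l)}{\sqrt{|l|}}\kappa_{ir}(ly)$, and the standard exponential decay $\kappa_{ir}(u) \ll_{r,A} u^{-A}$ for $u \geq 1$ yields
\begin{equation*}
|\phi_{\mathfrak{a},l}(\tilde{d_\mathfrak{a}} y)| \ll_{\phi,A} \frac{|\lambda_{\phi,\mathfrak{a}}(l)|}{\sqrt{|l|}} (|l \tilde{d_\mathfrak{a}} y|)^{-A}.
\end{equation*}
When $\phi = E_\mathfrak{a}(\sigma_\mathfrak{b} z,\Psi)$ is an incomplete Eisenstein series, the same bound follows by combining the Mellin-integral expression for $\phi_{\mathfrak{a},l}(y)$ from Section \ref{subsecofspec}, the rapid decay $\hat{\Psi}(s) \ll_A (1+|s|)^{-A}$, the bound on $|\varphi_\mathfrak{ab}(l,s)|$ implicit in Lemma \ref{lemofboundFourierEis}, and the same decay of $\kappa_{s-1/2}$.

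Next I would sum over $l$: by Corollary \ref{corofboundFouriercuspidal} (in the cuspidal case) or Lemma \ref{lemofboundFourierEis} (in the Eisenstein case), we have $\sum_{|l|\leq M}|\lambda_{\phi,\mathfrak{a}}(l)|/\sqrt{|l|} \ll_{\phi,\epsilon} M^{1/2+\epsilon}$. A standard dyadic/summation by parts argument, together with the super-polynomial decay in $|l \tilde{d_\mathfrak{a}} y|$, then gives
\begin{equation*}
\sum_{|l|>Y^{1+\epsilon}} |\phi_{\mathfrak{a},l}(\tilde{d_\mathfrak{a}} y)| \ll_{\phi,\epsilon,A} Y^{-A}
\end{equation*}
uniformly for $y$ in the support of $h_Y(d_\mathfrak{a}\,\cdot)$, for any fixed $A$.

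Finally I would absorb the holomorphic form factor: the remaining integral
\begin{equation*}
\sum_{\mathfrak{a}\in \mathcal{C}}\int_0^\infty h_Y(d_\mathfrak{a} y)\int_0^1 |f|^2(\sigma_\mathfrak{a} z)\,\Im(\sigma_\mathfrak{a} z)^k\,\frac{dx\,dy}{y^2}
\end{equation*}
is at most $\|h\|_\infty \sum_{\mathfrak{a}} \int_{\Gamma_{\mathfrak{a}}\backslash\H}|f|^2(z) y^k \frac{dxdy}{y^2} \ll_h \mu_f(1)$, using that the number of cusps is finite and the unfolding already performed. Combining this with the $l$-sum bound and choosing $A=10+$something finishes the proof; the cost is only polynomial in $Y$ so the $Y^{-A}$ saving dominates. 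There is no real obstacle here — the only mild subtlety is that, unlike the level one case, we must perform the decay argument on each of several Fourier expansions along the different cusps of $\Gamma_0(N)$, but since $N$ is fixed and $\mathcal{C}(\Gamma_0(N))$ is finite, this only affects implicit constants.
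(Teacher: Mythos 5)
Your proposal is correct and follows essentially the same route as the paper, whose proof simply invokes the original argument of \cite{Ne11} together with the new coefficient bounds of Corollary \ref{corofboundFouriercuspidal} and Lemma \ref{lemofboundFourierEis}; you have spelled out the actual content of that argument (exponential decay of the Bessel-type kernels for $|l\tilde{d_\mathfrak{a}}y|\gg_N Y^{\epsilon}$ on the support of $h_Y$, summed against the coefficient bounds, with the finitely many cusps of $\Gamma_0(N)$ only affecting implied constants). The one minor imprecision is that the leftover unfolded integral is $\asymp_h Y\mu_f(1)$ rather than $\ll_h \mu_f(1)$, but as you note this polynomial loss is harmless once $A$ is taken large enough.
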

\begin{proof}
 The original proof in \cite{Ne11} made use of a bound for the sum of Fourier coefficients for $\phi$, which in our case follows directly from 
 Corollary \ref{corofboundFouriercuspidal} and Lemma \ref{lemofboundFourierEis}.
\end{proof}
Now we consider the main error term $\mathcal{S}_{0,Y^{1+\epsilon}}$.
Recall by (\ref{formulaofholoFourier}),
\begin{equation}
 f|_k\sigma_\mathfrak{a}(z)=y^{-k/2}\sum\limits_{n\in \N}\frac{\lambda_\mathfrak{a}(n)}{\sqrt{n}}\kappa_f(ny)e^{2\pi inx}
\end{equation}
for any cusp $\mathfrak{a}$ and $\kappa_f(y)=y^{k/2}e^{-2\pi y}$.
Note that $|f|^2(\sigma_\mathfrak{a}z)\Im(\sigma_\mathfrak{a}z)^k=|f|_k\sigma_\mathfrak{a}|^2(z)y^k$. Then 
\begin{equation}
|f|^2(\sigma_\mathfrak{a}z)\Im(\sigma_\mathfrak{a}z)^k=\sum\limits_{m,n\in \N}\frac{\lambda_\mathfrak{a}(n)\overline{\lambda_\mathfrak{a}(m)}}{\sqrt{nm}}\kappa_f(ny)\kappa_f(my)e^{2\pi i(n-m)x}.
\end{equation}
We first focus on the case when $\phi$ is a Maass eigencuspform, then we can write $\phi_{\mathfrak{a},l}$ more explicitly as
\begin{equation}
\phi_{\mathfrak{a},l}(y)=\frac{\lambda_{\phi,\mathfrak{a}}(l)}{\sqrt{l}}\kappa_{ir}(ly).
\end{equation}
Define
\begin{equation}
I_\phi(l,n,x)=(mn)^{-1/2}\int\limits_{0}^{\infty}h(xy)\kappa_{ir}(ly)\kappa_f(my)\kappa_f(ny)\frac{dy}{y^2}, m=n+l.
\end{equation}
Then
\begin{align}\label{formulaerrorterm5.1}
\mathcal{S}_{0,Y^{1+\epsilon}}&=\sum\limits_{\mathfrak{a}\in \mathcal{C}}\int\limits_{y=0}^{\infty}h_Y(d_\mathfrak{a}y)\int\limits_{x=0}^{1} \sum\limits_{0<|l|<Y^{1+\epsilon}}\phi_{\mathfrak{a},l}(\tilde{d_\mathfrak{a}}y)|f|^2(\sigma_\mathfrak{a}z)\Im(\sigma_\mathfrak{a}z)^ke^{2\pi i l\tilde{d_\mathfrak{a}}x}\frac{dxdy}{y^2}\\
&=\sum\limits_{\mathfrak{a}\in \mathcal{C}} \sum\limits_{0<|l|<Y^{1+\epsilon}}\frac{\lambda_{\phi,\mathfrak{a}}(l)}{\sqrt{l}}\sum\limits_{n\in\N,m=n+\tilde{d_\mathfrak{a}}l}\lambda_\mathfrak{a}(n)\overline{\lambda_\mathfrak{a}(m)}I_\phi(\tilde{d_\mathfrak{a}}l,n,d_\mathfrak{a}Y)\notag
\end{align}
For simplicity, let $d_c=\frac{[q,c_\mathfrak{a}^2]}{c_\mathfrak{a}^2}=d_\mathfrak{a}$ for $\mathfrak{a}\in\mathcal{C}[c]$.
When $N=1$, we will get
\begin{align}
|\mathcal{S}_{0,Y^{1+\epsilon}}|&=|\sum\limits_{\mathfrak{a}\in \mathcal{C}} \sum\limits_{0<|l|<Y^{1+\epsilon}}\frac{\lambda_{\phi}(l)}{\sqrt{l}}\sum\limits_{n\in\N,m=n+d_\mathfrak{a}l}\lambda_\mathfrak{a}(n)\overline{\lambda_\mathfrak{a}(m)}I_\phi(d_\mathfrak{a}l,n,d_\mathfrak{a}Y)|\\
 &=\sum\limits_{0<|l|<Y^{1+\epsilon}}|\sum\limits_{c|q}\frac{\lambda_{\phi}(l)}{\sqrt{l}}\sum\limits_{n\in\N,m=n+d_cl}I_\phi(d_cl,n,d_cY)\sum\limits_{\mathfrak{a}\in \mathcal{C}[c]} \lambda_\mathfrak{a}(n)\overline{\lambda_\mathfrak{a}(m)}|\notag\\
&\leq \sum\limits_{0<|l|<Y^{1+\epsilon}}\sum\limits_{c|q}\sharp \mathcal{C}[c]\frac{|\lambda_{\phi}(l)|}{\sqrt{l}}\sum\limits_{n\in\N,m=n+d_cl}|I_\phi(d_cl,n,d_cY)||\lambda_{[c]}(n)\lambda_{[c]}(m)|.\label{formulareduced}
 \end{align}
Here $\lambda_{[c]}(n)$ is as defined in (\ref{defofaveragelambda}), and the last inequality follows simply from Cauchy-Schwartz inequality.
Then it is proven in \cite{Ne11}, \cite{PPA14} that
\begin{equation}\label{5.1}
 I_\phi(l,n,x)<<_A \frac{\Gamma(k-1)}{(4\pi)^{k-1}}\max\{1,\frac{\max\{m,n\}}{xk}\}^{-A}
\end{equation}
for every $A>0$,
\begin{equation}\label{5.3}
 \sum\limits_{0<|l|<Y^{1+\epsilon}}\frac{|\lambda_{\phi}(l)|}{\sqrt{|l|}}<<_{\phi,\epsilon}Y^{1/2+2\epsilon},
\end{equation}
and a bound of shifted convolution sum

\begin{equation}\label{5.2}
 \sum\limits_{n\in\N,m=n+l,\max\{m,n\}\leq x}|\lambda_{[c]}(n)\lambda_{[c]}(m)|<<_\epsilon q^{\epsilon}_\Diamond \log \log(e^eq)^{O(1)}\frac{x\prod\limits_{p\leq x}(1+2|\lambda_f(p)|/p)}{\log(ex)^{2-\epsilon}}.
\end{equation}

Combining (\ref{5.1}) (\ref{5.2}) into (\ref{formulareduced}) and summing dyadically in terms of $\max\{m,n\}$, one can get a bound for $\sum\limits_{n\in\N,m=n+d_cl}|I_\phi(d_cl,n,d_cY)||\lambda_{[c]}(n)\lambda_{[c]}(m)|$, which is actually independent of $l$. Then applying (\ref{5.3}) and  Deligne bound $|\lambda_f(p)|\leq 2$, and taking $Y$ as in \cite{PPA14} will prove Theorem \ref{thmofsecondineq} for $N=1$ case.

In general for our case, the first issue is that $\lambda_{\phi,\mathfrak{a}}(l)$ could be different for different cusps. But there is no harm to be a little loose as there are only finitely many fixed cusps for $\Gamma_0(N)$. Denote
\begin{equation}
 \lambda_{\phi,+}(l)=\sum\limits_{\text{cusps for $\Gamma_0(N)$}}|\lambda_{\phi,\mathfrak{a}}(l)|.
\end{equation}
Then by Corollary \ref{corofboundFouriercuspidal}
\begin{equation}
 \sum\limits_{0<|l|<Y^{1+\epsilon}}\frac{|\lambda_{\phi,+}(l)|}{\sqrt{|l|}}<<_{\phi,\epsilon}Y^{1/2+2\epsilon},
\end{equation}
which is the analogue of (\ref{5.3}). Note that for fixed $c|q$, $d'_\mathfrak{a}$ will also be the same for all $\mathfrak{a}\in\mathcal{C}[c]$. Then
\begin{align}
 |\mathcal{S}_{0,Y^{1+\epsilon}}|&\leq \sum\limits_{0<|l|<Y^{1+\epsilon}}\sum\limits_{c|q}\frac{|\lambda_{\phi,+}(l)|}{\sqrt{|l|}}\sum\limits_{n\in\N,m=n+\tilde{d_\mathfrak{a}}l}|I_\phi(\tilde{d_c}l,n,d_cY)|\sum\limits_{\mathfrak{a}\in \mathcal{C}[c]} |\lambda_\mathfrak{a}(n)\overline{\lambda_\mathfrak{a}(m)}|\\
 &\leq \sum\limits_{0<|l|<Y^{1+\epsilon}}\sum\limits_{c|q}\sharp\mathcal{C}[c]\frac{|\lambda_{\phi,+}(l)|}{\sqrt{|l|}}\sum\limits_{n\in\N,m=n+\tilde{d_c}l}|I_\phi(\tilde{d_c}l,n,d_cY) \lambda_{[c]}(n)\lambda_{[c]}(m)|\notag.
\end{align}
Note that the inner sum in $n$ differs from (\ref{formulareduced}) only by $l$ part, and we already know that we can get an upper bound which is independent of $l$.
Then one can argue similarly from this point on to prove Theorem \ref{thmofsecondineq} as in \cite{PPA14}.
We will not give further details. The main point here is a control for Fourier coefficients of Maass eigencuspform of level $N$ as in Corollary \ref{corofboundFouriercuspidal}.

When $\phi$ is a incomplete Eisenstein series, one can decompose it into residue spectrum and continuous spectrums as in (\ref{spectrumdecompofincompleteEis}), and proceed as in the Maass eigencuspform case. The main point will again be a control of Fourier coefficients for Eisenstein series of level $N$, which follows directly from Lemma \ref{lemofboundFourierEis}.


\end{document}